\DeclareFontFamily{OMX}{MnSymbolE}{}
\DeclareSymbolFont{MnLargeSymbols}{OMX}{MnSymbolE}{m}{n}
\DeclareFontShape{OMX}{MnSymbolE}{m}{n}{
    <-6>  MnSymbolE5
   <6-7>  MnSymbolE6
   <7-8>  MnSymbolE7
   <8-9>  MnSymbolE8
   <9-10> MnSymbolE9
  <10-12> MnSymbolE10
  <12->   MnSymbolE12
}{}
\DeclareFontShape{OMX}{MnSymbolE}{b}{n}{
    <-6>  MnSymbolE-Bold5
   <6-7>  MnSymbolE-Bold6
   <7-8>  MnSymbolE-Bold7
   <8-9>  MnSymbolE-Bold8
   <9-10> MnSymbolE-Bold9
  <10-12> MnSymbolE-Bold10
  <12->   MnSymbolE-Bold12
}{}
\let\llangle\@undefined
\let\rrangle\@undefined
\DeclareMathDelimiter{\llangle}{\mathopen}%
                     {MnLargeSymbols}{'164}{MnLargeSymbols}{'164}
\DeclareMathDelimiter{\rrangle}{\mathclose}%
                     {MnLargeSymbols}{'171}{MnLargeSymbols}{'171}
\def\Xint#1{\mathchoice
{\XXint\displaystyle\textstyle{#1}}%
{\XXint\textstyle\scriptstyle{#1}}%
{\XXint\scriptstyle\scriptscriptstyle{#1}}%
{\XXint\scriptscriptstyle\scriptscriptstyle{#1}}%
\!\int}
\def\XXint#1#2#3{{\setbox0=\hbox{$#1{#2#3}{\int}$ }
\vcenter{\hbox{$#2#3$ }}\kern-.6\wd0}}
\def\dashint{\Xint-}
\newtheorem{theorem}{Theorem}[section]
\newtheorem{lemma}[theorem]{Lemma}
\newtheorem{proposition}[theorem]{Proposition}
\newtheorem{corollary}[theorem]{Corollary}
\newtheorem*{corollary*}{Corollary}
\newtheorem{atheorem}{Theorem}
\theoremstyle{definition}
\newtheorem{definition}[theorem]{Definition}
\theoremstyle{remark}
\newtheorem{example}[theorem]{Example}
\newtheorem{remark}[theorem]{Remark}
\newcommand{\subref}[2]{\hyperref[#2]{\ref{#1}.\ref{#2}}}
\numberwithin{equation}{section}
\crefname{equation}{}{}
\newcommand{\sq}[1]{\widetilde{#1}}
\newcommand{\R}{\mathbb{R}}
\newcommand{\N}{\mathbb{N}}
\newcommand{\mc}[1]{\mathcal{#1}}
\newcommand{\fl}[1]{\lfloor #1 \rfloor}
\newcommand{\ang}[1]{\langle #1 \rangle}
\newcommand{\dang}[1]{\llangle #1 \rrangle}
\newcommand{\D}{\mathcal{D}}
\newcommand{\dd}{\mathrm{dist}_\D}
\newcommand{\1}{\mathbf{1}}
\newcommand{\LL}{\langle}
\newcommand{\RR}{\rangle}
\newcommand{\BMO}{\mathrm{BMO}}
\newcommand{\bv}[1]{\bm{|} #1 \bm{|}}
\title{Matrix weighted $L^p$ estimates in the nonhomogeneous setting}
\author{Fernando Benito-de la Cigoña, Tainara Borges, Francesco D'Emilio, Marcus Pasquariello, 
 Nathan A. Wagner}
\begin{document}
\begin{abstract}
We 
establish a modified pointwise convex body domination for vector-valued Haar shifts in the nonhomogeneous setting, strengthening and extending the scalar case developed in \cite{CPW}. Moreover, we identify a subclass of shifts, called $L^1$-normalized, for which the standard convex body domination holds without requiring any regularity assumption on the measure. Finally, we extend the best-known matrix weighted $L^p$ estimates for sparse forms to the nonhomogeneous setting. The key difficulty here is the lack of a reverse-Hölder inequality for scalar weights, which was used in \cite{cruzuribe2017} to establish $L^p$ matrix weighted estimates and only works in the doubling setting. 
Our approach relies instead on a generalization of the weighted Carleson embedding theorem which allows to control not only a fixed weight, but also collections of weights localized on different dyadic cubes that satisfy a certain compatibility condition.
\end{abstract}

\thanks{\textbf{Funding Acknowledgment:} Francesco D'Emilio was partially supported by National Science Foundation DMS-2054863 and by the Agencia Estatal de Investigación (Spain), grant number CNS2022-135431. Nathan A. Wagner was supported by a National Science Foundation MSPRF DMS-2203272. }
\maketitle

\section{Introduction and main results}
In this paper, we seek to further understand the role of dyadic harmonic analysis in the context of nonhomogeneous measures in $\R^n$. A measure $\mu$ in $\R^n$ is said to be doubling if $$\mu(B(x,2r))\leq C\mu(B(x,r)), \quad \text{ for all $r>0$ and $x\in \R^n$.} $$ In the doubling setting, two different perspectives to understand Calderón-Zygmund operators through the lenses of dyadic analysis have been developed in the last two decades. The first approach relies on representation theorems, which assert that Calderón-Zygmund operators can be weakly represented as probabilistic averages of dyadic shifts and paraproducts. The second approach is based on sparse domination, which uses a self-similarity argument to provide pointwise domination of these operators by a simpler class of positive dyadic operators known as sparse operators. Both approaches have independent interest: the former provides a modern proof to $T(1)$ theorems and allowed to obtain sharp weighted inequalities through a complex reduction to dyadic shifts and paraproducts, see \cite{Petermichl, PetermichlVolberg, HytonenAnnals}. On the other hand, sparse domination has greatly simplified the proof of weighted inequalities, leveraging the relatively elementary nature of sparse operators \cite{Lerner2013, Moen}.

In the nonhomogeneous setting, i.e, when one does not assume the doubling condition on the measure, Calder\'on-Zygmund theory was developed in $\R^n$ as long as the measure satisfied a power growth condition \cite{NTV97,NTV98, NTV2003, TolsaRBMO,TolsaT1}
$$\mu(B(x,r)) \lesssim_\mu r^d, \quad \text{ for all } x \in \R^n, \ r>0, \text{ for some } 0<d \leq n.$$ 
While both approaches—representation and sparse domination—have proven to be extremely 
powerful in the doubling setting, they encounter significant limitations in the nonhomogeneous context. In particular, when applied to weighted inequalities, the representation theorem of \cite{HH} gives rise to certain dyadic operators that are difficult to manage. Similarly, existing sparse domination techniques fail to recover the sharp $A_2$ theorem or even the correct weight class, highlighting gaps that warrant further investigation \cite{Conde-AlonsoParcet, VolbergZorin-Kranich, Tolsa2007}.

Back to the heart of the matter, in \cite{LSMP} the authors introduced sufficient conditions on a general measure $\mu$ to get bounds for scalar valued Haar shifts defined with respect to some fixed dyadic grid $\D$ (see Definition \ref{def: vectorhaarshift} with $d=1$ for the definition of a scalar Haar shift). In the real line, they characterized the class of measures for which every scalar Haar shift is weak type $(1,1)$ (consequently such condition was also enough for any scalar Haar shift to be bounded in $L^{p}(\mu)$, $1<p<\infty$). Such measures, later termed \emph{balanced}, are the ones satisfying that for every dyadic interval $I \in \D$,
\begin{equation}\label{balanced}
    m(I) \sim m(\widehat{I}), \quad \text{for} \quad m(I):= \frac{\mu(I_+)\mu(I_-)}{\mu(I)},
\end{equation}
where $\widehat{I}$ is the dyadic father of $I$ in $\D$.
Interestingly, there seems to be no direct relationship between the power growth condition and the balanced condition. This observation reinforces a broader theme: an apparent discrepancy between continuous and dyadic analysis in the nonhomogeneous setting, which is also reflected in the limitations of current representation and sparse domination techniques.

A renewed interest in the topic arose after the work of J. Conde Alonso, J. Pipher, and the last author \cite{CPW}, where it was shown that even in the context of balanced measures, sparse domination in the usual sense fails for the dyadic Hilbert transform and more general dyadic shifts. To prove sparse domination for such Haar shifts, they had to consider more general sparse forms that encompassed the complexity of the dyadic shift of interest, and which were still good enough to deduce weighted corollaries for the right class of adapted $A_p$ weights.

Our first result improves the state of the art in several ways: we properly extend to higher dimensions $n\geq 1$ the balanced condition for an arbitrary measure $\mu$ and standard Haar systems $\mathscr{H}$ in $\R^n$, which is the right condition to have continuity in $L^{p}(\mu)$ for any scalar Haar shift of arbitrary complexity. For such balanced pairs (see \cref{def of balanced pairs}) we will upgrade the modified sparse domination to a pointwise sparse domination, which is stronger than the dual form in which it originally appeared in \cite{CPW}, and we will prove that in the more general context of vector Haar shifts. Moreover, we determine a class of shifts for which the classical sparse domination holds without any assumption on the measure. In doing so, we extend to the nonhomogeneous setting the application of convex body domination, which is the natural analog of scalar sparse domination in the vector-valued setting and was introduced in \cite{NPTV} to prove matrix weighted inequalities. \par Given a vector valued function $f: \R^n \to \R^d$ which is $\mu$-integrable on a cube $Q$ in $\R^d$, the convex body average $\dang{f}_Q$ is the compact, convex and symmetric set defined as the image of the unit ball of $L^\infty(Q)$ under the bounded linear functional defined by the pairing with $f$
\[\dang{f}_Q := \{ \LL f \psi \RR_Q, \ \psi: Q \to \R, \|\psi \|_\infty \leq 1 \},\]
 where 
 \[ \LL f \psi \RR_Q := \frac{1}{\mu(Q)} \int_Q f(x) \psi(x) d\mu(x),\] 
is the vector whose $i$-th component is $\langle f_i \psi \rangle_Q$, for $i=1, \dots, d.$ If $T$ is a linear operator acting on scalar valued functions and $f: \R^n \to \R^d$, we abuse notation writing $Tf$ instead of $(T \otimes I_d)(f)$, where $(T \otimes I_d)(f)=(Tf_1, \dots, Tf_d)$. In the rest of the paper, for $1<p<\infty$ the notation $L^p(\R^n, \R^d)$ or simply $L^p$ will generally refer to the usual $L^p$-space of (vector valued) functions with respect to an arbitrary measure $\mu$, unless otherwise specified. We are now ready to state our first main result.

\begin{atheorem}\label{thm: sparseforvectorhaar}
Let $\mu$ a Radon measure in $\R^n$ and $\mathscr{H}$ a generalized Haar system such that the pair $(\mu, \mathscr{H})$ is balanced. Let $f \in L^1(\R^n;\R^d)$ be compactly supported in $Q_0 \in \D$, and $T$ be a vector valued Haar shift of complexity $(s,t)$ as in Definition \ref{def: vectorhaarshift}. There exists a sparse family $\mathcal{S}= \mathcal{S}(f)\subset \mathcal{D}(Q_0)$ and a positive constant $C=C(n,d,s,t,T)$, depending exponentially on the complexity, such that  
\[Tf(x) \in C \sum_{Q \in \mc{S}}\dang{f}_Q \1_{Q}(x) + C\sum_{\substack{ J, K \in \mc{S} \\ \text{dist}(J, K) \leq s+t + 2 }} \dang{f}_{J} \frac{\1_K(x)}{\mu(K)}\sqrt{m(J)}\sqrt{m(K)}  \quad \text{ on } Q_0.\]   

Moreover, for general Radon measures $\mu$ and Haar systems $\mathscr{H}$, if $T$ is $L^1$ normalized as in \cref{D: L^1 normalized}, we obtain the usual convex body domination. Namely, for each $f \in L^1(\R^n;\R^d)$ compactly supported in $Q_0 \in \D$, there exists a sparse family $\mathcal{S}= \mathcal{S}(f)\subset \mathcal{D}(Q_0)$ and a positive constant $C=C(n,d,s,t,T)$ depending linearly on the complexity such that

\[Tf(x) \in C\sum_{Q \in \mc{S}}\dang{f}_Q \1_{Q}(x) \quad \text{ on } Q_0. \]
 
\end{atheorem}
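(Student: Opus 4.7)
The plan is to extend to the vector-valued setting the scalar sparse domination of \cite{CPW} by working with convex body averages in the spirit of \cite{NPTV}. I would first build the sparse family $\mc{S}$ by a John-ellipsoid stopping time: starting from $Q_0\in\mc{S}$, a descendant $Q$ of $P\in\mc{S}$ is added to $\mc{S}$ if $Q$ is maximal with the property that the John ellipsoid of $\dang{f}_Q$ fails to be contained in $K$ times the John ellipsoid of $\dang{f}_P$, for $K=K(d)$ sufficiently large. The volume-comparison argument of \cite{NPTV} then yields sparseness of $\mc{S}$. Writing $\pi_\mc{S}(R)$ for the minimal stopping cube containing a dyadic cube $R$, the stopping rule guarantees that every vector $\LL f,h_J\RR/\mu(J)$ with $\pi_\mc{S}(J)=Q$ lies in a bounded dilate of $\dang{f}_Q$, with constant depending only on $d$.

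Next, I would decompose $Tf$ according to the stopping tree. Expanding $T$ via Definition \ref{def: vectorhaarshift}, $Tf$ is a sum over dyadic ancestors $R$ and pairs $(I,J)$ with $I^{(s)}=J^{(t)}=R$, with coefficients $\alpha_{IJR}\LL f,h_J\RR h_I(x)$. Splitting each such pair according to whether $\pi_\mc{S}(I)=\pi_\mc{S}(J)$ or not produces a diagonal and an off-diagonal piece. For the diagonal piece with $\pi_\mc{S}(I)=\pi_\mc{S}(J)=Q$, the nonhomogeneous Haar normalization combined with the containment above allows me to bound $\alpha_{IJR}\LL f,h_J\RR h_I(x)$ pointwise on $Q$ by a representative of $C\dang{f}_Q$, and Minkowski-summing in $(I,J)\subset Q$ collapses back into $\dang{f}_Q$ by convexity and central symmetry of convex body averages. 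This yields the diagonal term $C\sum_{Q\in\mc{S}}\dang{f}_Q\1_Q(x)$.

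The principal obstacle is the off-diagonal piece, where $\pi_\mc{S}(I)=K\neq Q=\pi_\mc{S}(J)$. The complexity condition $I^{(s)}=J^{(t)}$ forces $\mathrm{dist}(J,K)\leq s+t+2$ in $\D$. In the nonhomogeneous Haar system, $\|h_I\|_{L^\infty}\sim \sqrt{m(I)}/\mu(I)$ and $\|h_J\|_{L^1}\sim\sqrt{m(J)}$, so testing $\LL f,h_J\RR$ inside a dilate of $\dang{f}_J$ leaves a pointwise factor of order $\sqrt{m(I)m(J)}/\mu(K)$ on $K$. The balanced condition $m(R)\sim m(\widehat R)$ then permits replacing $m(I)$ by $m(K)$ by iterating along the finite chain of $I$'s dyadic ancestors up to $K$, at the price of a constant depending exponentially on $s+t$. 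Summing over admissible inner cubes $I\subset K$ and $J\subset Q$, using the sparse-packing estimate to keep the $J$-sums inside a single $\dang{f}_J$, produces the stated off-diagonal term $C\,\dang{f}_J \frac{\1_K(x)}{\mu(K)}\sqrt{m(J)}\sqrt{m(K)}$. The delicate point is that, contrary to the scalar case, the cancellation involves sums of vectors inside convex sets, so central symmetry of $\dang{f}_J$ must be used to turn inequalities into set inclusions at each step of the chaining.

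Finally, for the $L^1$-normalized case in \cref{D: L^1 normalized}, the kernel coefficients $\alpha_{IJR}$ carry a strengthened normalization that removes the $\sqrt{m(I)m(J)}$ factors from the pointwise kernel bound. Consequently, no balanced chaining is needed: the off-diagonal contribution is itself absorbed by diagonal sparse terms, and the classical convex body domination $Tf(x)\in C\sum_{Q\in\mc{S}}\dang{f}_Q\1_Q(x)$ holds for arbitrary Radon $\mu$. Since no chain of length up to $s+t$ needs to be telescoped, the resulting constant depends linearly on the complexity rather than exponentially, as asserted.
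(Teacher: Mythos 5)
Your high-level architecture (John-ellipsoid stopping time, diagonal/off-diagonal split, balanced chaining, and removing the chaining in the $L^1$-normalized case) resembles the paper's strategy, but there are two genuine gaps that make the core estimate fail as written.

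First, your stopping time uses only a condition of \emph{maximal-function} type (the ellipsoid of $\dang{f}_Q$ escaping a dilate of $\dang{f}_P$). The paper's construction includes, in addition, the condition that the \emph{partial sums of the shift itself} escape a dilate of $\dang{f}_{P}$ on the selected cube (condition \eqref{E: stop1}). This second condition is indispensable. The sparseness of the family selected by that condition does not follow from a volume argument on John ellipsoids; it follows from the scalar weak $(1,1)$ bound for Haar shifts applied componentwise via \cref{lem:convex_body_components} (see \cref{lem: vectorweak11}). Without it, you have no control on the local truncations of $T$ on the ``good'' set, which is exactly what the diagonal piece consists of.

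Second, and relatedly, the step ``Minkowski-summing in $(I,J)\subset Q$ collapses back into $\dang{f}_Q$ by convexity and central symmetry'' is incorrect. If $x_1,\dots,x_M\in C\,\dang{f}_Q$, then $\sum_i x_i\in MC\,\dang{f}_Q$; convexity and symmetry give no collapse. Within a fixed stopping cube there are infinitely many admissible pairs $(I,J)$, so this sum does not remain in any bounded dilate of $\dang{f}_Q$. This is precisely the issue that the shift stopping condition \eqref{E: stop1} resolves: on the unselected region the entire truncated sum $\sum_{Q\supsetneq J}T_Q(f\1_{Q'})$ lies in $C\dang{f}_{Q'}$, not merely its individual summands. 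The paper never sums the $(I,J)$ contributions termwise inside a stopping cube; instead it writes the local operator as $S_1+S_2+S_3$ plus recursive copies, where $S_1,S_2$ are controlled by \eqref{E: stop1} and $S_3$ (the single boundary term per stopping cube) is what generates the off-diagonal sparse term via \eqref{E: coefficientcontrolbym(Q)} and the balanced condition.

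A smaller point: in the $L^1$-normalized case, the reason the argument works for arbitrary Radon $\mu$ is not only that the chaining disappears, but that the weak $(1,1)$ bound required to run the stopping time at all (\cref{T:weak (1,1) for L^1 norm}) holds without the balanced hypothesis, proved via a nonhomogeneous Calder\'on--Zygmund decomposition. The paper also replaces the vector stopping condition \eqref{E: stop2} by the componentwise version \eqref{E: new stop 2}, which is needed to control the $S_3$ term via \cref{lem:convex_body_components}; your sketch does not address this.
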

It is important to note that, in the nonhomogeneous setting, the assumption $x \in Q_0$ can not be extended to hold for almost every $x \in \R^n$, as is the case in the doubling setting (see also \cite{Lacey_2017} where a similar restriction is imposed on sparse domination for martingale transforms with general measures). However, the resulting conclusion remains sufficient to establish weighted estimates, as we shall see.\\

In what follows, we always assume $W$ is a $d \times d$ matrix and a function $f$ takes values in $\R^d$, even if it is not explicitly included in the notation. There has been persistent interest in matrix weighted inequalities since the matrix $A_2$ condition was introduced in \cite{TreilVolberg} to characterize $L^2(W)$ bounds for the vector-valued Hilbert transform. This problem was initially motivated by the study of random multivariate stationary processes. Let $\mathcal{M}_{d \times d}^{+}$ denote the space of $d \times d$ positive semi-definite matrices with real entries. A matrix weight $W$, that is a locally integrable function $W: \R^n \rightarrow \mathcal{M}_{d \times d}^{+}$, satisfies the matrix $A_2$ condition if

\begin{equation}
[W]_{A_2}:= \sup_{Q}\, \bm{|} \langle W \rangle_{Q}^{1/2} \, \langle W^{-1} \rangle_Q^{1/2} \bm{|}^2< \infty.   
\end{equation}

The theory of matrix weighted inequalities was later expanded to include $L^p(W)$ estimates for exponents $p \neq 2$, as well as matrix weighted estimates for general Calder\'{o}n-Zygmund operators, weighted estimates for maximal operators, two weight bump estimates, and other innovations \cite{ChristGoldberg, cruzuribe2017, Volberg97}. The significant interest in sharp weighted norm inequalities in the scalar setting has naturally led to the consideration of the analogous problem in the matrix case. After several partial improvements (see, for example \cite{BickelPetermichlWick}), the dependence of the $L^2(W)$ operator norm on the weight characteristic was shown to be no worse than $[W]_{A_2}^{3/2}$ in \cite{NPTV}, using convex body domination. These results led to the formulation of the matrix $A_2$ conjecture, which states that the operator norm of the Hilbert transform on $L^2(W)$ should depend linearly in $[W]_{A_2};$ as in the scalar case. Quite surprisingly, the matrix $A_2$ conjecture was shown to be false for the Hilbert transform in the recent paper \cite{DPTV}: the exponent $3/2$ is actually sharp. Also, the question of the optimal weight dependence for $p \neq 2$ remains open in the homogeneous setting, even in the specific case of the Hilbert transform. To date, the best known $L^p(W)$ norm estimate for Calder\'{o}n-Zygmund operators was given in \cite{cruzuribe2017}, where the authors used two weight Orlicz bump technology to deduce one weight corollaries. The following bound is proven in that paper
    \[\|T\|_{L^p(W) \to L^p(W)} \lesssim [W]_{A_p}^{1 + \frac{1}{p-1} - \frac{1}{p}},\]
where $T$ is a classical Calder\'on-Zygmund operator. We emphasize that all of these mentioned contributions were in the \emph{homogeneous setting} (typically presented with Lebesgue measure).

In the nonhomogeneous setting, there has been less activity concerning matrix weighted inequalities. However, we mention the work of Treil in this area \cite{Treil2023}, specifically in estimates obtained for a vector square function with matrix weights where the underlying measure does not satisfy any doubling condition. See also \cite{CuliucTreil} for a version of the Carleson embedding theorem in the matrix, nonhomogeneous setting.

Our second main result recovers the best known dependence on $[W]_{A_p}$ in the doubling case, does not make use of Orlicz bump functions, and works much more generally for any Radon measure $\mu$ for which we have a certain bilinear sparse form domination of an operator acting on vector-valued functions. This dependence is sharp in the case $p=2.$ We mention that an additional advantage of the weighted estimate is that, a priori, it does not make any reference to technical constructions involving convex bodies (it is purely an estimate on a very natural bilinear form in the vector valued case). 

\begin{atheorem} \label{MatrixWeightedThm}
Let $\mu$ be a Radon measure on $\R^n$ and let $\mc{S}$ be a $\eta$-sparse family of cubes. Consider the sparse form 
    \[\mc{A}_{\mc{S}}(f, g) := \sum_{Q \in \mc{S}}\dashint_{Q}\dashint_{Q}|f(x) \cdot g(y)| \, d\mu(x) \, d\mu(y)\mu(Q).\]
    Then for $1 < p < \infty$, if $W \in A_{p, \infty}^{sc}$, $V \in A_{p', \infty}^{sc}$ and $[W, V]_{A_p} < \infty$, we have
    \[\mc{A}_{\mc{S}}(V^{1/p'}f, W^{1/p}g) \lesssim_{d, \eta} [W]_{A_{p, \infty}^{sc}}^{1/p'} [W, V]_{A_p}^{1/p}[V]_{A_{p', \infty}^{sc}}^{1/p}\|f\|_{L^p}\|g\|_{L^{p'}}.\]
    In particular, in the one weight case where $V = W^{-p'/p}$, if $W \in A_p$, we have
    \[\mc{A}_{\mc{S}}(W^{-1/p}f, W^{1/p}g) \lesssim_{d, \eta} [W]_{A_p}^{1 + \frac{1}{p-1} - \frac{1}{p}}\|f\|_{L^p}\|g\|_{L^{p'}}.\]
    As a corollary, for any operator $T$ that admits vector dual sparse domination, 
    \[|\ang{Tf, g}| \lesssim \mc{A}_{\mc{S}}(f, g),\]
    if $1 < p < \infty$ and $W \in A_p$, we have 
    \[ \|T\|_{L^p(W) \to L^p(W)} \lesssim_{p,d} [W]_{A_p}^{1 + \frac{1}{p-1} - \frac{1}{p}}.\]
\end{atheorem}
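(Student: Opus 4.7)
The plan is to prove the two--weight sparse form bound directly; the one--weight corollary with $V=W^{-p'/p}$ then follows from the standard comparisons $[W]_{A_{p,\infty}^{sc}}\lesssim[W]_{A_p}$ and $[W^{-p'/p}]_{A_{p',\infty}^{sc}}\lesssim[W]_{A_p}^{1/(p-1)}$, and the $L^p(W)$ operator estimate is a routine duality consequence of the sparse form bound once one writes $\|Tf\|_{L^p(W)} = \sup_{\|g\|_{L^{p'}(W)}=1}|\langle W^{1/p}Tf, W^{1/p}g\rangle|$.

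The first maneuver is to move both weights onto a single side of the dot product and bound pointwise
\[
|V^{1/p'}(x)f(x)\cdot W^{1/p}(y)g(y)|\;\leq\;\|V^{1/p'}(x)W^{1/p}(y)\|\,|f(x)|\,|g(y)|.
\]
For each $Q\in\mc S$ I would introduce reducing matrices $\mc V_Q,\mc W_Q\in\mathcal M_{d\times d}^+$ adapted respectively to the $L^{p'}$ average of $V^{1/p'}$ and the $L^p$ average of $W^{1/p}$, so that $\|\mc V_Q\mc W_Q\|^p$ is uniformly comparable to $[W,V]_{A_p}$ and the scalar averages $\dashint_Q\|V^{1/p'}\mc V_Q^{-1}\|^{p'}d\mu$, $\dashint_Q\|W^{1/p}\mc W_Q^{-1}\|^{p}d\mu$ are both $\lesssim_d 1$. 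Inserting $\mc V_Q^{-1}(\mc V_Q\mc W_Q)\mc W_Q^{-1}=I$ between $V^{1/p'}(x)$ and $W^{1/p}(y)$ and submultiplicativity of the operator norm give
\[
\|V^{1/p'}(x)W^{1/p}(y)\|\leq\|V^{1/p'}(x)\mc V_Q^{-1}\|\,\|\mc V_Q\mc W_Q\|\,\|\mc W_Q^{-1}W^{1/p}(y)\|,
\]
after which $\|\mc V_Q\mc W_Q\|$ factors out uniformly as $[W,V]_{A_p}^{1/p}$ and the sparse form is bounded by
\[
[W,V]_{A_p}^{1/p}\sum_{Q\in\mc S}\mu(Q)\,\alpha_Q\,\beta_Q,\qquad \alpha_Q:=\dashint_Q\|V^{1/p'}\mc V_Q^{-1}\|\,|f|\,d\mu,\quad \beta_Q:=\dashint_Q\|W^{1/p}\mc W_Q^{-1}\|\,|g|\,d\mu.
\]
A Hölder step in the outer index with exponents $p,p'$ then splits the sum into two square--function--type pieces.

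Each of the two resulting sums $\bigl(\sum_Q\mu(Q)\alpha_Q^p\bigr)^{1/p}$ and $\bigl(\sum_Q\mu(Q)\beta_Q^{p'}\bigr)^{1/p'}$ fits into the generalized weighted Carleson embedding theorem announced in the abstract, with the \emph{locally adapted} scalar weights $\{\|V^{1/p'}\mc V_Q^{-1}\|^{p'}\}_{Q\in\mc S}$ and $\{\|W^{1/p}\mc W_Q^{-1}\|^{p}\}_{Q\in\mc S}$ playing the role of the cube--dependent weights. Provided the embedding's compatibility hypothesis can be verified for these two families with constants controlled by $[V]_{A_{p',\infty}^{sc}}$ and $[W]_{A_{p,\infty}^{sc}}$ respectively, the embedding produces $\lesssim[V]_{A_{p',\infty}^{sc}}^{1/p}\|f\|_{L^p}$ and $\lesssim[W]_{A_{p,\infty}^{sc}}^{1/p'}\|g\|_{L^{p'}}$, and multiplying the three factors yields exactly the claimed inequality with the exponents as stated.

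The hardest part will be precisely this Carleson--embedding step. In the doubling setting Cruz-Uribe--Isralowitz--Moen sidestep it by invoking a reverse--Hölder inequality to upgrade a matrix $L^p$ average to an $L^{p+\varepsilon}$ average, thereby collapsing the entire sparse sum to a single scalar Carleson embedding against a fixed weight. No such upgrade is available for general Radon measures $\mu$, so the scalar weight genuinely changes from cube to cube and one must verify directly that the family stacks coherently along chains in $\mc S$. Showing that the appropriate measure of coherence is the scalar Fujii--Wilson characteristic $A_{\bullet,\infty}^{sc}$—and not the full matrix $A_p$ constant—is precisely what the generalized Carleson embedding of the paper is built to provide, and is the genuine technical heart of the matrix--weighted argument in the nonhomogeneous setting.
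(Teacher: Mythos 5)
Your proposal tracks the paper's proof almost step for step: pointwise insertion of reducing operators to peel off a uniform factor of $[W,V]_{A_p}^{1/p}$, H\"older in the outer sparse index to split the form into two square-function-type sums, and an appeal to the generalized weighted Carleson embedding theorem with the cube-dependent scalar weights $w_Q=\bv{\mc W_Q^{-1}W^{1/p}}^p\1_Q$ (and its $V$ analogue). The minor cosmetic difference — first passing to the matrix operator norm $\|V^{1/p'}(x)W^{1/p}(y)\|$ and then inserting $\mc V_Q^{-1}(\mc V_Q\mc W_Q)\mc W_Q^{-1}$, rather than inserting the reducing operators into the dot product before taking norms — lands at exactly the same intermediate sum, so there is no substantive divergence in the route.

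One conceptual imprecision is worth correcting, because it would lead to a wrong exponent if carried through literally. You write that the compatibility hypothesis of the embedding is verified "with constants controlled by $[V]_{A_{p',\infty}^{sc}}$ and $[W]_{A_{p,\infty}^{sc}}$." In fact, in the paper's Lemma \ref{lem:square_function_estimate}, the compatibility constant $A$ in \eqref{eqn:compatibility} is only $\lesssim_d 1$: it follows from submultiplicativity and the defining comparability of reducing operators, with no reference to any weight characteristic. The Fujii--Wilson $A_\infty$ constant instead enters through the \emph{Carleson testing condition} \eqref{eqn:carleson_condition_2}: one shows $[w_P]_{A_\infty}\lesssim_{d,p}[W]_{A_{p,\infty}^{sc}}$ by decomposing $w_P$ as a finite sum of scalar $A_\infty$ weights, and then bounds the test sum $\frac{1}{\mu(P)}\sum_{Q\in\mc S(P)}\ang{w_P}_Q\mu(Q)$ by $[w_P]_{A_\infty}\ang{w_P}_P$ using sparseness and the dyadic maximal function. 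Since the final Carleson constant scales as $A^{1+1/p'}C_2$, placing $[W]_{A_{p,\infty}^{sc}}$ inside $A$ rather than $C_2$ would yield the exponent $(1+1/p')/p'$ in the square-function bound instead of the correct $1/p'$. Your claimed final estimate is right, so this is a misattribution of where the $A_\infty$ quantity lives rather than an error that propagates, but it is precisely the point at which the reverse-H\"older-free argument is won.
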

A direct consequence of \cref{MatrixWeightedThm} is the following.
\begin{corollary}\label{cor:martingale_multiplier}
    Let $\mu$ be a Radon measure on $\R^n$ and let $S \in \{T_\sigma, T\}$, where $T$ is a $L^1$ normalized shift as in \cref{D: L^1 normalized} of any complexity $(s,t)$, and $T_\sigma$ a martingale multiplier
    \[T_\sigma f = \sum_{Q \in \mc{D}}\sigma_Q \Delta_Q f\]
    where $\sigma = \{\sigma_Q\}_{Q \in \mc{D}}$ is a sequence with $\sigma_Q \in \{\pm 1\}$ and the martingale differences $\Delta_Q$ are defined by
    \[\Delta_Qf = \sum_{P \in \textup{ch}(Q)}\1_P(\ang{f}_P - \ang{f}_Q).\]
    Then if $1 < p < \infty$ and $W \in A_p$, we have
    \[ \|S\|_{L^p(W) \to L^p(W)} \lesssim_{p,d} [W]_{A_p}^{1 + \frac{1}{p-1} - \frac{1}{p}}.\]
\end{corollary}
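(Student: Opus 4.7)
The plan is to deduce the corollary by combining Theorem~A and Theorem~B via a standard duality argument. The key observation is that both $T$ and $T_\sigma$ admit a pointwise convex body sparse domination
\[
Sf(x) \in C \sum_{Q \in \mc{S}} \dang{f}_Q \1_Q(x), \qquad x \in Q_0.
\]
For the $L^1$-normalized shift $T$ this is exactly the second half of Theorem~A. For the martingale multiplier $T_\sigma$, I would verify either that it fits directly into the class of $L^1$-normalized shifts of \cref{D: L^1 normalized} (at complexity $(0,0)$, say, since $\Delta_Q$ is purely local and already has the correct $L^1$-normalization on the children of $Q$), or else that the proof of Theorem~A specializes verbatim to this simpler operator; this verification is the step requiring the most attention.

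Next, I would convert the convex body containment into the vector dual sparse form $|\ang{Sf, g}| \lesssim \mc{A}_{\mc{S}}(f, g)$ needed to invoke Theorem~B. Unpacking the inclusion, $Sf(x) = C \sum_Q \1_Q(x) \ang{f \psi_{Q,x}}_Q$ for some $\psi_{Q,x}$ with $\|\psi_{Q,x}\|_\infty \le 1$, so pairing with $g(x)$ and using the dual characterization of $\dang{f}_Q$ gives
\[
|g(x) \cdot Sf(x)| \le C \sum_{Q \in \mc{S}} \1_Q(x) \dashint_Q |g(x) \cdot f(y)|\, d\mu(y).
\]
Integrating against $d\mu(x)$ and applying Fubini yields the desired bilinear bound.

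To conclude, I would apply the one-weight form of Theorem~B. Writing $\|Sf\|_{L^p(W)} = \sup_{\|h\|_{L^{p'}}=1}|\ang{Sf, W^{1/p}h}|$ and substituting $f \mapsto W^{-1/p}f$, the bilinear sparse bound combined with Theorem~B produces
\[
\|S(W^{-1/p}f)\|_{L^p(W)} \lesssim \sup_{\|h\|_{L^{p'}}=1} \mc{A}_{\mc{S}}(W^{-1/p}f, W^{1/p}h) \lesssim [W]_{A_p}^{1+\frac{1}{p-1}-\frac{1}{p}} \|f\|_{L^p},
\]
which after undoing the substitution is exactly the claimed estimate.

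A minor technicality is that Theorem~A produces the sparse domination only for $f$ compactly supported in a single dyadic cube $Q_0$; to pass to general $f \in L^p(W)$ I would truncate to an exhausting family of cubes $Q_0^k \uparrow \R^n$ and pass to the limit by dominated convergence. The principal obstacle, as already noted, is to confirm that the martingale multiplier $T_\sigma$ actually falls under the hypotheses of Theorem~A: if the definition of $L^1$-normalized shift does not literally include $T_\sigma$, one should nonetheless be able to rerun the proof of Theorem~A in this simpler setting without any essentially new idea.
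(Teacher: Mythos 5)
Your treatment of the $L^1$-normalized shift $T$ is correct and matches the paper: you apply the second half of Theorem~\ref{thm: sparseforvectorhaar} to get pointwise convex body sparse domination, convert it to the bilinear form via \cref{lem:pointwise_to_form}, and then invoke the one-weight case of Theorem~\ref{MatrixWeightedThm} via duality. The limiting step you flag (exhausting $\R^n$ by dyadic cubes $Q_0^k$) is a standard technicality and is unproblematic.

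The gap is in your treatment of $T_\sigma$. Your first proposed route --- that $T_\sigma$ is itself an $L^1$-normalized Haar shift of complexity $(0,0)$ --- is false. Even in $n=1$, where $\Delta_Q f = \ang{f, h_Q}h_Q$ really is rank one, the kernel of $\sigma_Q \Delta_Q$ is $\sigma_Q h_Q(x)h_Q(y)$, whose $L^\infty$ norm is comparable to $1/m(Q)$, not $1/\mu(Q)$; the $L^1$-normalization \cref{D: L^1 normalized} would require $\mu(Q) \lesssim m(Q)$, which forces both children of $Q$ to have measure comparable to $\mu(Q)$, i.e. dyadic doubling. In $n \geq 2$ the situation is worse: $\Delta_Q$ projects onto the $(2^n-1)$-dimensional martingale-difference space, so it is not of the single-$h_Q$ form required by Definition~\ref{def: vectorhaarshift} at any complexity. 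Your second hedged route --- rerun the proof of Theorem~A --- also does not go through verbatim, because the crux of that proof (showing $T_{\sq{Q}_j}f \,\1_{Q_j} \in C\dang{f}_{Q_0}\1_{Q_j}$) relies precisely on the kernel bound $|c_{\widetilde{Q}_j}|\lesssim \bigl(\mu(\widetilde{Q}_j)\|h_{S_j}\|_\infty\|h_{T_j}\|_\infty\bigr)^{-1}$, which $T_\sigma$ does not satisfy. What actually rescues the martingale multiplier is a different and simpler mechanism, which the paper exploits with a dedicated stopping-time decomposition: for a selected cube $Q$ and $x\in Q$, the residual term $\sigma_{\widehat Q}\Delta_{\widehat Q}f(x)\1_Q(x) = \sigma_{\widehat Q}\bigl(\ang{f}_Q - \ang{f}_{\widehat Q}\bigr)\1_Q(x)$ is literally a difference of two averages, each of which lies in an appropriate convex body (one trivially in $\dang{f}_Q$, the other in $C\dang{f}_{Q_0}$ by the stopping condition), so no kernel normalization is needed. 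You correctly flagged this as ``the step requiring the most attention,'' but your two candidate resolutions are, respectively, incorrect and insufficiently specified; the argument above is the missing content.
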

We should mention that, in the scalar case and for $p=2$, weighted results for martingale multipliers were proved in \cite{ThieleTreilVolberg} and were stated for $L^1$ normalized shifts in \cite{Vasyunin_Volberg_2020} appealing to different techniques. \par 

The proof of \cref{MatrixWeightedThm} relies on a variable weight generalization of the dyadic Carleson Embedding Theorem which we regard as a meaningful contribution in its own right.

\begin{atheorem}\label{GeneralizedCarlesonEmbedding}
    Let $\{w_Q\}_{Q \in \mc{D}}$ be a collection of scalar weights indexed by $\mc{D}$ so that each $w_Q$ is localized to $Q$ (i.e. $w_Q \in L^1_{loc}(\mu)$ and $w_Q > 0$ $\mu$-almost everywhere on $Q$) and so that the following property is satisfied:
    \begin{equation}\label{eqn:compatibility}
        \|w_P w_{Q}^{-1}\|_{L^\infty(Q)} \leq A\ang{w_{P}}_{Q}\ang{w_{Q}}_{Q}^{-1} \quad \text{for all} \quad Q,P \in \mc{D}, \quad Q \subset P
    \end{equation}
    Let $\{\alpha_Q\}_{Q \in \mc{D}}$ be a sequence of real numbers indexed by $\mc{D}$. Then for $1 < p < \infty$,
    \begin{gather}
            \sum_{Q \in \mc{D}}\ang{w_Q^{1/p'}f}_Q^{p}\alpha_Q \leq C_1 \|f\|_{L^{p}}^{p} \quad \text{for all} \quad  f \in L^p(\mu)  \\ 
            \nonumber \text{if and only if}  \\ 
            \frac{1}{\mu(P)}\sum_{Q \in \mc{D}(P)}\ang{w_P}_{Q}\ang{w_Q}_Q^{p-1}\alpha_Q \leq C_2 \ang{w_P}_P \quad \text{for all} \quad P \in \mc{D} 
    \end{gather}
    Moreover, if $C_1$ and $C_2$ are the best constants in the above inequalities, then, 
    \[A^{-(p-1)}C_2 \leq C_1 \lesssim_{p} A^{1 + 1/p'}C_2.\]
\end{atheorem}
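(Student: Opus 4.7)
My plan is to prove each implication separately, tracking the precise $A$-dependence.

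For the necessity direction ($A^{-(p-1)}C_2\le C_1$), I would fix $P\in\D$ and test the embedding with the probe $f_P:=w_P^{1/p}\chi_P$, for which $\|f_P\|_{L^p}^p=\mu(P)\langle w_P\rangle_P$. Raising \cref{eqn:compatibility} to the $1/p'$-th power yields the pointwise bound $w_Q^{1/p'}\ge A^{-1/p'}(\langle w_Q\rangle_Q/\langle w_P\rangle_Q)^{1/p'}w_P^{1/p'}$ for almost every $x\in Q\subset P$; integrating against $w_P^{1/p}$ over $Q$ gives $\langle w_Q^{1/p'}f_P\rangle_Q\ge A^{-1/p'}\langle w_Q\rangle_Q^{1/p'}\langle w_P\rangle_Q^{1/p}$. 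Raising to the $p$-th power, summing over $Q\subset P$ against $\alpha_Q$, and applying the embedding produces exactly the testing condition with constant $A^{p-1}C_1$.

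The sufficiency direction ($C_1\lesssim_p A^{1+1/p'}C_2$) is more delicate and I plan to reduce it to the classical single-weight weighted CET, with the two factors of $A$ arising from two uses of compatibility. Fix $P_0\in\D$ containing $\mathrm{supp}\,f$. First, the sequence $\tilde\beta_Q:=\langle w_{P_0}\rangle_Q\langle w_Q\rangle_Q^{p-1}\alpha_Q$ is $w_{P_0}$-Carleson on $P_0$ with constant $\lesssim AC_2$: for any $P'\subset P_0$, \cref{eqn:compatibility} applied at $(P_0,P')$ and averaged over $Q\subset P'$ gives $\langle w_{P_0}\rangle_Q\le A(\langle w_{P_0}\rangle_{P'}/\langle w_{P'}\rangle_{P'})\langle w_{P'}\rangle_Q$, and plugging into the testing at $P'$ yields $\sum_{Q\subset P'}\tilde\beta_Q\le AC_2\,w_{P_0}(P')$ (the first factor of $A$). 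The classical weighted CET for $w_{P_0}$ with this Carleson sequence, applied to $F:=f\,w_{P_0}^{-1/p}$ (which has $\|F\|_{L^p(w_{P_0}d\mu)}=\|f\|_{L^p(\mu)}$), gives $\sum_{Q\subset P_0}\langle w_{P_0}^{1/p'}f\rangle_Q^p\langle w_Q\rangle_Q^{p-1}\alpha_Q/\langle w_{P_0}\rangle_Q^{p-1}\lesssim_p AC_2\|f\|_{L^p}^p$. To convert this $w_{P_0}$-type bound into the desired $w_Q$-type bound $\sum_Q\langle w_Q^{1/p'}f\rangle_Q^p\alpha_Q$, I plan to perform a dyadic corona decomposition on $f$ (principal cubes via doubling of $\langle f^p\rangle_Q$), and within each corona use compatibility pointwise in the form $\langle w_F^{1/p'}f\rangle_Q\le A^{1/p'}(\langle w_F\rangle_Q/\langle w_Q\rangle_Q)^{1/p'}\langle w_Q^{1/p'}f\rangle_Q$ (second factor of $A$) to transfer the CET bound after passing through the localized testing at each principal cube $F$; summing over the sparse principal cube family then recovers $\|f\|_{L^p}^p$.

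The main obstacle is that \cref{eqn:compatibility} is one-sided—bounding $w_P$ pointwise by $w_Q$ but not the reverse—so the natural ratio comparison between $\Psi_{w_{P_0}}(f,Q)$ and $\Psi_{w_Q}(f,Q)$ (where $\Psi_w(f,Q):=\langle w^{1/p'}f\rangle_Q/\langle w\rangle_Q^{1/p'}$) goes in the ``wrong'' direction for a direct upgrade of the single-weight CET bound. The dyadic corona decomposition on $f$, together with careful tracking of constants through the stopping-time procedure and the sparse summation, is what I expect to be the main technical content of the proof and what ultimately recovers the asserted sharp exponent $1+1/p'$ on $A$.
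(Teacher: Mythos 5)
Your necessity direction is correct and is the same computation as the paper's: test with $f_P = w_P^{1/p}\1_P$ and use \eqref{eqn:compatibility} raised to the $1/p'$ power. That yields $C_2 \le A^{p-1}C_1$ as required.

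The sufficiency direction has a genuine gap, and it lies exactly in the place you flag as the main technical content. Your first application of compatibility is fine and correctly shows that $\tilde\beta_Q = \ang{w_{P_0}}_Q\ang{w_Q}_Q^{p-1}\alpha_Q$ is $w_{P_0}$-Carleson with constant $\lesssim AC_2$. But the single-weight CET then gives control on $\sum_Q \ang{w_{P_0}^{1/p'}f}_Q^p \, \ang{w_Q}_Q^{p-1}\ang{w_{P_0}}_Q^{-(p-1)}\alpha_Q$, and to get the desired bound on $\sum_Q \ang{w_Q^{1/p'}f}_Q^p\alpha_Q$ you would need, for each $Q$ inside the reference cube $F$ (whether $F = P_0$ or a principal cube from your corona), the \emph{upper} bound $\ang{w_Q^{1/p'}f}_Q \lesssim A^{\theta}\bigl(\ang{w_Q}_Q/\ang{w_F}_Q\bigr)^{1/p'}\ang{w_F^{1/p'}f}_Q$. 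But \eqref{eqn:compatibility} only controls the outer weight $w_F$ pointwise from above by the inner weight $w_Q$ on $Q$; the inequality you write, $\ang{w_F^{1/p'}f}_Q \le A^{1/p'}(\ang{w_F}_Q/\ang{w_Q}_Q)^{1/p'}\ang{w_Q^{1/p'}f}_Q$, is correct but gives only a \emph{lower} bound on $\ang{w_Q^{1/p'}f}_Q$. Equivalently, it shows the summand of the single-weight CET estimate is $\le A^{p-1}$ times the summand you want to bound — a tautology pointing the wrong way. A corona decomposition driven by $\ang{|f|^p}_Q$ does not help: the problematic comparison concerns the weights $w_Q$ vs.\ $w_F$, not $f$, and the one-sidedness of \eqref{eqn:compatibility} persists within a corona just as it does across the whole tree. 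You could instead try Hölder, $\ang{w_Q^{1/p'}f}_Q^p \le \ang{w_Q}_Q^{p-1}\ang{|f|^p}_Q$, together with the corona stopping rule, but that requires $\sum_{Q\subset F}\ang{w_Q}_Q^{p-1}\alpha_Q \lesssim \mu(F)$, which does not follow from the testing condition because $\ang{w_F}_Q$ is not bounded below.

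The paper's actual argument does not pass through the single-weight CET at all. It normalizes $\ang{w_Q}_Q = 1$, writes the adjoint $H^*x = \sum_Q x_Q w_Q^{1/p'}\alpha_Q$, expands $\|H^*x\|_{L^{p'}}^{p'}$ via the non-integer-exponent expansion lemma (\cref{lem:expanding_sum}), sorts the resulting $(m+1)$-tuples of cubes by inclusion, and applies \eqref{eqn:compatibility} to collapse all the weights onto the \emph{innermost} cube $Q_1$ — which is the direction compatibility actually allows. This produces a kernel $k_{Q,P} = \ang{w_P}_Q^{1/p'}\1_{Q\subset P}$ obeying a quasi-multiplicativity $k_{R,P}\le A^{1/p'}k_{R,Q}k_{Q,P}$, and a self-improving inequality $I \le \|x\| (AC_2 I)^{1/p}$ that is then absorbed; the testing hypothesis enters through the sum $\sum_{Q_1\subset Q_2}k_{Q_1,Q_2}^{p'}\beta_{Q_1}\le C_2\mu(Q_2)$. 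If you want to salvage your outline, you would need some mechanism — a two-sided version of \eqref{eqn:compatibility}, or a clever change of variables — to reverse the direction of the pointwise comparison between $w_Q$ and $w_F$, and as written no such mechanism appears.
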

Note that by setting $w_Q = w$ for all $Q$, \cref{GeneralizedCarlesonEmbedding} recovers the usual (weighted formulation of the) Carleson Embedding Theorem. See \cref{section 4} for more details and a discussion about the compatibility condition \cref{eqn:compatibility}. \par
By combining Theorem \ref{thm: sparseforvectorhaar} with the proof of Theorem \ref{MatrixWeightedThm}, we provide quantitative weighted $L^p(W)$ estimates for vector Haar shifts of complexity $s+t=N$ when $W$ is a matrix weight belonging to a weight class $A_p^N$, which reflects the nonhomogeneous structure and properly contains $A_p$. These weighted estimates generalize the scalar results in \cite{CPW} to vector Haar shifts and matrix weights. We also show in Section \ref{ModifiedClass} that these weighted estimates are qualitatively sharp, at least in the case $n=1$ and for certain ``non-degenerate'' Haar systems when $n \geq 2.$ We postpone the precise definition of these weight classes to Section \ref{ModifiedClass}.

\begin{corollary} \label{cor:WeightedHaarShift}
Let $\mu$ a Radon measure in $\R^n$ and $\mathscr{H}$ a generalized Haar system such that the pair $(\mu, \mathscr{H})$ is balanced. Let $T$ be a vector Haar shift of complexity $(s,t)$ with $s+t=N$, $1<p<\infty$, and $W \in A_p^N$. Then there holds
$$ \|T  \|_{L^p(W) \to L^p(W)} \lesssim [W]_{A_p}^{1 + \frac{1}{p-1} - \frac{2}{p}}[W]_{A_p^N}^{\frac{1}{p}}$$
where the implicit constant depends only on $n, d, N, p, \mu,$ and $\mathscr{H}$ (not on $f$ or $W$). 
    
\end{corollary}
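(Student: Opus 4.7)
The plan is to combine the modified pointwise convex body domination from Theorem~\ref{thm: sparseforvectorhaar} with a refinement of the weighted sparse argument from Theorem~\ref{MatrixWeightedThm}, tracking how the extra ``diagonal'' sparse term forces the appearance of $[W]_{A_p^N}$. By matrix-valued duality, it suffices to bound $|\langle Tf, g\rangle|$ for $\R^d$-valued test functions with $\|f\|_{L^p(W)} = \|g\|_{L^{p'}(W^{1-p'})} = 1$. After substituting $f \mapsto W^{-1/p}f$ and $g \mapsto W^{1/p}g$, integrating the pointwise convex body inclusion of Theorem~\ref{thm: sparseforvectorhaar} against $g$, and using the defining property of the support function of $\dang{f}_Q$, we bound $|\langle Tf, g\rangle|$ by the sum of two dual sparse forms,
\begin{align*}
    \mathcal{A}^{\text{std}}_{\mathcal{S}} &= \sum_{Q \in \mathcal{S}} \dashint_Q \dashint_Q |W^{-1/p}f(x) \cdot W^{1/p}g(y)|\, d\mu(x)\, d\mu(y)\, \mu(Q), \\
    \mathcal{A}^{\text{corr}}_{\mathcal{S}} &= \sum_{\substack{J,K \in \mathcal{S} \\ \text{dist}(J,K) \leq N+2}} \sqrt{m(J)m(K)} \, \dashint_J |W^{-1/p}f|\, d\mu \cdot \dashint_K |W^{1/p}g|\, d\mu,
\end{align*}
each of which must be controlled by the claimed constant times $\|f\|_{L^p}\|g\|_{L^{p'}}$.

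For the standard form, Theorem~\ref{MatrixWeightedThm} applied directly with $V = W^{-p'/p}$ yields the bound $[W]_{A_p}^{1 + \frac{1}{p-1} - \frac{1}{p}} \|f\|_{L^p}\|g\|_{L^{p'}}$, which is absorbed into the target constant $[W]_{A_p}^{1 + \frac{1}{p-1} - \frac{2}{p}} [W]_{A_p^N}^{1/p}$ via the trivial inequality $[W]_{A_p} \leq [W]_{A_p^N}$. The heart of the proof is therefore the correction form $\mathcal{A}^{\text{corr}}_{\mathcal{S}}$. The restriction $\text{dist}(J,K) \leq N+2$ guarantees that every contributing pair $(J,K)$ shares a common dyadic ancestor $P = P(J,K) \in \mathcal{D}$ at most $O(N)$ generations above both $J$ and $K$, and each cube is paired with only $O_N(1)$ such partners. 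The class $A_p^N$ is tailored precisely to provide, across these finitely many scales, the same kind of multi-cube comparability of matrix averages of $W$ and $W^{-p'/p}$ that $[W]_{A_p}$ provides on a single cube in the argument of Theorem~\ref{MatrixWeightedThm}.

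The strategy for $\mathcal{A}^{\text{corr}}_{\mathcal{S}}$ is to mimic the proof of Theorem~\ref{MatrixWeightedThm} but with the weight comparisons carried out at the scale of the common ancestor $P$ instead of a single cube. The inequality $\sqrt{m(J)m(K)} \leq \tfrac{1}{2}\sqrt{\mu(J)\mu(K)}$, available because of the balanced hypothesis, together with the bounded multiplicity of neighbors, allows one to apply Cauchy--Schwarz to collapse the double sum over $(J,K)$ into a Carleson-type sum indexed by ancestors $P$. The generalized Carleson embedding Theorem~\ref{GeneralizedCarlesonEmbedding} is then invoked with the localized weight family $\{\langle W\rangle_P \1_P\}_P$ (or its $V$-analogue), whose compatibility condition \eqref{eqn:compatibility} is supplied by the defining inequality of $A_p^N$ for pairs $(Q,P)$ at dyadic distance at most $N+2$. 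Tracking the exponents shows that this substitution trades one scalar $A_{p,\infty}^{sc}$ factor, which in the original argument of Theorem~\ref{MatrixWeightedThm} is bounded by $[W]_{A_p}^{1/p}$, for the genuinely nonhomogeneous quantity $[W]_{A_p^N}^{1/p}$, producing exactly the claimed exponents.

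The main obstacle will be verifying that the weight family built from the nearby-cube grouping satisfies \eqref{eqn:compatibility} with constant $O([W]_{A_p^N})$ rather than a larger quantity, and carrying out the Cauchy--Schwarz reorganization so that the factor $\sqrt{m(J)m(K)}$ is absorbed cleanly by a single application of Theorem~\ref{GeneralizedCarlesonEmbedding}. Once these technical steps are in place, the remaining estimates follow from the proof of Theorem~\ref{MatrixWeightedThm} essentially verbatim.
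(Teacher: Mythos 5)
Your proposal correctly identifies the skeleton (apply Theorem~\ref{thm: sparseforvectorhaar} and Lemma~\ref{lem:pointwise_to_form} to get $|\langle Tf,g\rangle|\lesssim \mc{A}_{\mc S}+\mc{A}_{\mc S}^N$, then handle $\mc{A}_{\mc S}$ via Theorem~\ref{MatrixWeightedThm} and $[W]_{A_p}\leq[W]_{A_p^N}$), but your treatment of the correction form $\mc{A}_{\mc S}^N$ contains a genuine gap and a conceptual misreading of where $[W]_{A_p^N]$ enters.

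The paper's mechanism is as follows. For each pair $J,K\in\mc S$ with $\mathrm{dist}_{\mc D}(J,K)\leq N+2$, insert the reducing operators $\mc{V}_K$ and $\mc{W}_J$ at the native scales $K$ and $J$ (not at a common ancestor), exactly as in the proof of Theorem~\ref{MatrixWeightedThm}. The reducing-operator characterization \eqref{eqn:reducing_operator_modified} of $[W]_{A_p^N}$ then gives, since $c_p^b(J,K)=\frac{m(J)^{p/2}m(K)^{p/2}}{\mu(J)^{p-1}\mu(K)}$,
\[\sqrt{m(J)}\sqrt{m(K)}\,\bv{\mc W_J\mc V_K}\;\lesssim_d\;[W]_{A_p^N}^{1/p}\,\mu(J)^{1/p'}\mu(K)^{1/p}.\]
That is: the factor $\sqrt{m(J)m(K)}$ is absorbed \emph{exactly} by the $c_p^b$-normalization built into $A_p^N$, producing $\mu(J)^{1/p'}\mu(K)^{1/p}$ with no loss. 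After that, Hölder on the double sum (using the bounded multiplicity $O_N(1)$ of admissible partners) \emph{decouples} the $J$- and $K$-sums and reproduces precisely the two square functions \eqref{eqn:sparse_square_1}, \eqref{eqn:sparse_square_2} already bounded in Lemma~\ref{lem:square_function_estimate}. No new weight families, no common ancestors, and no modified compatibility verification are needed: Lemma~\ref{lem:square_function_estimate} applies verbatim with $A\lesssim_d 1$, and the exponent arithmetic then follows from Proposition~\ref{prop:weight_duality}.

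Your crude replacement $\sqrt{m(J)m(K)}\leq\sqrt{\mu(J)\mu(K)}$ discards precisely the $m$-information that couples with $A_p^N$. After that replacement, bounding the form would require $\sup\,\bv{\mc W_J\mc V_K}$ over nearby pairs $(J,K)$ \emph{without} the $c_p^b(J,K)$ normalization; but $c_p^b(J,K)$ can be arbitrarily small for balanced nondoubling measures, so $[W]_{A_p^N}$ does \emph{not} control that unnormalized supremum, and the argument stalls. Separately, your claim that the compatibility condition \eqref{eqn:compatibility} of Theorem~\ref{GeneralizedCarlesonEmbedding} is ``supplied by the defining inequality of $A_p^N$'' misidentifies roles: in the paper the compatibility constant is a dimensional $A\lesssim_d 1$ coming from the intrinsic reducing-operator comparison $\bv{\mc W_P^{-1}W^{1/p}}\leq\bv{\mc W_P^{-1}\mc W_Q}\,\bv{\mc W_Q^{-1}W^{1/p}}$, which holds for any matrix weight, while $[W]_{A_{p,\infty}^{sc}}\leq[W]_{A_p}$ enters through the testing condition via the $A_\infty$-maximal bound. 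The quantity $[W]_{A_p^N}$ never touches the Carleson embedding step at all. To repair the proposal, drop the common-ancestor grouping and the crude $m\to\mu$ bound, and instead couple $\sqrt{m(J)m(K)}$ with $[W]_{A_p^N}$ at the pointwise level through $c_p^b(J,K)$ and \eqref{eqn:reducing_operator_modified}, after which the rest of the argument is literally the proof of Theorem~\ref{MatrixWeightedThm}.
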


\subsection{Outline of the paper}  We discuss in more details the structure of the paper. In \cref{section 2} we introduce known results and we describe the balanced condition in higher dimensions, extending the discussions in \cite{LSMP} and \cite{CPW}. Differently from the one-dimensional case, the definition crucially involves both the underlying measure $\mu$ and the choice of the Haar system $\mathscr{H}$, so that we will refer to balanced pairs $(\mu, \mathscr{H})$ instead of balanced measures. In \cref{section 3} we prove \cref{thm: sparseforvectorhaar}, which is a combination of \cref{T:thm A 1st part} and \cref{T:thm A 2nd part}. The first deals with balanced pairs and general shifts, the second with general Radon measures and $L^1$ normalized shifts, which are defined in \cref{D: L^1 normalized}. In \cref{section 4} and \cref{ModifiedClass} we deal with matrix weighted $L^p$ estimates for the usual sparse form and the modified sparse forms arising in \cref{T:thm A 1st part}. Even if, as mentioned before, we do not use the Orlicz bumps technique as in \cite{cruzuribe2017} to deduce one weight results (the lack of reverse H\"{o}lder inequality being the major obstacle), we can still prove two weight inequalities for nonhomogeneous Haar shifts with Orlicz bump conditions on the weights; see Section \ref{ModifiedClass} for details.

\subsection{Acknowledgments}
We thank Sergei Treil for numerous helpful discussions related to convex body domination and matrix weighted inequalities. We also thank Jos\'{e} Conde Alonso, Jill Pipher and Brett Wick for their useful input on the structure and content of this paper.

\section{Preliminaries} \label{section 2}
\subsection{Haar systems and the balanced condition}

In this section, we introduce the balanced condition in $\R^n$. Differently from the one-dimensional case where, given a fixed dyadic grid $\D$ and a Borel measure $\mu$, there is a unique -up to sign- choice for a Haar system normalized in $L^{2}(\mu)$, in higher dimensions the balanced condition depends both on the measure and the choice of the Haar system. In what follows $\mu$ is a Borel measure on $\R^n$, $n\geq 1$ such that $0<\mu(Q)<\infty$ for every $Q\in \mathcal{D}$. We further assume that each quadrant has infinite measure.

\begin{definition}
We say $\mathscr{H}_\D=\{h_Q\}_{Q \in \D}$ is a generalized Haar system in $\R^n$ if the following holds:
\begin{enumerate}
    \item for every $Q \in \D$ we have $\mathrm{supp}(h_Q) \subset Q$;
    \item for every $R \in \mathcal{D}(Q)$, $R\subsetneq Q$, $h_Q$ is constant on $R$; in particular \[h_Q(x)=\sum_{R \in ch(Q)} \alpha_R \1_R(x),\] where  $\alpha_R \in \R$ and the set of $2^n$ children of $Q$ is denoted as \[ ch(Q)= \left\{ R \in \D: R \subsetneq Q \text{ and } \ell(R)=\frac{1}{2}\ell(Q) \right\}. \]  Here, $\ell(R)$ is the side-length of $R$; 
    \item for every $Q \in \D$, $h_Q$ has zero mean, i.e. $\int_Q h_Q(y)d\mu(y)=0$;
    \item for every $Q \in \D$, we have $\|h_Q\|_{L^2(\mu)}=1.$

\end{enumerate}
Furthermore, we say $\mathscr{H}$ is standard if 
\begin{equation} \label{D: standard}
    \Xi \left[\mathscr{H}, 0,0 \right]:= \sup_{Q \in \D} \|h_Q\|_{L^1(\mu)} \|h_Q\|_{L^\infty(\mu)} <\infty.   
   \end{equation}
\end{definition}

\begin{remark}
Using these conditions, one readily shows that the functions $\{h_Q\}$ form an orthonormal set in $L^2(\R^n).$ Notice that such a collection of functions $\mathcal{H}_{\D}$ may not form an orthonormal basis for $L^2(\R^n)$; i.e. the functions may not span all of $L^2(\R^n).$ 
The assumption $0<\mu(Q) < \infty$ is superfluous, in that if we allow $\mu$ to vanish on some dyadic cube in $\D$ and if $\mu(Q)=0$, we can set $h_Q\equiv0$; also, if $\mu(Q)=\mu(R)$ for some child $R$ of $Q$, then every brother of $R$ has null measure and again we set $h_Q \equiv 0$. Therefore, we can set $\D_{\mathscr{H}} \subset \D$ to be the set of dyadic cubes $Q \in \D$ with non-vanishing associated Haar function $h_Q$. Note that $\mathscr{H}_{\D_{\mathscr{H}}}=\{h_Q\}_{Q \in\D_{\mathscr{H}}}$ is an orthonormal system. In light of this, we assume $0< \mu(Q) < \infty$ for simplicity.  Finally, condition \eqref{D: standard} is equivalent to asking weak $(1,1)$-continuity of Haar multipliers, as it was observed in \cite{LSMP}. 

\end{remark}

\begin{definition}\label{def: vectorhaarshift}
A (vector) Haar shift $T$ of complexity $(s,t)$ acting (a priori) on vector-valued functions $f \in L^2(\R^n; \R^d)$ takes the form
\begin{gather} \label{D: dyadic shifts}Tf(x)= \sum_{Q \in \D} T_Qf(x):= \sum_{Q \in \D} \int_Q K_Q(x,y)f(y)d\mu(y),
\end{gather}
where
\begin{gather*}
K_Q(x,y)=\sum_{\substack{J \in \D_s(Q)\\ K \in \D_t(Q)}} c_{J,K}^Qh_J(y)h_K(x), \quad \text{ and }\quad \sup_{Q,J,K} | c_{J,K}^Q| \leq 1.\end{gather*} 
If, in addition, one has $\inf_{Q,J,K} | c_{J,K}^Q|>0$, then we say that $T$ is a non-degenerate (vector) Haar shift of complexity $(s,t)$.
\end{definition} 

\begin{remark}
The normalization $\sup_{Q,J,K} | c_{J,K}^Q| \leq 1$ is assumed for convenience, but it can also be altered as we later discuss. We denote the collection of all Haar shifts of complexity $(s,t)$ with respect to a standard Haar system $\mathscr{H}_{\D}$ by $HS(s,t, \mathscr{H}_{\D})$.
    
\end{remark}
In \cite{LSMP}, the authors gave necessary and sufficient conditions for $(\mu, \mathscr{H})$ to have weak $(1,1)$ boundedness of scalar-valued Haar shifts (i.e. $d=1$ in Definition \ref{def: vectorhaarshift}).
\begin{theorem}[\cite{LSMP}, Theorem 2.11] \label{T: LSMP}
Let $\mu$ and $\mathscr{H}$ as above, $n\geq 1$ and $d=1$. A non-degenerate Haar shift $T$ of complexity $(s,t)$
is weak $(1,1)$ if and only if 
\begin{equation}\label{Hstquantity}
\Xi \left[\mathscr{H}, s, t \right]:= \sup_{Q \in \D} \left\{ \|h_J\|_{L^\infty(\mu)}\|h_K\|_{L^1(\mu)}:  J \in \D_s(Q), K \in \D_t(Q)\right\} < \infty.
\end{equation}
Furthermore, we have that $\|T\|_{L^1(\mu) \to L^{1, \infty}(\mu)} \lesssim_\Xi 2^{(s+t)n}.$
\end{theorem}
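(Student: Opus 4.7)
The proof splits into necessity and sufficiency.

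For \emph{necessity}, I would run a testing argument. Given $Q \in \D$, $J \in \D_s(Q)$, $K \in \D_t(Q)$, I test $T$ on $f = h_J$: by the block structure of $T$, only $T_Q$ contributes, giving $Tf = \sum_{K' \in \D_t(Q)} c^Q_{J,K'} h_{K'}$ with disjoint-support summands. On the child $R^* \in \mathrm{ch}(K)$ where $|h_K|$ attains its $L^\infty$ norm, $|Tf| \gtrsim \|h_K\|_\infty$ by non-degeneracy. Applying the weak $(1,1)$ hypothesis at threshold $\lambda \simeq \|h_K\|_\infty$ with $\|f\|_1 = \|h_J\|_1$ yields $\mu(R^*) \lesssim \|h_J\|_1/\|h_K\|_\infty$. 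Combined with the elementary Haar identity $\|h_K\|_1 \lesssim \|h_K\|_\infty \mu(R^*)$ (from mean-zero and $L^2$-normalization on $h_K = \sum_{R \in \mathrm{ch}(K)}\alpha_R \1_R$), this produces $\|h_J\|_\infty \|h_K\|_1 \lesssim 1$ uniformly in $Q, J, K$.

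For \emph{sufficiency}, the plan is a dyadic Calderón--Zygmund argument. First, $T$ is $L^2$-bounded with $\|T\|_{L^2 \to L^2} \lesssim 2^{(s+t)n/2}$: distinct scales $Q$ act orthogonally, and the block matrix at each scale has entries bounded by one. Given $f \in L^1(\mu)$ and $\lambda > 0$, let $\{Q_i\}$ be the maximal dyadic cubes with $\langle |f|\rangle_{Q_i} > \lambda$, set $\Omega = \bigsqcup_i Q_i$, and split $f = g + \sum_i b_i$ with $b_i = (f - \langle f\rangle_{Q_i})\1_{Q_i}$. The exceptional set has $\mu(\Omega) \leq \|f\|_1/\lambda$, and the good part is handled by Chebyshev together with the $L^2$ bound, with $\|g\|_2^2 \lesssim \lambda\|f\|_1$ obtained via a Cauchy--Schwarz identity $\sum_i \langle f\rangle_{Q_i}^2 \mu(Q_i) \leq \sum_i \langle |f|\rangle_{Q_i}\int_{Q_i}|f|\,d\mu$ combined with a non-doubling substitute for the pointwise bound on $\langle |f|\rangle_{Q_i}$.

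The structural core is the bad-part estimate. The key observation is that $Th_R$ is supported in the $s$-th dyadic ancestor $R^{(s)}$, since $T_Q$ only sees $h_R$ for $R \in \D_s(Q)$. Expanding $b_i = \sum_{R \subseteq Q_i}\langle b_i, h_R\rangle h_R$ and splitting by whether $R^{(s)} \subseteq Q_i$: the "interior" coefficients $R \in \D_j(Q_i)$ with $j \geq s$ produce $Th_R$ supported in $Q_i \subseteq \Omega$ and contribute nothing off $\Omega$; the at most $\sum_{j=0}^{s-1}2^{jn} \leq 2^{sn}$ "boundary" $R$'s with $R^{(s)} \supsetneq Q_i$ are handled via $\Xi$. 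For each such $R$, since $R \in \D_s(R^{(s)})$, the hypothesis $\Xi < \infty$ gives $\|h_K\|_1 \leq \Xi/\|h_R\|_\infty$ for every $K \in \D_t(R^{(s)})$, hence $\|Th_R\|_1 \leq 2^{tn}\Xi/\|h_R\|_\infty$. Pairing with $|\langle b_i, h_R\rangle| \leq \|b_i\|_1\|h_R\|_\infty$ and summing over the $\leq 2^{sn}$ boundary $R$'s and then over $i$ yields $\|Tb\|_{L^1(\Omega^c)} \lesssim 2^{(s+t)n}\Xi\|f\|_1$, and a final application of Chebyshev closes the estimate with constant $\lesssim 2^{(s+t)n}\Xi$.

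The main obstacle throughout is the lack of doubling of $\mu$: it breaks the usual $L^\infty$ control of $g$ on the CZ cubes and forbids the standard enlargement of the exceptional set from $\Omega$ to $\bigcup_i Q_i^{(s)}$. The strategic choice in the bad-part argument is precisely to avoid that enlargement, estimating $Tb$ in $L^1$ off $\Omega$ directly via $\Xi$ rather than off the much larger (and potentially uncontrollable) set $\bigcup_i Q_i^{(s)}$; this is what reveals $\Xi$ as the correct quantitative substitute for doubling in the CZ argument.
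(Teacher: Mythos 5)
The paper does not supply its own proof of this statement: it is imported verbatim from \cite{LSMP}, Theorem 2.11. So what follows is an assessment of your argument on its own terms.

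Your sufficiency argument has the right structure at its core (the split of the Haar coefficients of $b_i$ into those with $R^{(s)}\subseteq Q_i$ and the at most $O(2^{sn})$ ``boundary'' ones, plus the estimate $\|T h_R\|_{L^1(\mu)}\leq 2^{tn}\Xi/\|h_R\|_{L^\infty(\mu)}$ paired with $|\langle b_i,h_R\rangle|\leq\|b_i\|_1\|h_R\|_\infty$, and the strategic decision \emph{not} to enlarge the exceptional set), and this is indeed where $\Xi[\mathscr{H},s,t]$ enters. But the good-part bound is a genuine gap. With $b_i=(f-\ang{f}_{Q_i})\1_{Q_i}$ the estimate $\|g\|_{L^2(\mu)}^2\lesssim\lambda\|f\|_{L^1(\mu)}$ is \emph{false} for general $\mu$: $\langle|f|\rangle_{Q_i}$ can vastly exceed $\lambda$ (e.g.\ on the line with $\mu([0,1/2])=\varepsilon$, $\mu([1/2,1])=1$, $f=\varepsilon^{-1}\1_{[0,1/2]}$ at height $\lambda=2$, one gets $\|g\|_2^2=1/\varepsilon$ against $\lambda\|f\|_1=2$). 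You flag this (``a non-doubling substitute for the pointwise bound'') but do not resolve it, and no substitute exists for your $b_i$. The standard fix, used in \cref{L: CZD} following Tolsa/NTV, is to set $b_i=f\1_{Q_i}-\langle f\1_{Q_i}\rangle_{\widehat{Q_i}}\1_{\widehat{Q_i}}$, for which $\|g\|_2^2\lesssim\lambda\|f\|_1$ does hold because the average is taken on the non-selected parent. With that change $b_i$ is supported on $\widehat{Q_i}$ with zero mean there, so the Haar expansion picks up the additional boundary cube $R=\widehat{Q_i}$; your book-keeping goes through essentially unchanged, but as written the decomposition is wrong.

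The necessity argument has a second, more structural gap. The ``elementary Haar identity'' $\|h_K\|_{L^1(\mu)}\lesssim\|h_K\|_{L^\infty(\mu)}\,\mu(R^*)$, with $R^*$ the child where $|h_K|$ is maximal, is correct for $n=1$ (where the two-child structure and mean-zero force the larger coefficient onto the smaller-measure child), but \emph{fails} for $n\geq 2$: with four children of measures $1,1,M,M$ and coefficients $\pm a,\pm a/2$ (mean-zero automatically), one has $\|h_K\|_1/\|h_K\|_\infty\sim M$ while $\mu(R^*)=1$. Even granting the identity, the chain you run produces $\|h_K\|_1\lesssim\|h_J\|_1$, not $\|h_J\|_\infty\|h_K\|_1\lesssim 1$; to close you implicitly invoke $\|h_J\|_\infty\|h_J\|_1\lesssim 1$, i.e.\ standardness of $\mathscr{H}$. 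For $n=1$ standardness is automatic (a direct computation from $h_I=\sqrt{m(I)}(\1_{I_+}/\mu(I_+)-\1_{I_-}/\mu(I_-))$ gives $\|h_I\|_\infty\|h_I\|_1\leq 2$), so your necessity argument works in dimension one, but it does not as stated cover $n\geq 2$, which the theorem requires.
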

\begin{example}
    Suppose $n=1$ and $0< \mu(I)< \infty$, for every dyadic interval $I \in \D$. The Haar function associated with $I$ is 
    $$h_I(x)=\sqrt{m(I)} \bigg(\frac{\1_{I_+}(x)}{\mu(I_+)} - \frac{\1_{I_-}(x)}{\mu(I_-)} \bigg), \quad m(I):= \frac{\mu(I_+)\mu(I_-)}{\mu(I)},$$ and we have $$
\|h_I\|_{L^1(\mu)}= 2\sqrt{m(I)}, \quad \|h_I\|_\infty \sim \frac{1}{\sqrt{m(I)}}.
$$
Theorem \ref{T: LSMP} says that every shift of complexity $(0,1)$ and $(1,0)$ is weak type $(1,1)$ if and only if 
$$m(I) \sim m(\widehat{I}), \quad \text{ for all } I \in \D,$$
i.e., $\mu$ is balanced.
\end{example}

We also have the following lemma.

\begin{lemma} If $\Xi \left[\mathscr{H}, 1,0 \right]< \infty$ and $ \Xi \left[\mathscr{H}, 0,1 \right] < \infty$, then every scalar-valued Haar shift defined with respect to $\mathscr{H}$ is weak $(1,1).$ 
\end{lemma}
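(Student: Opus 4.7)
The plan is to reduce the claim to the quantitative characterization in Theorem \ref{T: LSMP}. More precisely, I will show that the two hypotheses $\Xi[\mathscr{H},1,0]<\infty$ and $\Xi[\mathscr{H},0,1]<\infty$ already force $\Xi[\mathscr{H},s,t]<\infty$ for every pair $(s,t)\in\mathbb{N}^2$. Since the ``if'' direction of Theorem \ref{T: LSMP} does not rely on non-degeneracy (the coefficient bound $|c^Q_{J,K}|\le 1$ is what actually enters in the norm estimate), this yields weak $(1,1)$ continuity for every scalar Haar shift of any complexity.

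The starting observation is the universal \emph{matching diagonal} lower bound: from $\|h_R\|_2=1$ and $1=\int h_R^2\,d\mu\le\|h_R\|_\infty\|h_R\|_1$, one obtains $\|h_R\|_\infty\|h_R\|_1\ge 1$ whenever $h_R\not\equiv 0$. Applied at the dyadic parent $\widehat R$, the two hypotheses give $\|h_R\|_\infty\|h_{\widehat R}\|_1\le C_0$ and $\|h_{\widehat R}\|_\infty\|h_R\|_1\le C_0$ with $C_0:=\max\{\Xi[\mathscr{H},1,0],\Xi[\mathscr{H},0,1]\}$; multiplying them and dividing by the lower bound $\|h_{\widehat R}\|_\infty\|h_{\widehat R}\|_1\ge 1$ produces the matching diagonal estimate
\[
\|h_R\|_\infty\|h_R\|_1 \le C_0^{\,2}\qquad \text{for every } R\in\mathcal{D}\ \text{with } h_R\not\equiv 0.
\]

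The next step is a chain-telescoping argument. For a descent $J_0\supset J_1\supset\cdots\supset J_s$ by consecutive children, $\Xi[\mathscr{H},1,0]<\infty$ gives $\|h_{J_i}\|_\infty\le C_0/\|h_{J_{i-1}}\|_1$, which combined with the pointwise reciprocal bound $1/\|h_{J_{i-1}}\|_1\le \|h_{J_{i-1}}\|_\infty$ (from the matching diagonal step) becomes $\|h_{J_i}\|_\infty\le C_0\|h_{J_{i-1}}\|_\infty$; iterating $s$ times yields $\|h_J\|_\infty\le C_0^{\,s}\|h_Q\|_\infty$ for any $J\in\mathcal{D}_s(Q)$. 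The symmetric argument using $\Xi[\mathscr{H},0,1]$ yields $\|h_K\|_1\le C_0^{\,t}\|h_Q\|_1$ for any $K\in\mathcal{D}_t(Q)$. Combining with the matching diagonal estimate at $Q$,
\[
\|h_J\|_\infty\,\|h_K\|_1\;\le\;C_0^{\,s+t}\,\|h_Q\|_\infty\|h_Q\|_1\;\le\;C_0^{\,s+t+2},
\]
so $\Xi[\mathscr{H},s,t]<\infty$, and Theorem \ref{T: LSMP} produces the desired weak $(1,1)$ bound with an explicit constant growing exponentially in the complexity.

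The only subtle point is the tacit assumption $h_{\widehat R}\not\equiv 0$ in the matching diagonal step: if this fails at some intermediate stage of the chain, the corresponding terms in the Haar shift are identically zero and can be dropped from the analysis, so the telescoping estimate still applies to all non-trivial summands and the conclusion is unaffected. I do not expect any serious obstacle beyond this routine bookkeeping, since the entire argument is purely a quantitative manipulation of the Haar-function norms combined with the black box provided by Theorem \ref{T: LSMP}.
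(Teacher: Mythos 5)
Your proof is correct and follows essentially the same strategy as the paper: reduce to showing $\Xi[\mathscr{H},s,t]<\infty$ for all $(s,t)$ and then invoke the sufficiency direction of Theorem~\ref{T: LSMP}. The paper does this by induction on $N=s+t$, inserting $1=\|h_{\widehat K}\|_{L^2(\mu)}^2\le\|h_{\widehat K}\|_{L^1(\mu)}\|h_{\widehat K}\|_{L^\infty(\mu)}$ at each step; your chain-telescoping argument unrolls exactly the same inequality along a descent of cubes, so the two presentations are logically equivalent and yield the same exponential bound $\Xi[\mathscr{H},s,t]\lesssim C_0^{s+t+2}$.
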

\begin{proof}
We proceed by induction on the total complexity $N=s+t$ of the shift; if $N=0$, the statement follows from the fact that $\mathscr{H}$ is standard. Now assume that $s+t=N \geq 1$ and $\Xi \left[\mathscr{H}, m,n \right]< \infty$ if $m+n \leq N-1$. Either $t \geq 1$ or $s \geq 1$, so assume the first case. For any $J \in \D_s(Q)$ and $K \in \D_t(Q)$, we have that $\widehat{K} \in \D_{t-1}(Q)$ and using the inductive hypothesis 
\begin{align*}
    \|h_J\|_{L^\infty(\mu)} \|h_K\|_{L^1(\mu)} = &\|h_J\|_{L^\infty(\mu)} \|h_{\widehat{K}}\|_{L^2(\mu)}^2\|h_K\|_{L^1(\mu)}  \\ \leq & \|h_J\|_{L^\infty(\mu)} \|h_{\widehat{K}}\|_{L^1(\mu)}\|h_{\widehat{K}}\|_{L^\infty(\mu)}\|h_K\|_{L^1(\mu)} \\ \leq & \Xi \left[\mathscr{H}, s,t-1 \right] \Xi \left[\mathscr{H}, 0,1 \right] < \infty. 
\end{align*} 
In particular, for every $N$ and $s+t \leq N$, we get that $\Xi \left[\mathscr{H}, s,t \right] < \infty $. From Theorem \ref{T: LSMP}, that implies weak $(1,1)$ boundedness for Haar shifts of any complexity, since $N$ is arbitrary.
\end{proof}

Given a pair $(\mu, \mathscr{H})$, define the quantities 
\begin{equation}\label{def:m(Q)}
    m(Q)= m_{\mu, \mathscr{H}}(Q) := \|h_Q\|_{L^1(\mu)}^2.
\end{equation}

Up to a constant factor, this agrees with the definition of $m$ given in the $n=1$ case.   

\begin{definition} \label{def of balanced pairs}
    We say that a pair $(\mu, \mathscr{H})$ is balanced if $\mathscr{H}$ is standard, i.e. $\Xi \left[\mathscr{H}, 0,0 \right] <\infty$, and
    \begin{equation} \label{balanced_higher_dim}
    m(Q) \sim m(\widehat{Q}), \quad \text{ for every $Q \in \D$}
    \end{equation}
\end{definition}

 The following proposition collects some relevant properties of $m$ and of balanced pairs $(\mu, \mathscr{H})$.

\begin{proposition}\label{Prop: propertiesofm}
    Let $\mathscr{H}=\{h_{Q}\}_{Q\in \mathcal{D}}$ be a generalized Haar system in $\R^n$ equipped with a Borel measure $\mu$, and let $m(Q)$ as in (\ref{def:m(Q)}). Then
\begin{enumerate}
    \item $m(Q)\leq \mu(Q)$ for all $Q\in \mathcal{D}$.\label{part1prop}
    \item If $\mathscr{H}$ is standard, then $\|h_Q\|_{L^{\infty}(\mu)}\sim \frac{1}{\sqrt{m(Q)}}$, for all $Q\in \mathcal{D}$.\label{part2prop}
    \item If $(\mu,\mathscr{H})$ is balanced, then one has that $$m(Q)\sim \min\{\mu(R)\colon R\in \textup{ch}(Q)\},$$
    with implicitly constant depending on $n,\Xi[\mathscr{H},1, 0] \text{ and }\Xi[\mathscr{H},0, 1]$.\label{part3prop}
    \item $(\mu,\mathscr{H})$ is balanced if and only if \[\Xi \left[\mathscr{H}, 1,0 \right]< \infty \quad \text{ and } \quad \Xi \left[\mathscr{H}, 0,1 \right] < \infty.\] \label{part4prop}
    
\end{enumerate}
\end{proposition}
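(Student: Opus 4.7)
The plan is to establish the four parts in sequence, with (3) and (4) building on (1)--(2).

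Part (1) is a direct application of Cauchy--Schwarz against the constant function $1$: since $\|h_Q\|_{L^2(\mu)} = 1$, one has $m(Q) = \|h_Q\|_{L^1(\mu)}^2 \leq \mu(Q)\,\|h_Q\|_{L^2(\mu)}^2 = \mu(Q)$.

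Part (2) reduces to algebra once $\|h_Q\|_{L^1(\mu)} = \sqrt{m(Q)}$ is recognized. The standardness hypothesis $\Xi[\mathscr{H}, 0, 0] < \infty$ gives the upper bound $\|h_Q\|_{L^\infty(\mu)} \lesssim 1/\sqrt{m(Q)}$ directly, and the matching lower bound follows from $1 = \|h_Q\|_{L^2(\mu)}^2 \leq \|h_Q\|_{L^\infty(\mu)} \|h_Q\|_{L^1(\mu)}$, i.e.\ from interpolating $L^2$ between $L^1$ and $L^\infty$.

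Part (3) is the most substantive. Writing $h_Q = \sum_{R \in \textup{ch}(Q)} \alpha_R \1_R$, one has $\sum_R \alpha_R^2 \mu(R) = 1$ and $\sqrt{m(Q)} = \sum_R |\alpha_R|\mu(R)$. For the $\min_R \mu(R) \lesssim m(Q)$ direction, I pick out by pigeonhole a child $R_\star$ with $\alpha_{R_\star}^2 \mu(R_\star) \geq 2^{-n}$; inserting this into the expression for $\|h_Q\|_{L^1(\mu)}$ yields $\sqrt{m(Q)} \geq |\alpha_{R_\star}|\mu(R_\star) \geq 2^{-n/2}\sqrt{\mu(R_\star)}$, whence $\mu(R_\star) \leq 2^n m(Q)$. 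The reverse inequality $m(Q) \lesssim \min_R \mu(R)$ is then immediate by combining part (1) with the balanced condition: for any child $R$, $m(Q) \sim m(R) \leq \mu(R)$. Notice that the balanced hypothesis is used only in this second step; the pigeonhole direction holds for any generalized Haar system.

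Part (4), the equivalence, is a bookkeeping exercise once part (2) and the universal lower bound $\|h_Q\|_{L^\infty(\mu)} \geq 1/\sqrt{m(Q)}$ are in hand. For the forward implication, balancedness plus part (2) gives $\|h_J\|_{L^\infty(\mu)} \sqrt{m(Q)} \sim \sqrt{m(Q)/m(J)}$ for $J \in \textup{ch}(Q)$, which is bounded by the balanced condition, hence $\Xi[\mathscr{H}, 1, 0] < \infty$; symmetrically $\Xi[\mathscr{H}, 0, 1] < \infty$. For the converse, I combine $\|h_Q\|_{L^\infty(\mu)} \geq 1/\sqrt{m(Q)}$ with the finiteness of the two $\Xi$'s to recover $m(Q) \sim m(R)$ for any child $R$. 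Standardness of $\mathscr{H}$, which is part of the definition of a balanced pair, then drops out for free from $\|h_Q\|_{L^\infty(\mu)}\sqrt{m(Q)} \lesssim \|h_Q\|_{L^\infty(\mu)}\sqrt{m(K)} \leq \Xi[\mathscr{H}, 0, 1]$, using the comparability $m(Q) \sim m(K)$ just established.

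The main obstacle is the $\min_R \mu(R) \lesssim m(Q)$ half of part (3): it does not reduce to any of the other parts and requires the pigeonhole observation on the coefficients $\alpha_R$ supplied by $L^2$-normalization. Everything else is a clean manipulation of Cauchy--Schwarz/Hölder together with the defining relations $\|h_Q\|_{L^2(\mu)} = 1$ and $\int h_Q\,d\mu = 0$.
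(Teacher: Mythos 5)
Your proof is correct and follows essentially the same approach as the paper. Parts (1), (2), and the forward direction of (4) are identical; in the hard half of part (3) you isolate a single dominant child by pigeonhole from $\sum_R \alpha_R^2\mu(R)=1$, whereas the paper sums all the bounds $\alpha_R^2\mu(R)\leq m(Q)/\mu(R)$ over the $2^n$ children, and in the converse of (4) you deduce $m(Q)\sim m(R)$ before standardness where the paper reverses the order — both are cosmetic variants of the same underlying estimates.
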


\begin{remark} \label{rmk on m}
Observe that claim (\ref{part3prop}) in the proposition tells us that, given two Haar systems $\mathscr{H}$ and $\widetilde{\mathscr{H}}$ such that $(\mu, \mathscr{H})$ and $(\mu, \widetilde{\mathscr{H}})$ are balanced pairs, we have that 
\[ m_{\mu, \mathscr{H}}(Q) \sim m_{\mu, \widetilde{\mathscr{H}}}(Q), \quad \text{ for every } Q \in \D. \]
\end{remark}

\begin{proof}

 Claim (\ref{part1prop}) follows easily from the definition of $m$ and the $L^2$ normalization,
    \begin{equation}\label{E: misboundedbymeasure} \sqrt{m(Q)} = \|h_Q\|_{L^1(\mu)} \leq \sqrt{\mu(Q)}\|h_Q\|_{L^2(\mu)} = \sqrt{\mu(Q)}.\end{equation}

    Claim (\ref{part2prop}) is also immediate from the chain of inequalities:
    $$1=\|h_Q\|_{L^2(\mu)}^{2}\leq \|h_Q\|_{L^{1}(\mu)}\|h_{Q}\|_{L^{\infty}(\mu)}=\sqrt{m(Q)}\|h_{Q}\|_{L^{\infty}(\mu)}\leq \Xi[\mathscr{H},0,0].$$

    To prove (\ref{part3prop}), we start by observing that from the definitions of $m(Q)$ and standard Haar system we know that for any $Q\in \mathcal{D}$,
    \begin{equation}\label{E: coefficientcontrolbym(Q)}
     m(Q)^{1/2}=\|h_Q\|_{L^1(\mu)}=\sum_{R\in \text{ch}(Q)}|\alpha_R| \mu(R)
    \quad \Rightarrow \quad |\alpha_R| \mu(R)\leq \sqrt{m(Q)}\text{ for all }R\in \text{ch}(Q).
    \end{equation}

     From that we have
     \begin{align*}
    1=\|h_Q\|_{L^2(\mu)}^2=\sum_{R\in \text{ch}(Q)}|\alpha_R|^2 \mu(R)
    \leq & \,m(Q)\,\sum_{R\in \text{ch}(Q)} \frac{1}{\mu(R)}\\
    \leq & 2^n m(Q)\left(\min\{\mu(R)\colon R\in \text{ch}(Q)\}\right)^{-1},
    \end{align*}
    which guarantees $\min \{\mu(R)\colon R\in \text{ch}(Q)\}\lesssim_n m(Q).$ For the other inequality, we use the balanced condition and (\ref{E: misboundedbymeasure}) to get
    $$m(Q)\lesssim m(R)\leq \mu(R)\text{ for all }R\in \text{ch}(Q). $$
    
     Next, we prove (\ref{part4prop}). If $(\mu,\mathscr{H})$ is balanced the for any cube $Q$

     \[\|h_{\widehat{Q}}\|_{L^\infty(\mu)}\|h_Q\|_{L^1(\mu)} \leq \Xi[\mathscr{H},0, 0]\frac{\|h_Q\|_{L^1(\mu)}}{\|h_{\widehat{Q}}\|_{L^1(\mu)}} = \Xi[\mathscr{H},0, 0]\sqrt{\frac{m(Q)}{m(\widehat{Q})}} \lesssim 1.\]
    This proves that $\Xi[\mathscr{H},0,1] < \infty$. Similarly, we can show that $\Xi[\mathscr{H},1,0] < \infty$.

    Conversely, suppose that $\Xi \left[\mathscr{H}, 1,0 \right]< \infty$ and $\Xi \left[\mathscr{H}, 0,1 \right] < \infty$. Then $\mathscr{H}$ is standard as 

    \[\|h_Q\|_{L^\infty(\mu)}\|h_Q\|_{L^1(\mu)} 
    \leq \Xi[\mathscr{H},1, 0]\Xi[\mathscr{H},0, 1].\]

    Also, we have that
    \[1 = \|h_Q\|_{L^2}^2 \leq \|h_Q\|_{L^\infty(\mu)}\|h_Q\|_{L^1(\mu)} \leq \Xi[\mathscr{H},1, 0] \frac{\|h_Q\|_{L^1(\mu)}}{\|h_{\widehat{Q}}\|_{L^1(\mu)}} = \Xi[\mathscr{H}, 1, 0] \frac{\sqrt{m(Q)}}{\sqrt{m(\widehat{Q})}}.\]
    
    Arguing similarly with $\Xi[\mathscr{H},0, 1]$, we have 
    \[\Xi[\mathscr{H}, 0, 1]^{-1} \leq \sqrt{\frac{m(\widehat{Q})}{m(Q)}} \leq \Xi[\mathscr{H},1, 0].\]
    Hence, $(\mu, \mathscr{H})$ is balanced and this concludes the proof of (\ref{part4prop}).

\end{proof}
\begin{remark} We make some comments on the previous statements.
 \begin{enumerate}
     \item  We defined a balanced pair $(\mu, \mathscr{H})$ as a standard Haar system $\mathscr{H}$ such that $m(Q) \sim m(\widehat{Q})$ for any $Q \in \D$. In light of \cref{Prop: propertiesofm}, if this holds then any scalar shift defined with respect to $\mathscr{H}$ is weak $(1,1)$ with respect to $\mu$.
    This is coherent with the one-dimensional case or with the simpler case of two-valued Haar systems in $\R^n$ (see \cite{LSMP}, Remark 2.13), for which the statement $\Xi[\mathscr{H}, 0, 0] < \infty$ in the definition of balanced is superfluous, but also allows to consider more general systems like wavelet Haar systems. 
    \item In higher dimensions the balanced condition crucially depends both on the measure and on the choice of the Haar system. Indeed, as proved in \cite{LSMP}, given a product measure $d \mu(x,y)= dx d\nu(y)$ such that $\nu$ is balanced but not dyadically doubling, one can build two Haar systems $\mathscr{H}$ and $\widetilde{\mathscr{H}}$ such that $(\mu, \mathscr{H})$ is balanced but $(\mu, \widetilde{\mathscr{H}})$ is not.
 \end{enumerate}   
\end{remark}

We conclude with a brief discussion on a subclass of Haar shifts. Suppose $T$ is a shift of complexity $(s,t)$ as in \eqref{D: dyadic shifts}. If $\mu$ is doubling, it is easy to see that the kernels $K_Q$ of $T_Q$ verify
$$\|K_Q\|_{\infty} \lesssim_{\mu,s,t} \frac{1}{\mu(Q)}, \quad \text{ for any $Q \in \D $.}$$
where the implicit constant depends exponentially on $s$ and $t$. This is not true in general if the measure is non-doubling. Indeed, even if the measure is balanced, all we can say is 
$$\|K_Q\|_{\infty} \lesssim_{\mu,s,t} \frac{1}{m(Q)}, \quad \text{ for any $Q \in \D $,}$$
and one can have $m(Q) \lll \mu(Q)$.

\begin{definition}\label{D: L^1 normalized}
Given a Radon measure $\mu$ on $\R^n$ such that $0< \mu(Q) < \infty$, we say a Haar shift $Tf(x)= \sum_{Q \in \D}T_Q f(x)$ defined as in \eqref{D: dyadic shifts} is $L^1$ normalized if \begin{equation} \label{E: L1 norm}\|K_Q\|_{\infty}  \lesssim_{\mu} \frac{1}{\mu(Q)}, \quad \text{ for any $Q \in \D$}.\end{equation}
\end{definition}
\begin{remark}
The normalization \eqref{E: L1 norm}, where the implicit constant does not depend on the complexity, usually holds for Haar shifts arising from representation theorems of Calder\'on-Zygmund operators in the doubling setting (see \cite{HPTV}).  The worse behavior of the kernels $K_Q$ seems to essentially be the reason of the failure of usual sparse domination for general Haar shifts, as we will highlight in the next sections.
\end{remark}
\subsection{Sparse families and sparse operators} 
We recall some basic facts about sparse families. 

\begin{definition}
    Let $\mc{S} \subset \mc{D}$ be a family of dyadic cubes.
    \begin{enumerate}
        \item Let $0 < \eta < 1$. We say that $\mc{S}$ is $\eta$-sparse if for each $Q \in \mc{S}$, there exists some Borel set $E_Q \subset Q$ so that $\mu(E_Q) \geq \eta \, \mu(Q)$ and the collection $\{E_Q\}_{Q \in \mc{S}}$ is pairwise disjoint. 
        \item Let $\Lambda > 0$. We say that $\mc{S}$ is $\Lambda$-Carleson if for every sub-collection $\mc{S}' \subset \mc{S}$, we have 
        \[\sum_{Q \in S'}\mu(Q) \leq \Lambda \,\mu\left(\bigcup_{Q \in \mc{S}'}Q\right)\]
    \end{enumerate}
\end{definition}

It was shown in \cite{Hanninen} that if the measure $\mu$ has no point masses, then $\mc{S}$ is $\eta$-sparse if and only if $\mc{S}$ is $\eta^{-1}$-Carleson. \par
Crucial to our study of dyadic operators is the convex set-valued sparse operator
\[\mc{L}_{\mc{S}}f(x) = \sum_{Q \in \mc{S}}\dang{f}_Q\1_{Q}(x)\]
and the associated sparse form
\[\mc{A}_{\mc{S}}(f, g) := \sum_{Q \in \mc{S}}\dashint_{Q}\dashint_{Q}|f(x) \cdot g(y)| \, d\mu(x) \, d\mu(y)\mu(Q)\]
defined for vector-valued functions $f$ and $g$. \par
The following lemma tells us that the pointwise operator can be controlled by the sparse form. For a symmetric, closed, and convex set $\mc{C}$, and $v \in \R^d$, we write
\[\mc{C} \cdot v = \{x \cdot v : x \in \mc{C}\}\]
which is a symmetric closed interval in $\R$. We write $|\mc{C} \cdot v|$ for the right endpoint of this interval. 

\begin{lemma}\label{lem:pointwise_to_form}
    Let $f$ and $g$ be vector-valued locally integrable functions. Then 
    \[\int_{\R^n}|\mc{L}_{\mc{S}}f(y) \cdot g(y)| \, d\mu(y) \leq \mc{A}_{\mc{S}}(f, g).\]
\end{lemma}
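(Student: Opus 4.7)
My plan is to unpack both sides and show the inequality follows from two observations: the subadditivity of the support function over Minkowski sums, and an explicit duality computation for $|\dang{f}_Q \cdot g(y)|$. In fact this approach yields equality, not just the stated inequality.

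First, I would record the support function identity: for a symmetric compact convex set $\mc{C}\subset\R^d$ and $v\in\R^d$, the paper's definition makes $\mc{C}\cdot v$ a symmetric interval in $\R$, so its right endpoint is $|\mc{C}\cdot v|=\sup_{c\in\mc{C}}c\cdot v$, i.e., the support function of $\mc{C}$ at $v$. Interpreting $\mc{L}_{\mc{S}}f(y)=\sum_{Q\in\mc{S}}\dang{f}_Q\1_Q(y)$ as the (possibly infinite) Minkowski sum of symmetric convex bodies, subadditivity of the support function gives, for each $y\in\R^n$,
\[
|\mc{L}_{\mc{S}}f(y)\cdot g(y)|\;\leq\;\sum_{Q\in\mc{S}}\1_Q(y)\,|\dang{f}_Q\cdot g(y)|.
\]
Second, I would evaluate each $|\dang{f}_Q\cdot g(y)|$ by duality with $L^\infty$. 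Using the definition of the convex body average,
\begin{align*}
|\dang{f}_Q\cdot g(y)| \;=\; \sup_{\|\psi\|_\infty\leq 1}\LL f\psi\RR_Q\cdot g(y)
&= \sup_{\|\psi\|_\infty\leq 1}\frac{1}{\mu(Q)}\int_Q(f(x)\cdot g(y))\,\psi(x)\,d\mu(x)\\
&= \frac{1}{\mu(Q)}\int_Q|f(x)\cdot g(y)|\,d\mu(x),
\end{align*}
the last identity being attained by the choice $\psi(x)=\operatorname{sign}(f(x)\cdot g(y))$.

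Combining these two displays and integrating over $y$ by Tonelli (all integrands being nonnegative) yields
\begin{align*}
\int_{\R^n}|\mc{L}_{\mc{S}}f(y)\cdot g(y)|\,d\mu(y)
&\leq \sum_{Q\in\mc{S}}\int_Q\frac{1}{\mu(Q)}\int_Q|f(x)\cdot g(y)|\,d\mu(x)\,d\mu(y)\\
&= \sum_{Q\in\mc{S}}\mu(Q)\dashint_Q\dashint_Q|f(x)\cdot g(y)|\,d\mu(x)\,d\mu(y)
= \mc{A}_{\mc{S}}(f,g).
\end{align*}
The only delicate point, which I would flag as the main obstacle, is making sense of the possibly infinite Minkowski sum defining $\mc{L}_{\mc{S}}f(y)$ when $y$ lies in infinitely many cubes of $\mc{S}$. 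This is harmless for the stated inequality: since $0\in\dang{f}_Q$ for every $Q$, each term in $\sum_Q\1_Q(y)|\dang{f}_Q\cdot g(y)|$ is nonnegative, so subadditivity of the support function over arbitrary families follows by exhausting $\mc{S}$ by finite subfamilies and taking a monotone limit. Beyond that point, the proof is a direct unpacking of definitions.
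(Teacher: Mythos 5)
Your proof is correct and follows essentially the same route as the paper's: both bound $|\mc{L}_{\mc{S}}f(y)\cdot g(y)|$ termwise by $\sum_{Q}\1_Q(y)\,|\dang{f}_Q\cdot g(y)|$, evaluate each term via the $L^\infty$ duality in the definition of the convex body average, and integrate in $y$. Your observation that the duality step actually yields equality (by choosing $\psi=\operatorname{sign}(f(\cdot)\cdot g(y))$), and your explicit handling of the possibly infinite sum, are accurate refinements but do not change the substance of the argument.
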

\begin{proof}
    Observe that 
    \[\dang{f}_{Q} \cdot g(y) = \{\ang{\varphi f \cdot g(y)}_Q : \|\varphi\|_{L^\infty} \leq 1\}.\]
    This is a symmetric interval in $\R$. For any element in this interval, we have 
    \begin{align*}
        |\ang{\varphi f \cdot g(y)}_Q| & = \left|\dashint_Q \varphi(x)f(x) \cdot g(y) \, d\mu(x)\right| \\
        & \leq \dashint_Q |f(x) \cdot g(y)| \, d\mu(x).
    \end{align*}
    It follows that 
    \begin{align*}
        |\mc{L}_{\mc{S}}f(y) \cdot g(y)| & \leq \sum_{Q \in \mc{S}}|\dang{f}_Q \cdot g(y)|\1_Q(y) \\
        & \leq \sum_{Q \in \mc{S}}\dashint_Q |f(x) \cdot g(y)| \, d\mu(x)\1_Q(y)
    \end{align*}
    By integrating this inequality in the variable $y$, we conclude the proof of the lemma.
\end{proof}

As an immediate consequence of these manipulations, if $T$ is an operator that admits the convex body domination
\[Tf(x) \in C\mc{L}_{\mc{S}}f(x)\]
(where $\mc{S}$ is a sparse family that could depend on $f$, but whose sparse parameter is independent of $f$), we obtain
\[|\ang{Tf, g}| \lesssim \mc{A}_{\mc{S}}(f, g).\]
By this observation, weighted estimates for operators $T$ that admit vector sparse domination follow from weighted estimates for the sparse form $\mc{A}_{\mc{S}}$ as in \cref{MatrixWeightedThm}. See \cref{section 4} for more details. 

\subsection{Matrix weights}
We introduce some known facts about matrix weights, and some results about modified weigh classes. Let $\mathcal{M}_{d \times d}^{+}$ denote the space of $d \times d$ positive semi-definite matrices with real entries. A matrix weight $W$ is a locally integrable function $W: \R^n \rightarrow \mathcal{M}_{d \times d}^{+}$. 
Given a matrix weight $W$ and $x \in \R^n$, we define the operator norm of $W(x)$ by \[\bm{|}W(x) \bm{|}= \sup_{\substack{e \in \R^d \\ |e|=1}} |W(x)e| \]
The Lebesgue space $L^p(W)$ for $1 \leq p<\infty$ is given by the set of measurable, vector-valued functions $f: \R^n \rightarrow \R^d$ whose following norm is finite:

$$\|f\|_{L^p(W)}:= \left( \int_{\R^n} |W^{1/p}(x)f(x)|^p \, d \mu(x) \right)^{1/p}.$$
\begin{definition}
For $1 < p < \infty$, and $W,V$ matrix weights, the matrix $A_p$ constant is
\[[W, V]_{A_p} = \sup_{Q \in \D}\dashint_{Q}\left(\dashint_{Q}\bm{|}W(x)^{1/p}V(y)^{1/p'}\bm{|}^{p'} \, d\mu(y)\right)^{p / p'}d\mu(x).\]
In the single weight case, we have $V= W^{-p' / p}$ so that 
\[[W]_{A_p} := \sup_{Q \in \D}\dashint_{Q}\left(\dashint_{Q}\bm{|}W(x)^{1/p}W(y)^{-1/p}\bm{|}^{p'} \, d\mu(y)\right)^{p / p'}d\mu(x).\]
\end{definition}
\begin{definition}
For a matrix weight $W$, the scalar $A_{p, \infty}$ constant is
\[[W]_{A_{p, \infty}^{sc}} := \sup_{e  \in \R^d}[|W^{1/p}e|^p]_{A_\infty},\] where for a scalar weight $w$, the $A_\infty$ constant is defined as 
\[[w]_{A_\infty} := \sup_{Q \in \D}\frac{1}{w(Q)}\int_{Q}M^\D_Q(w \chi_Q)(x) \, d\mu(x)\]
and $M^\D_Qf(x) := \sup_{x \in Q' \in \mc{D}(Q)}\ang{|f|}_{Q'}$ is the dyadic maximal function.
\end{definition}
 We collect some known facts about matrix weights, that will be used in \cref{section 4} and \cref{ModifiedClass}. We refer to \cite{Goldberg2003} and \cite{cruzuribe2017} for more details. We remark that although these results are stated for the special case when $\mu$ is the Lebesgue measure on $\R^d$, their proofs apply verbatim to any Borel measure $\mu$ as considered in our setting.

\begin{proposition}\label{prop:weight_duality} For any $1< p <\infty$ and for every matrix weight $W \in A_p$ and , we have that
 \[[W]_{A_{p, \infty}^{sc}} \leq[W]_{A_p}.\]
 Also, we have that $W^{-\frac{p'}{p}} \in A_{p'}$ and 
 \begin{equation}\label{p-p' relation}
 [W]_{A_p}^{\frac{1}{p}} \sim [W^{-\frac{p'}{p}}]_{A_{p'}}^{\frac{1}{p'}}.    
 \end{equation}
\end{proposition}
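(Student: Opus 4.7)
My plan is to treat the two claims separately. For the first inequality $[W]_{A_{p,\infty}^{sc}} \leq [W]_{A_p}$, I fix a unit vector $e \in \R^d$ and analyze the scalar weight $w_e(x) := |W^{1/p}(x) e|^p$, showing that the matrix $A_p$ condition for $W$ implies the scalar $A_p$ condition for $w_e$ with constant $[w_e]_{A_p} \leq [W]_{A_p}$. The key observation is that for $\mu$-a.e.\ $y$ with $W^{1/p}(y) e \neq 0$, testing the operator norm against the unit vector $f := W^{1/p}(y) e / |W^{1/p}(y) e|$ (which satisfies $W^{-1/p}(y) f = e/|W^{1/p}(y)e|$) yields the pointwise bound
\[\bv{W^{1/p}(x) W^{-1/p}(y)}^{p'} \;\geq\; \frac{|W^{1/p}(x) e|^{p'}}{|W^{1/p}(y)e|^{p'}} = \frac{w_e(x)^{p'/p}}{w_e(y)^{p'/p}}.\]
Integrating in $y$, raising to the power $p/p' = p-1$, and integrating in $x$ gives $\ang{w_e}_Q \ang{w_e^{-1/(p-1)}}_Q^{p-1} \leq [W]_{A_p}$ for every $Q \in \D$, hence $[w_e]_{A_p} \leq [W]_{A_p}$. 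Invoking the classical scalar Fujii--Wilson inequality $[w]_{A_\infty} \lesssim [w]_{A_p}$ (which follows from the boundedness of the dyadic maximal function on $L^p(w)$) and taking the supremum over unit vectors $e$ finishes the first claim.

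For the duality claim $[W^{-p'/p}]_{A_{p'}}^{1/p'} \sim [W]_{A_p}^{1/p}$, I would pass to reducing operators. For each cube $Q \in \D$, introduce constant symmetric positive semidefinite matrices $\mc{W}_Q, \mc{V}_Q$ satisfying, up to constants depending only on $d$,
\[|\mc{W}_Q e|^p \sim_d \dashint_Q |W^{1/p}(y) e|^p \, d\mu(y), \qquad |\mc{V}_Q e|^{p'} \sim_d \dashint_Q |W^{-1/p}(y) e|^{p'} \, d\mu(y), \quad e \in \R^d,\]
which exist by the standard John-ellipsoid construction applied to the convex bodies $\{e : \dashint_Q |W^{1/p}(y) e|^p d\mu \leq 1\}$ and its $p'$-analog. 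A short computation (interchanging averages with the supremum implicit in the operator norm up to dimensional factors) then identifies
\[[W]_{A_p} \sim_d \sup_{Q \in \D} \bv{\mc{W}_Q \mc{V}_Q}^p, \qquad [W^{-p'/p}]_{A_{p'}} \sim_d \sup_{Q \in \D} \bv{\mc{V}_Q \mc{W}_Q}^{p'},\]
where the second identification uses that $(W^{-p'/p})^{1/p'} = W^{-1/p}$ and $(W^{-p'/p})^{-1/p'} = W^{1/p}$, so the reducing operators of the dual weight are simply $\mc{V}_Q$ and $\mc{W}_Q$ with roles swapped. Finally, since $\mc{W}_Q$ and $\mc{V}_Q$ are symmetric, the transposition identity $\bv{AB} = \bv{B^* A^*}$ forces $\bv{\mc{W}_Q \mc{V}_Q} = \bv{\mc{V}_Q \mc{W}_Q}$, and the equivalence follows.

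The main technical obstacle lies in the passage from the mixed-norm double integral defining the matrix $A_p$ characteristic to the single product $\bv{\mc{W}_Q \mc{V}_Q}^p$ of norms of reducing operators. This step requires exchanging the average in the inner variable with the supremum hidden inside the operator norm, which costs factors depending on $d$ via the comparability $\bv{M}^{p'} \sim_d \sum_{i=1}^d |M e_i|^{p'}$ valid for any orthonormal basis $\{e_i\}$. This is precisely why the duality statement is only up to $\sim$, in sharp contrast to the exact scalar identity $[w]_{A_p}^{1/p} = [w^{1-p'}]_{A_{p'}}^{1/p'}$, where the one-dimensional case of the operator norm eliminates any such exchange.
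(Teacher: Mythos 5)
The paper does not actually prove Proposition~\ref{prop:weight_duality}: it cites \cite{Goldberg2003} and \cite{cruzuribe2017} and remarks that those proofs transfer verbatim to a general Borel measure. Your proposal supplies a self-contained argument following the same well-trodden route as those references, and I believe it is correct in substance. Two small remarks. First, the pointwise reduction $\bv{W^{1/p}(x)W^{-1/p}(y)}^{p'} \geq w_e(x)^{p'/p}/w_e(y)^{p'/p}$ and the consequent bound $[w_e]_{A_p} \leq [W]_{A_p}$ are exactly right, but appealing to the generic Fujii--Wilson inequality $[w]_{A_\infty}\lesssim[w]_{A_p}$ costs you a constant, whereas the statement as written has $\leq$. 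In the dyadic setting the constant-$1$ bound is in fact elementary: with $\sigma := w_e^{1-p'}$, for $\mu$-a.e.\ $x\in Q$ one has
\[
M^\D_Q(w_e\1_Q)(x) \leq [w_e]_{A_p}\inf_{x\in Q'\in\D(Q)}\ang{\sigma}_{Q'}^{1-p} \leq [w_e]_{A_p}\,\sigma(x)^{1-p} = [w_e]_{A_p}\,w_e(x),
\]
where the second inequality uses that $1-p<0$ together with martingale (Lebesgue) differentiation, $\sup_{x\in Q'\in\D(Q)}\ang{\sigma}_{Q'}\geq\sigma(x)$ a.e.; integrating in $x$ and dividing by $w_e(Q)$ gives $[w_e]_{A_\infty}\leq[w_e]_{A_p}$ exactly, with no maximal-function theorem needed (your parenthetical attribution to $L^p(w)$-boundedness of the maximal function is not the natural route and isn't used). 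Second, the reducing-operator derivation of the duality relation \eqref{p-p' relation} is the standard one and is correct: the comparison $[W]_{A_p}\sim_d\sup_Q\bv{\mc{W}_Q\mc{V}_Q}^p$ is essentially \eqref{prop reducing op}, the observation that swapping $W\mapsto W^{-p'/p}$ and $p\mapsto p'$ exchanges $\mc{W}_Q$ and $\mc{V}_Q$ is right, and the adjoint identity $\bv{\mc{W}_Q\mc{V}_Q}=\bv{\mc{V}_Q\mc{W}_Q}$ for symmetric positive matrices closes the loop. The dimensional factors you flag as unavoidable at the exchange-of-averages step are indeed why the second claim is only $\sim$.
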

Now we introduce the reducing operators and related facts.
\begin{proposition} \label{reducing operators}
For any $1<p<\infty$, any $Q \in  \D$ and any matrix weight $W$, there exist $\mathcal{W}_{p, Q} \in \mathcal{M}_{d \times d}^{+}$, called a reducing operator, satisfying for each vector $e \in \R^d$:
$$ |\mathcal{W}_{p,Q} e |^p \sim_d \dashint_{Q} |W^{1/p}(x)e |^p \, d\mu(x).$$   
Moreover, given two matrix weights $W,V$, if $\mc{V}_{p',Q}$ denotes the reducing operator 
$$ |\mathcal{V}_{p',Q} e |^{p'} \sim_d \dashint_{Q} |V^{1/p'}(x)e |^{p'} \, d\mu(x).$$  
we have that 
\begin{equation} \label{prop reducing op}
\bv{\mc{W}_{p,Q}\mc{V}_{p',Q}} \lesssim_d [W, V]_{A_p}^{1/p}.   
\end{equation}
\end{proposition}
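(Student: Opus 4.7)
The plan is to treat the two halves of the statement separately. For the existence and defining property of $\mc{W}_{p,Q}$, I would consider the functional
\[ N_Q(e) := \left(\dashint_Q |W^{1/p}(x)e|^p\, d\mu(x)\right)^{1/p} \]
on $\R^d$. Since $W(x)\in\mathcal{M}_{d\times d}^{+}$ and by Minkowski's inequality, $N_Q$ is a seminorm on $\R^d$ (a genuine norm after quotienting out its null space, or after regularizing $W$ by $W + \epsilon I$ and sending $\epsilon\to 0^+$). Its unit ball $K := \{e : N_Q(e)\leq 1\}$ is a centrally symmetric convex body, so John's ellipsoid theorem furnishes an inner-product norm $e\mapsto |\mc{W}_{p,Q} e|$ with $\mc{W}_{p,Q}\in \mathcal{M}_{d\times d}^{+}$ such that
\[|\mc{W}_{p,Q} e| \leq N_Q(e) \leq \sqrt{d}\,|\mc{W}_{p,Q}e| \quad \text{for all } e \in \R^d.\]
Raising to the $p$-th power yields the reducing-operator identity, with implicit constants depending only on $d$.

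For the mixed $A_p$ bound \eqref{prop reducing op}, I would proceed in three short steps. First, apply the reducing property to a unit vector $e$ and push $\sup_{|e|=1}$ inside the integral:
\[\bv{\mc{W}_{p,Q}\mc{V}_{p',Q}}^p \sim_d \sup_{|e|=1}\dashint_Q |W^{1/p}(x)\mc{V}_{p',Q}e|^p\,d\mu(x) \leq \dashint_Q \bv{W^{1/p}(x)\mc{V}_{p',Q}}^p\,d\mu(x).\]
Second, exploit the self-adjointness of $W^{1/p}(x)$ and $\mc{V}_{p',Q}$ to rewrite $\bv{W^{1/p}(x)\mc{V}_{p',Q}} = \bv{\mc{V}_{p',Q} W^{1/p}(x)} = \sup_{|w|=1}|\mc{V}_{p',Q} W^{1/p}(x)w|$; then apply the reducing property for $\mc{V}_{p',Q}$ (with exponent $p'$) and once more move the supremum past the $y$-integral to get
\[\bv{W^{1/p}(x)\mc{V}_{p',Q}}^{p'} \lesssim_d \dashint_Q \bv{W^{1/p}(x) V^{1/p'}(y)}^{p'}\,d\mu(y).\]
Third, raise this to the power $p/p'$ and integrate in $x\in Q$; the resulting right-hand side is exactly $[W,V]_{A_p}$ by definition, and taking $p$-th roots yields \eqref{prop reducing op}.

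I expect the main technical point to be the transposition identity $\bv{W^{1/p}(x)\mc{V}_{p',Q}} = \sup_{|w|=1}|\mc{V}_{p',Q} W^{1/p}(x)w|$, whose cleanness relies crucially on both factors being self-adjoint; once one has this, the rest is just an orderly chaining of the reducing-operator property on each side of the product, with two harmless Jensen-type exchanges of sup and integral. A minor caveat is the possible degeneracy of $W$ or $V$ on $Q$, which is standard to handle either by first quotienting by the null space of $N_Q$ before invoking John's theorem, or by the $\epsilon I$-regularization mentioned above, since every bound is uniform in the weights.
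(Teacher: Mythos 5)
Your proof is correct. Note that the paper does not actually give a proof of this proposition: it is stated as a known fact with references to \cite{Goldberg2003} and \cite{cruzuribe2017} together with the remark that those proofs carry over verbatim to general Borel measures. Your argument is, in substance, the standard one from those references: existence of the reducing operator via John's theorem applied to the unit ball of the averaged norm $e\mapsto(\dashint_Q|W^{1/p}e|^p\,d\mu)^{1/p}$, and the bound \eqref{prop reducing op} by two applications of the reducing property, one in $e$ and one in the dummy vector $w$, each followed by passing the supremum inside the integral, with the self-adjointness of $W^{1/p}(x)$, $V^{1/p'}(y)$, and the reducing operators used to transpose products so that the correct exponent falls on the correct factor. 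The only cosmetic point worth flagging is that the implicit constant in \eqref{prop reducing op} picks up a (harmless, uniform-in-the-weight) dependence on $p$ when you raise the $\sim_d$ equivalence to the power $p/p'$; the paper writes $\lesssim_d$ but silently also allows dependence on $p$, which is fixed throughout.
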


\begin{proposition} \cite[Proposition 2.1]{Goldberg2003} \label{ExpOp}
Let $\mathbb{E}_Q$ denote the vector-valued expectation operator on the dyadic cube $Q$; i.e.
$$ \mathbb{E}_Q f:= \langle f \rangle_Q \mathbf{1}_Q.$$
There holds 
$$\sup_{Q \in \D} \|\mathbb{E}_Q\|_{L^p(W) \rightarrow L^p(W)}^p \sim [W]_{A_p}.$$ 
\end{proposition}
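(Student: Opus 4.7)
The plan is to prove, for each fixed $Q \in \D$, the two-sided comparison
\[\|\mathbb{E}_Q\|_{L^p(W) \to L^p(W)}^p \sim_d \bv{\mathcal{W}_{p,Q}\mathcal{V}_{p',Q}}^p,\]
where $V := W^{-p'/p}$, and then to conclude by taking the supremum over $Q$ and appealing to $\sup_Q \bv{\mathcal{W}_{p,Q}\mathcal{V}_{p',Q}}^p \sim_d [W]_{A_p}$, which one obtains by writing the definition of $[W]_{A_p}$ in reducing-operator form. The starting point is that $\mathbb{E}_Q f = \ang{f}_Q\1_Q$ is supported in $Q$ and constant there, so the defining property of the reducing operator in \cref{reducing operators} applied to the single vector $e=\ang{f}_Q$ gives
\[\|\mathbb{E}_Q f\|_{L^p(W)}^p = \int_Q |W(x)^{1/p}\ang{f}_Q|^p\,d\mu(x) \sim_d \mu(Q)\,|\mathcal{W}_{p,Q}\ang{f}_Q|^p.\]

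For the upper bound, I would factor $\ang{f}_Q = \dashint_Q W(y)^{-1/p}\bigl(W(y)^{1/p} f(y)\bigr)\,d\mu(y)$, move $\mathcal{W}_{p,Q}$ inside the average, bound by the matrix operator norm $\bv{\mathcal{W}_{p,Q}W(y)^{-1/p}}$ and $|W(y)^{1/p}f(y)|$, and apply Hölder with exponents $(p',p)$ to produce
\[|\mathcal{W}_{p,Q}\ang{f}_Q|^p \leq \mu(Q)^{-1}\left(\dashint_Q \bv{\mathcal{W}_{p,Q}W(y)^{-1/p}}^{p'} d\mu(y)\right)^{p/p'}\|f\|_{L^p(W)}^p.\]
The resulting average of $p'$-powers of operator norms can then be expanded over a basis of $\R^d$ using the finite-dimensional equivalence $\bv{A}^{p'} \sim_d \sum_i |Ae_i|^{p'}$, rewritten via the defining property of $\mathcal{V}_{p',Q}$ applied to the vectors $\mathcal{W}_{p,Q}e_i$, and converted using the identity $\bv{AB} = \bv{BA}$ for positive symmetric matrices, to yield the bound $\bv{\mathcal{V}_{p',Q}\mathcal{W}_{p,Q}}^{p'}$. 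Combined with \cref{reducing operators}, this gives $\|\mathbb{E}_Q\|^p \lesssim_d \bv{\mathcal{W}_{p,Q}\mathcal{V}_{p',Q}}^p \lesssim [W]_{A_p}$.

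For the matching lower bound, I would pick a unit vector $v \in \R^d$ realizing $\bv{\mathcal{W}_{p,Q}\mathcal{V}_{p',Q}} = |\mathcal{W}_{p,Q}\mathcal{V}_{p',Q}v|$ and construct a test function of the form $f(y) = W(y)^{-1/p} h(y)\1_Q(y)$, where the vector-valued $h$ is chosen to saturate the preceding Hölder inequality in the direction prescribed by $v$. The $L^p(W)$-norm of this $f$ reduces to $\|h\|_{L^p}$, so the remaining problem is a direct finite-dimensional optimization on the cube $Q$, and a short computation shows that the ratio $\|\mathbb{E}_Q f\|_{L^p(W)}/\|f\|_{L^p(W)}$ matches $\bv{\mathcal{W}_{p,Q}\mathcal{V}_{p',Q}}$ up to $d$-dependent constants. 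The final identification $\sup_Q \bv{\mathcal{W}_{p,Q}\mathcal{V}_{p',Q}}^p \sim_d [W]_{A_p}$ follows by unpacking the definition of $[W]_{A_p}$ and applying the same basis-expansion and reducing-operator manipulations used for the upper bound. The main obstacle is constructing the extremizer $h$: one must saturate a matrix-valued Hölder inequality simultaneously in the $y$-variable and in the maximizing singular direction of $\mathcal{W}_{p,Q}\mathcal{V}_{p',Q}$, but because we are working in finite dimensions this can be arranged by an explicit choice guided by the singular value decomposition of $\mathcal{W}_{p,Q}\mathcal{V}_{p',Q}$.
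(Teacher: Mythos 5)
The paper does not prove this statement itself; it cites \cite[Proposition~2.1]{Goldberg2003} and remarks in the preamble to these propositions that the Lebesgue-measure proof carries over verbatim to a general Borel measure $\mu$. Your blind reconstruction is essentially the standard reducing-operator argument (and, as far as I can tell, the same one that is in Goldberg), so there is no competing ``paper route'' to compare against. The skeleton you give is correct: the constancy of $\mathbb{E}_Qf$ on $Q$ reduces everything to $\mu(Q)|\mathcal{W}_{p,Q}\ang{f}_Q|^p$ via the defining property of $\mathcal{W}_{p,Q}$; factoring $\ang{f}_Q$ through $W^{-1/p}W^{1/p}$ and applying H\"older with exponents $(p',p)$ gives the upper bound in terms of $\dashint_Q \bv{\mathcal{W}_{p,Q}W(y)^{-1/p}}^{p'}\,d\mu(y)$; the lower bound follows by converse H\"older (equivalently the explicit extremizer you describe); and the identification $\sup_Q\bv{\mathcal{W}_{p,Q}\mathcal{V}_{p',Q}}^p\sim_d[W]_{A_p}$ is the reducing-operator characterization of $[W]_{A_p}$ that the paper itself records in Section~\ref{ModifiedClass}.

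One small sequencing slip worth flagging: in the upper-bound step you say to ``expand over a basis using $\bv{A}^{p'}\sim_d\sum_i|Ae_i|^{p'}$, rewrite via the defining property of $\mathcal{V}_{p',Q}$ applied to $\mathcal{W}_{p,Q}e_i$, and then use $\bv{AB}=\bv{BA}$''. If you expand $\bv{\mathcal{W}_{p,Q}W(y)^{-1/p}}^{p'}\sim\sum_i|\mathcal{W}_{p,Q}W(y)^{-1/p}e_i|^{p'}$ first, the summands have the extra $\mathcal{W}_{p,Q}$ on the \emph{outside} of $W(y)^{-1/p}$, which does not match the form $|W(y)^{-1/p}v|$ that the defining property of $\mathcal{V}_{p',Q}$ controls. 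The correct order is to invoke $\bv{\mathcal{W}_{p,Q}W(y)^{-1/p}}=\bv{W(y)^{-1/p}\mathcal{W}_{p,Q}}$ \emph{first} (both factors are symmetric positive semidefinite, so this is just $\bv{A}=\bv{A^{*}}$), \emph{then} expand $\bv{W(y)^{-1/p}\mathcal{W}_{p,Q}}^{p'}\sim_{d,p}\sum_i|W(y)^{-1/p}\mathcal{W}_{p,Q}e_i|^{p'}$, average in $y$ to get $\sum_i|\mathcal{V}_{p',Q}\mathcal{W}_{p,Q}e_i|^{p'}\sim\bv{\mathcal{V}_{p',Q}\mathcal{W}_{p,Q}}^{p'}=\bv{\mathcal{W}_{p,Q}\mathcal{V}_{p',Q}}^{p'}$. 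The pieces you list are all correct, just assembled in the wrong order; once reordered the argument closes.
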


To study weighted estimates for Haar shifts, we must introduce weight classes that take into account the complexity of the shift. As always we are assuming that the underlying measure $\mu$ is finite and nonzero on every dyadic cube, infinite on quadrants, and atomless, and that the pair $(\mu, \mathscr{H})$ is balanced. 
\begin{definition}
    Let $1<p<\infty$ and $N\in\N$. Given cubes $Q,R\in\mathcal D$, we denote
    \begin{equation*}
        c_p^b(Q,R)=\begin{cases}
            1,\textrm{ if }Q=R,\\
            \frac{m(Q)^{p/2}m(R)^{p/2}}{\mu(R)\mu(Q)^{p-1}},\textrm{ otherwise}.
        \end{cases}
    \end{equation*}
    Given a weight $W$, we say $W\in A_p^N$ if
    \begin{equation*}
        [W]_{A_p^N}:=\sup_{\substack{Q,R\in\mathcal D\\0\leq\mathrm{dist}(Q,R)\leq N+2}}c_p^b(Q,R)\dashint_Q\left(\dashint_R\bm| W(x)^{1/p}W(y)^{-1/p}\bm|^{p'}d\mu(y)\right)^{p/p'} d\mu(x)<\infty.
    \end{equation*}
\end{definition}
\begin{remark}
The definition of $[W]_{A_p^N}$ involves $m(Q)$  and so, \textit{a priori}, it depends on the choice of Haar system, in addition to the weight $W$ and the measure $\mu$. However, given two balanced pairs $(\mu, \mathscr{H})$ and $(\mu, \widetilde{\mathscr{H}})$, we have seen in \cref{rmk on m} that 
\[ m_{\mu, \mathscr{H}}(Q) \sim m_{\mu, \widetilde{\mathscr{H}}}(Q), \quad \text{ for every } Q \in \D. \]
In particular, weighted estimates depending on the modified weight classes are equivalent for Haar shifts defined with respect to different Haar systems, as long as they satisfy the balanced condition. 
\end{remark}

\section{Pointwise convex body domination for vector Haar shifts} \label{section 3}

In this section, we will prove a modified convex body domination principle for vector Haar shifts of complexity $(s,t)$. Let $\mathcal{H}_{\mathcal{D}}=\{h_{Q}\}_{Q\in \mathcal{D}}$ be a generalized Haar system in $\R^n$. 

To simplify notation we consider shifts of the form \[Tf(x) = \sum_{Q \in \D} T_Qf(x)= \sum_{Q \in \mc{D}}c_Q\ang{f, h_{Q_1}}h_{Q_2}(x)\]
    where $Q_1$ denote some interval in $\mc{D}_s(Q)$, with fixed location among the $2^{sn}$ possibilities. Similar with $Q_2\in \mc{D}_t(Q)$. There is no loss of generality in considering these Haar shifts since any such operator as in Definition \ref{def: vectorhaarshift} can be written as a sum of finitely many of those elementary shifts.

\begin{definition} \label{D: t-separated}
    A Haar shift is $t$-separated if there exists $k \in \{0, \dots, t\}$ such that $T_Q \neq 0$ only if $Q \in \sq{\mc{D}}^k := \bigcup_j \D^{k+j(t+1)}$.
\end{definition}
 As $\D= \bigcup_{k \in \{ 0, \dots, t\}} \sq{\mc{D}}^k$, every Haar shift is a finite sum of at most $(t+1)$ $t$-separated ones. 
\begin{remark} \label{R: properties of t-sep shift}The purpose of this reduction is to guarantee that for any cube $J \in \sq{\mc{D}}$,
    \[\sum_{Q \in \mc{D}: Q \supsetneq J}c_{Q} \ang{f, h_{Q_1}}h_{Q_2}\]
    is constant on $J$.
\end{remark}
Recall that, for any compact convex set $K \subset \R^d$ with non-empty interior, there exists an ellispoid $\mc{E}$, called the \emph{John ellispoid} of $K$ such that 
\[\mc{E} \subset K \subset \sqrt{d} \mc{E}.\] 
\begin{lemma} [\cite{NPTV}] Let $f: \R^n \to \R^d$ be integrable in $Q \in \D$ and non-trivial, i.e. $f(x) \neq 0$ on a set of positive measure. There exists a unique subspace $E \subset \R^d$ containing $\dang{f}_Q$ such that $\dang{f}_Q$ has non-empty interior in $E$.
In particular $E= \text{Ran} (F_f)$, where \[F_f: L^\infty(Q) \to \R^d, \quad F_f(\psi)= \ang{f \psi}_Q. \]
\end{lemma}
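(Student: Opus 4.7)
The plan is to take $E := \mathrm{Ran}(F_f)$ as the candidate subspace and verify the three required properties: that $\dang{f}_Q \subset E$, that $\dang{f}_Q$ has nonempty interior in $E$, and that $E$ is uniquely determined by these two conditions. The containment is immediate from the definitions, since $\dang{f}_Q = F_f(B_{L^\infty(Q)}) \subset \mathrm{Ran}(F_f) = E$.

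The main step is establishing that $\dang{f}_Q$ has nonempty interior in $E$. I would use that $E$ is finite-dimensional (as a subspace of $\R^d$), so I can set $k = \dim E$ and choose a basis $v_1,\dots,v_k$ of $E$. By the definition of $E$, for each $i$ there exists $\psi_i \in L^\infty(Q)$ with $F_f(\psi_i) = v_i$; note that nontriviality of $f$ forces $k \geq 1$ and each such $\psi_i$ is nonzero. After rescaling both $\psi_i$ and $v_i$ by the same positive constant, I can arrange $\|\psi_i\|_{L^\infty} \leq 1/k$ for every $i$. Then for any coefficients $a_1,\dots,a_k \in [-1,1]$, the function $\psi := \sum_i a_i \psi_i$ satisfies $\|\psi\|_{L^\infty} \leq 1$, so $\sum_i a_i v_i = F_f(\psi) \in \dang{f}_Q$. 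This exhibits an open neighborhood of $0$ in $E$ (namely the parallelepiped with vertices $\pm v_1 \pm \cdots \pm v_k$) that lies inside $\dang{f}_Q$.

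For uniqueness, I would first observe that $\mathrm{span}(\dang{f}_Q) = E$: for any nonzero $\psi \in L^\infty(Q)$, the vector $F_f(\psi/\|\psi\|_{L^\infty})$ lies in $\dang{f}_Q$, so $F_f(\psi)$ is a scalar multiple of a vector in $\dang{f}_Q$, giving $\mathrm{Ran}(F_f) \subset \mathrm{span}(\dang{f}_Q)$, and the reverse inclusion is obvious. Now suppose $E'$ is another subspace with $\dang{f}_Q \subset E'$ and $\dang{f}_Q$ having nonempty interior in $E'$. Containment gives $E = \mathrm{span}(\dang{f}_Q) \subset E'$. Conversely, since $\dang{f}_Q$ is symmetric about $0$ and has an interior point in $E'$, that interior point can be taken to be $0$ (by averaging with its negative, using convexity and symmetry), so $\dang{f}_Q$ contains a neighborhood of $0$ in $E'$; hence $E' \subset \mathrm{span}(\dang{f}_Q) = E$, and $E' = E$.

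The argument is essentially linear-algebraic bookkeeping, with no serious analytic obstacle. The only point that requires a bit of care is the rescaling in the interior step: one must simultaneously scale the chosen preimages into the unit ball of $L^\infty(Q)$ while preserving their linear independence in $E$, which is why the factor $1/k$ is needed in the bound $\|\psi_i\|_{L^\infty}\leq 1/k$ so that the triangle inequality yields $\|\psi\|_{L^\infty}\leq 1$ for convex combinations.
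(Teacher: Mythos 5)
The paper states this lemma with a citation to \cite{NPTV} and does not give a proof, so there is no in-paper argument to compare against. Your proof is correct: the inclusion $\dang{f}_Q\subset\mathrm{Ran}(F_f)$ is immediate, the rescaled preimages $\psi_i$ with $\|\psi_i\|_{L^\infty}\leq 1/k$ give a full-dimensional parallelepiped in $E$ inside $\dang{f}_Q$, and the uniqueness step correctly uses that $\mathrm{span}(\dang{f}_Q)=\mathrm{Ran}(F_f)$ together with the standard fact that a symmetric convex set with nonempty relative interior in $E'$ has $0$ as a relative interior point (midpoint of an interior point $x$ and the point $-x\in\dang{f}_Q$), forcing $E'\subset\mathrm{span}(\dang{f}_Q)$.
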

\begin{definition}
Given a non-trivial $f \in L^1(Q)$ as before, the John ellipsoid of $\dang{f}_Q$ is defined as the John ellipsoid in the subspace $E$.        
\end{definition} 

Notice that $\mc{E}$ need not be non-degenerate, i.e. it might not have full dimension in $\R^d$. The following lemma will be an important tool in transferring information from scalar Haar shifts to vector-valued ones. 

\begin{lemma}\label{lem:convex_body_components} Let $f\in L^{1}(\R^n;\R^d)$ and let $\{e_j\}_{j = 1}^{d}$ be an orthonormal basis of $\R^d$ associated to the John ellipsoid $\mc{E}$ of $\dang{f}_Q$ in the sense that 
 \[\mc{E} = \left\{\sum_{j = 1}^{d}x_j\alpha_je_j : \sum_{j = 1}^{d}x_j^2 \leq 1\right\},\]
 for some $\alpha_j\in [0,\infty)$. If $v \in \R^d$ is a vector so that 
    \[|v \cdot e_j| \leq A\ang{|f \cdot e_j|}_Q\]
    for all $j$, then $v \in Ad \dang{f}_Q$.
\end{lemma}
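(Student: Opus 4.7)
The plan is to reduce everything to the scalar inequality $|\dang{f}_Q\cdot e|=\ang{|f\cdot e|}_Q$ (valid for any unit vector $e$, by choosing the test function $\psi=\operatorname{sgn}(f\cdot e)$ in the definition of $\dang{f}_Q$), and then to exploit the fact that the John ellipsoid $\mc{E}$ sits between $\dang{f}_Q$ and $\sqrt{d}\,\dang{f}_Q$.

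First I would record the widths of $\dang{f}_Q$ along the axes of $\mc{E}$. Since $|\mc{E}\cdot e_j|=\alpha_j$ and $\mc{E}\subset\dang{f}_Q\subset\sqrt{d}\,\mc{E}$, the scalar identity above yields
\[
\alpha_j\ \leq\ \ang{|f\cdot e_j|}_Q\ \leq\ \sqrt{d}\,\alpha_j\qquad\text{for every }j.
\]
If some $\alpha_j=0$, the right inequality forces $\ang{|f\cdot e_j|}_Q=0$, and by hypothesis $v\cdot e_j=0$; so $v$ lies in the span of those $e_j$ with $\alpha_j>0$ (which is precisely the subspace $E=\mathrm{Ran}(F_f)$ in which $\mc{E}$ has full dimension). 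In what follows I restrict attention to those indices.

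For those $j$ with $\alpha_j>0$, decompose $v=\sum_j(v\cdot e_j)\,e_j=\sum_j\beta_j\alpha_j e_j$ with $\beta_j=(v\cdot e_j)/\alpha_j$. The hypothesis combined with the width bound gives
\[
|\beta_j|\,\alpha_j\ =\ |v\cdot e_j|\ \leq\ A\,\ang{|f\cdot e_j|}_Q\ \leq\ A\sqrt{d}\,\alpha_j,
\]
so $|\beta_j|\le A\sqrt{d}$. Consequently
\[
\frac{v}{Ad}\ =\ \sum_j\frac{\beta_j}{Ad}\,\alpha_j\,e_j,\qquad \sum_j\Bigl(\frac{\beta_j}{Ad}\Bigr)^2\ \leq\ \sum_j\frac{1}{d}\ \leq\ 1,
\]
and hence $v/(Ad)\in\mc{E}\subset\dang{f}_Q$, i.e.\ $v\in Ad\,\dang{f}_Q$, as required.

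The only real subtlety is the degenerate case; once one is careful that the lemma statement allows $\alpha_j=0$ and identifies $E$ correctly, the argument is just the computation above. The constants $\sqrt{d}$ (from the John inclusion) and $\sqrt{d}$ (from the Euclidean norm of a vector whose coordinates are bounded by one) multiply to give exactly the factor $d$ appearing in the conclusion.
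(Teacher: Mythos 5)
Your argument is correct and is essentially the same as the paper's: both use the John ellipsoid inclusions $\mc{E}\subset\dang{f}_Q\subset\sqrt{d}\,\mc{E}$ together with the scalar width identity $|\dang{f}_Q\cdot e_j|=\ang{|f\cdot e_j|}_Q$ to get $\ang{|f\cdot e_j|}_Q\leq\sqrt{d}\,\alpha_j$, and then check that $v/(Ad)$ satisfies the defining inequality for $\mc{E}$. The one place you are slightly more careful than the paper is the degenerate case $\alpha_j=0$, which the paper glosses over by formally dividing by $\alpha_j$.
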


\begin{proof}
  
Recall that $\mathcal{E}\subset \dang{f}_Q\subset  \sqrt{d}\mathcal{E} $ by the definition of John ellipsoid. By taking $\varphi_j = \text{sgn} (f \cdot e_j)$,
    \[\ang{|f \cdot e_j|}_Q e_j = \ang{\varphi_j (f \cdot e_j)e_j}_Q\]
    is a component of $\langle \varphi_j f\rangle_{Q}$, a vector in $\dang{f}_Q\subset  \sqrt{d}\mathcal{E}$.
    Therefore,  $\ang{|f \cdot e_j|}_Q \leq \sqrt{d}\alpha_j$ for each $j$.
    
    It follows from the assumption that 
    \[|v \cdot e_j| \leq A\ang{|f \cdot e_j|}_Q \leq A \sqrt{d}\alpha_j.\]
    
    Since $\sum_{j=1}^{d}\left(\frac{|v \cdot e_j|}{Ad\alpha_j}\right)^2\leq 1$, this implies that
    \begin{align*}
        v = \sum_{j=1}^d (v \cdot e_j) e_j = Ad\sum_{j=1}^d \left(\frac{v \cdot e_j}{Ad\alpha_j}\right)\alpha_j e_j \in Ad\mc{E} \subset Ad\dang{f}_Q.
    \end{align*}
\end{proof}

Next we use Lemma \ref{lem:convex_body_components} to deduce a vector-valued variant of the weak $(1,1)$ inequality satisfied by scalar-valued Haar shifts $T$ in the context of balanced measures. In what follows, we refer to $\tilde{T}$ as a truncation of $T$ if $\tilde{T}$ is obtained by discarding some of the terms of the sum in $T$. 

\begin{lemma}\label{lem: vectorweak11}
Assume that $(\mu,\mathscr{H})$ is balanced. Let $T$ be a Haar shift of complexity $(s,t)$ and $f\in L^{1}(\R^n;\R^d)$ be compactly supported in $Q_0\in \mathcal{D}$. Then if $\sq{T}$ is any truncation of $T$ and $\lambda>0$ 
  \begin{align*}
        \mu (\{x \in Q_0: \sq{T}f(x) \not \in \lambda \dang{f}_{Q_0}\}) \leq \|T\|_{L^1 \to L^{1, \infty}}(d^2 / \lambda) \mu(Q_0);
    \end{align*}
where the norm $\|T\|_{L^{1}\rightarrow L^{1,\infty}}$ refers to the action of $T$ in scalar functions $f\in L^{1}(\R^n;\R)$.
\end{lemma}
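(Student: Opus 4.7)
My plan is to reduce the vector-valued weak-type inequality to the scalar weak-type $(1,1)$ bound of \cref{T: LSMP}, applied componentwise along the orthonormal basis associated to the John ellipsoid of $\dang{f}_{Q_0}$. If $f \equiv 0$ on $Q_0$ the statement is trivial, so I may assume $f$ is non-trivial and let $\{e_j\}_{j=1}^d$ be the orthonormal basis of $\R^d$ with semi-axes $\alpha_j \geq 0$ furnished by \cref{lem:convex_body_components}.

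The first step is to use the contrapositive of \cref{lem:convex_body_components} with $A = \lambda/d$: if $\sq{T}f(x) \notin \lambda \dang{f}_{Q_0}$, then there must exist some index $j$ for which $|\sq{T}f(x) \cdot e_j| > (\lambda/d)\ang{|f \cdot e_j|}_{Q_0}$. Since $T$ is applied componentwise on vector-valued inputs, $\sq{T}f \cdot e_j = \sq{T}(f \cdot e_j)$, giving the inclusion
\[ \{x \in Q_0 : \sq{T}f(x) \notin \lambda \dang{f}_{Q_0}\} \subseteq \bigcup_{j=1}^{d} \{x \in Q_0 : |\sq{T}(f \cdot e_j)(x)| > (\lambda/d)\ang{|f \cdot e_j|}_{Q_0} \}. \]
For each $j$ I then apply the scalar weak $(1,1)$ estimate for $\sq{T}$ to the scalar function $f \cdot e_j$. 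A truncation of a Haar shift of complexity $(s,t)$ is again a Haar shift of complexity at most $(s,t)$ (the discarded coefficients are set to zero), so $\|\sq{T}\|_{L^1 \to L^{1,\infty}} \leq \|T\|_{L^1 \to L^{1,\infty}}$. Moreover, $f$ is supported in $Q_0$, so $\|f \cdot e_j\|_{L^1} = \mu(Q_0)\ang{|f \cdot e_j|}_{Q_0}$, and the scalar bound yields
\[ \mu \bigl( \{x \in Q_0 : |\sq{T}(f \cdot e_j)(x)| > (\lambda/d)\ang{|f \cdot e_j|}_{Q_0}\} \bigr) \leq \|T\|_{L^1 \to L^{1,\infty}}\frac{d\,\mu(Q_0)}{\lambda}. \]
Summing over $j = 1,\dots,d$ produces the advertised factor $d^2/\lambda$.

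The only point requiring a small sanity check is that indices $j$ with $\alpha_j = 0$ do not cause trouble. For such a $j$, $e_j$ is orthogonal to $E = \mathrm{Ran}(F_f)$, so $\ang{(f \cdot e_j)\varphi}_{Q_0} = 0$ for every $\varphi \in L^\infty(Q_0)$; testing with $\varphi = \mathrm{sgn}(f \cdot e_j)$ forces $f \cdot e_j \equiv 0$ $\mu$-almost everywhere on $Q_0$, so $\sq{T}(f \cdot e_j) \equiv 0$ and the corresponding set in the union is empty. I do not expect any genuine obstacle beyond this bookkeeping: the entire interaction with the balanced hypothesis and the complexity dependence is absorbed into $\|T\|_{L^1 \to L^{1,\infty}}$ via \cref{T: LSMP}, while \cref{lem:convex_body_components} is precisely tailored to make the componentwise reduction clean.
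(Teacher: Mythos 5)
Your proof is correct and follows essentially the same route as the paper's: both apply the contrapositive of \cref{lem:convex_body_components} with $A = \lambda/d$ to decompose the bad set into a union over the John-ellipsoid basis directions, then invoke the scalar weak $(1,1)$ bound for each component $f\cdot e_j$ (which has total mass $\mu(Q_0)\ang{|f\cdot e_j|}_{Q_0}$ since $f$ is supported in $Q_0$), and sum over $j$. Your remark about the degenerate directions $\alpha_j = 0$ is a correct bookkeeping point that the paper passes over silently; in that case $\ang{|f\cdot e_j|}_{Q_0} = 0$ forces $f\cdot e_j \equiv 0$ on $Q_0$, so the corresponding set is empty and the contribution to the paper's ratio $\int_{Q_0}|f\cdot e_j|\,d\mu / \ang{|f\cdot e_j|}_{Q_0}$ should be read as $0$. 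One small caveat, shared by the paper: the step $\|\sq{T}\|_{L^1\to L^{1,\infty}} \le \|T\|_{L^1\to L^{1,\infty}}$ does not follow merely from the truncation being another Haar shift of complexity at most $(s,t)$; what one actually gets from \cref{T: LSMP} is a bound on $\|\sq{T}\|_{L^1\to L^{1,\infty}}$ by the same quantity $\lesssim_\Xi 2^{(s+t)n}$ that bounds $\|T\|$. The paper's phrasing has the same imprecision, and the lemma remains valid with $\|T\|_{L^1\to L^{1,\infty}}$ understood as this uniform constant for the complexity class, which is how it is used downstream.
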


\begin{proof}
For any truncation $\tilde{T}$ of $T$ one has $\|\sq{T}\|_{L^{1}\rightarrow L^{1,\infty}}\leq \|T\|_{L^1\rightarrow L^{1,\infty}}<\infty$, since $(\mu,\mathscr{H})$ is balanced. Using that combined with Lemma \ref{lem:convex_body_components} one has that 
 \begin{align*}
        \mu(\{x \in Q_0: \sq{T}f(x) \not \in \lambda \dang{f}_{Q_0}\}) & \leq \sum_{j = 1}^{d}\mu(\{x \in Q_0 : |\sq{T}(f \cdot e_j)(x)| > (\lambda / d)\ang{|f \cdot e_j|}_{Q_0}\}) \\
        & \leq \sum_{j = 1}^{d}\|T\|_{L^1 \to L^{1, \infty}}\frac{d}{\lambda} \frac{\int_{Q_0}|f \cdot e_j| \, d\mu}{\ang{|f \cdot e_j|}_{Q_0}} \\
        & =\frac{d^2}{ \lambda} \|T\|_{L^1 \to L^{1, \infty}}\mu(Q_0).
    \end{align*}    \end{proof}

\begin{remark}
   The previous lemma holds in fact for any scalar linear operator $T$ which is weak $(1,1)$ with respect to a Radon measure $\mu$, when applied to a vector valued function $f$ as before.
\end{remark}
 Given a sparse family $\mathcal{S} \subset\mathcal{D}$, $N\in \N$ and two functions $f,g\in L^{1}(\R^n;\R^d)$, consider the bilinear sparse forms
\begin{equation*}
    \begin{split}
        \mathcal{A}_{\mathcal{S}}(f,g)=&\sum_{Q \in \mc{S}}\dashint_{Q}\dashint_{Q}|f(x) \cdot g(y)| d\mu(x) \, d\mu(y)\mu(Q),\\
    \mathcal{A}_{\mathcal{S}}^{N}(f,g)=&\sum_{\substack{ J, K \in \mc{S} \\ \text{dist}(J, K) \leq N+2}}\dashint_J\dashint_K |f(x)\cdot g(y)|d\mu(x)d\mu(y)\sqrt{m(J)}\sqrt{m(K)},
    \end{split}
    \end{equation*}
for $m(J)$ defined as in (\ref{def:m(Q)}).

When $d=1$, such sparse bilinear forms recover the ones introduced in \cite{CPW}. However, when $d\geq 2$, these bilinear sparse forms carry much more information of $f$ and $g$ than what one would have by naively taking products of averages of $|f|$ and $|g|$ in sparse cubes, since not only the amplitude of $|f(x)|$ and $|g(y)|$ come into play, but also their directions. 

Now we prove the first part of Theorem \ref{thm: sparseforvectorhaar}.
\begin{theorem} \label{T:thm A 1st part}
Let $\mu$ a Radon measure in $\R^n$ and $\mathscr{H}$ a generalized Haar system such that the pair $(\mu, \mathscr{H})$ is balanced. Let $f \in L^1(\R^n;\R^d)$ be compactly supported in $Q_0 \in \D$, and $T$ be a vector valued Haar shift of complexity $(s,t)$. There exists a sparse family $\mathcal{S}= \mathcal{S}(f)\subset \mathcal{D}(Q_0)$ and a constant $C>0$ such that  
\[Tf(x) \in C\sum_{Q \in \mc{S}}\dang{f}_Q \1_{Q}(x) + C\sum_{\substack{ J, K \in \mc{S} \\ \text{dist}(J, K) \leq s+t + 2 }}\dang{f}_{J} \frac{\1_K(x)}{\mu(K)}\sqrt{m(J)}\sqrt{m(K)} \quad \text{ on } Q_0.\] 
\end{theorem}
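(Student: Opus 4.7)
I would prove \cref{T:thm A 1st part} by a recursive stopping-time construction in the spirit of the scalar sparse domination argument in \cite{CPW}, upgraded both to the pointwise level and to the vector-valued setting. Two ingredients drive the argument: the vector weak $(1,1)$ estimate \cref{lem: vectorweak11}, which replaces the scalar weak $(1,1)$ bound used in the doubling/scalar case, and the John-ellipsoid lemma \cref{lem:convex_body_components}, which reduces membership in a convex body to testing against a finite orthogonal basis of directions.

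\textbf{Stopping construction.} After decomposing $T$ into a finite sum of $t$-separated Haar shifts via \cref{D: t-separated} (so that \cref{R: properties of t-sep shift} applies cube by cube), I fix the John basis $\{e_j\}_{j=1}^{d}$ of $\dang{f}_{Q_0}$ and define the stopping children $\mc{F}_{Q_0} \subset \D(Q_0)$ as the maximal cubes $Q \subsetneq Q_0$ on which either $\ang{|f \cdot e_j|}_Q > \Lambda \ang{|f \cdot e_j|}_{Q_0}$ for some $j$, or a suitable truncation of $T$ applied to $f$ escapes $\Lambda \dang{f}_{Q_0}$ on more than half the measure of $Q$. The first condition is handled by the dyadic maximal function weak $(1,1)$ bound in $d$ scalar directions; the second by \cref{lem: vectorweak11}. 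For $\Lambda$ large enough in terms of $n,d,s,t$ and the balanced constants, a union bound yields $\mu(\bigcup \mc{F}_{Q_0}) \leq \tfrac{1}{2}\mu(Q_0)$, so $E_{Q_0} := Q_0 \setminus \bigcup \mc{F}_{Q_0}$ has measure at least $\tfrac{1}{2}\mu(Q_0)$ and iterating produces a $\tfrac{1}{2}$-sparse family $\mc{S}$.

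\textbf{Splitting $Tf$ and extracting the modified form.} On $E_{Q_0}$ I decompose $Tf = T^{\mathrm{out}}f + T^{\mathrm{bd}}f + T^{\mathrm{in}}f$, where $T^{\mathrm{out}}$ collects the cubes $Q$ with neither $Q_1$ nor $Q_2$ strictly contained in any stopping cube, $T^{\mathrm{bd}}$ those with $Q_1 \subset Q^s$ for some stopping $Q^s$ while $Q_2 \not\subset$ any stopping cube, and $T^{\mathrm{in}}$ those with $Q_2 \subset Q^s$ (these vanish on $E_{Q_0}$ and are deferred to the recursion on $Q^s$). The principal part $T^{\mathrm{out}}f(x)$ lies in $C \dang{f}_{Q_0}$ directly from the stopping construction together with \cref{lem:convex_body_components}, contributing the first sparse form. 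For $T^{\mathrm{bd}}f(x)$ I combine the estimate $|\langle f, h_{Q_1}\rangle| \lesssim \sqrt{m(Q_1)}\, \langle |f|\rangle_{Q_1}$ (from $\|h_{Q_1}\|_{L^1(\mu)} = \sqrt{m(Q_1)}$) with the pointwise bound $|h_{Q_2}(x)| \lesssim m(Q_2)^{-1/2} \1_{Q_2}(x)$ from \cref{Prop: propertiesofm}, and then invoke the balanced comparison $m(Q) \sim m(\widehat{Q})$ iteratively over at most $s+t$ levels to replace $Q_1$ by the enclosing stopping cube $Q^s \in \mc{S}$ and $Q_2$ by a sparse relative. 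Finally, I use \cref{lem:convex_body_components} again to promote the scalar average $\langle |f|\rangle_{Q^s}$ to the convex body $\dang{f}_{Q^s}$, producing contributions of the form $\dang{f}_{Q^s}\, \frac{\1_{Q_2}(x)}{\mu(Q_2)}\sqrt{m(Q^s)\, m(Q_2)}$ with $\mathrm{dist}_\D(Q^s, Q_2) \leq s+t+2$ (the extra two units accounting for the ancestor passes incurred by the balanced descent).

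\textbf{Main obstacle.} The most delicate step is the boundary analysis: one must carefully match each boundary pair $(Q_1, Q_2)$ arising from $T^{\mathrm{bd}}$ with a pair $(J, K) \in \mc{S} \times \mc{S}$ while tracking the scale factor $\sqrt{m(J)m(K)}/\mu(K)$. This is where the balanced condition is indispensable — it is used repeatedly to ensure that $m$ stays comparable across adjacent dyadic levels, so that the boundary sum repackages itself as the modified sparse form in the statement without accumulating unbounded losses depending on the measure.
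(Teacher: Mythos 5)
Your overall scaffolding matches the paper's: decompose into $t$-separated elementary shifts, run a stopping-time construction powered by the vector weak $(1,1)$ inequality (\cref{lem: vectorweak11}) and a directional maximal-function condition, control the "good part'' on $E_{Q_0}$ directly, and recurse. So the strategy is essentially the right one. However, there is a genuine gap in the treatment of the boundary term, and it is at exactly the point where the vector-valued problem differs from the scalar one.

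The problematic step is your claim that after bounding $|\langle f, h_{Q_1}\rangle| \lesssim \sqrt{m(Q_1)}\,\langle|f|\rangle_{Q_1}$ you can ``use \cref{lem:convex_body_components} again to promote the scalar average $\langle|f|\rangle_{Q^s}$ to the convex body $\dang{f}_{Q^s}$.'' This cannot work. Taking the Euclidean norm of $\langle f, h_{Q_1}\rangle$ destroys all directional information, and \cref{lem:convex_body_components} only gives membership $v \in Ad\,\dang{f}_{Q^s}$ when you have \emph{componentwise} bounds $|v \cdot e_j| \leq A\langle|f \cdot e_j|\rangle_{Q^s}$ in the John basis $\{e_j\}$ of $\dang{f}_{Q^s}$. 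Knowing only that $|v| \lesssim \langle|f|\rangle_{Q_1}$ tells you nothing about $|v \cdot e_j|$ relative to $\langle|f \cdot e_j|\rangle_{Q^s}$: if $f$ is essentially aligned with $e_1$ then $\langle|f \cdot e_2|\rangle_{Q^s}$ can be arbitrarily small compared to $\langle|f|\rangle_{Q_1}$, and a vector of magnitude $\langle|f|\rangle_{Q_1}$ pointing in the $e_2$ direction is nowhere near $\dang{f}_{Q^s}$ (which in the extreme case degenerates to a segment along $e_1$). The paper's proof never takes this norm. Instead it expands the single boundary term $T_{\widetilde Q_j}f\,\1_{Q_j}$ as $\sum_{J,K}\alpha_J\mu(J)\alpha_K\mu(K)\,\ang{f}_J\,\1_K/\mu(K)$ over children $J$ of $(\widetilde Q_j)_1$ and $K$ of $(\widetilde Q_j)_2$, keeping the \emph{vector} average $\ang{f}_J$ intact; only the scalar coefficient $\alpha_J\mu(J)\alpha_K\mu(K)$ is bounded by $\sqrt{m(J)m(K)}$ via \eqref{E: coefficientcontrolbym(Q)} and the balanced condition. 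It then uses the vectorial stopping rule $\ang{f}_J \notin C\dang{f}_{Q'}$, together with a finite (at most $s$-step) descent through the stopping tree, to find a sparse cube $S$ containing $J$ with $\ang{f}_J \in C\dang{f}_S$. That is the mechanism you need, and it is missing from your proposal.

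A secondary omission: you say the output cube $Q_2$ is replaced ``by a sparse relative,'' but do not explain why that relative lies in $\mc{S}$. The paper handles this by explicitly enlarging the stopping family $\mc{S}'$ to $\mc{S} = \bigcup_{Q\in\mc{S}'}\mc{D}^t(Q)$ (all descendants of order at most $t$), verifying this remains sparse, and then observing that the relevant $K$ automatically lies in $\mc{D}^t(Q_j) \subset \mc{S}$. Without this device the cubes indexing your modified sparse form need not belong to the sparse collection, and the claimed domination does not make sense as written.
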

Before proving the theorem, we state an immediate corollary that follows from \cref{lem:pointwise_to_form}.
\begin{corollary}
    Under the hypothesis of Theorem \ref{thm: sparseforvectorhaar}, for every $f,g\in L^{1}(\R^n;\R^d)$ supported in $Q_0$, there exists a sparse family $\mathcal{S}$ such that 
    \begin{equation}
        \left|\int_{Q_0} Tf(x)\cdot g(x) d\mu(x)\right|\lesssim \mathcal{A}_{\mathcal{S}}(f,g)+\mathcal{A}_{\mathcal{S}}^{N}(f,g). 
    \end{equation}    
\end{corollary}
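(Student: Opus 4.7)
The plan is to adapt the recursive stopping-time construction of CPW \cite{CPW} to the vector-valued setting, using the John ellipsoid machinery encoded in Lemma \ref{lem:convex_body_components} to promote componentwise scalar estimates to convex body containments. As a first step I would reduce to the case that $T$ is $t$-separated in the sense of Definition \ref{D: t-separated}, which by Remark \ref{R: properties of t-sep shift} makes partial Haar sums constant at the resolution of the stopping cubes and costs only a factor of $t+1$ in the final constant.

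The sparse family $\mathcal{S}$ is then built iteratively: starting with $\mathcal{S} = \{Q_0\}$, for each $Q \in \mathcal{S}$ I would define $\mathrm{ch}_{\mathcal{S}}(Q)$ to consist of the maximal dyadic cubes $P \subsetneq Q$ at which at least one of two stopping conditions holds. Let $\{e_j\}_{j=1}^d$ be the orthonormal basis adapted to the John ellipsoid of $\dang{f}_Q$ as in Lemma \ref{lem:convex_body_components}. The first condition is a componentwise maximal-function stopping, $\ang{|f \cdot e_j|}_P > C_1 \ang{|f \cdot e_j|}_Q$ for some $j$; the second is an operator stopping, $\ang{|\sq{T}_Q f \cdot e_j|}_P > C_2 \ang{|f \cdot e_j|}_Q$ for some $j$, where $\sq{T}_Q$ is the truncation of $T$ keeping only those summands $T_R$ with $R \subseteq Q$ but $R$ not strictly contained in any tentative stopping child. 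Negating both conditions at every $x \in Q \setminus \bigcup_{P \in \mathrm{ch}_{\mathcal{S}}(Q)} P$ and invoking Lemma \ref{lem:convex_body_components} yields $\sq{T}_Q f(x) \in Cd\, \dang{f}_Q$, which is the first term in the claimed domination.

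To ensure sparsity $\mu\bigl(\bigcup_{P} P\bigr) \leq \tfrac{1}{2}\mu(Q)$, I would apply the weak-$(1,1)$ bound for the dyadic maximal function to the $d$ instances of the first stopping condition, and Lemma \ref{lem: vectorweak11} to $\sq{T}_Q$ for the second, using that any truncation of $T$ inherits the weak-$(1,1)$ bound of $T$, which holds in the balanced setting by Theorem \ref{T: LSMP} together with Proposition \ref{Prop: propertiesofm}. Choosing $C_1, C_2$ large enough (depending only on $d, n, s, t$ and $\mathscr{H}$) gives $\tfrac{1}{2}$-sparsity, and the construction is iterated on each $P \in \mathrm{ch}_{\mathcal{S}}(Q)$.

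The bulk of the argument, and the main obstacle, lies in what remains: for $x$ outside the stopping cubes one must still account for those Haar shift terms $c_R \ang{f, h_{R_1}} h_{R_2}(x)$ whose components $R_1, R_2$ are not jointly contained in a single cube of $\mathcal{S}$, but instead straddle the boundary between a sparse cube and a dyadic neighbor at comparable scale. Here I would estimate each such term using $\|h_{R_1}\|_{L^1(\mu)} = \sqrt{m(R_1)}$ and $\|h_{R_2}\|_{L^\infty(\mu)} \sim 1/\sqrt{m(R_2)}$ from Proposition \ref{Prop: propertiesofm}, and then regroup the contributions according to the pairs of nearby sparse cubes $(J,K)$ with $\mathrm{dist}(J,K) \leq s+t+2$, so as to recover the modified sparse sum with factors $\sqrt{m(J) m(K)}/\mu(K)$ in front of $\dang{f}_J \1_K(x)$. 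The balanced condition $m(Q) \sim m(\widehat{Q})$ is essential to keep the $m$-factors comparable across the scales between successive sparse generations, and the $t$-separated reduction is what allows the grouping by $(J,K)$ pairs to be unambiguous; without these, the boundary piece would accumulate uncontrollably.
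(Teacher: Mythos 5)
The corollary you were asked to prove is designed to be an \emph{immediate} consequence of the pointwise statement in Theorem~\ref{T:thm A 1st part} together with Lemma~\ref{lem:pointwise_to_form}: once you know
\[
Tf(x)\in C\sum_{Q\in\mathcal S}\dang{f}_Q\1_Q(x)
+C\!\!\sum_{\substack{J,K\in\mathcal S\\ \mathrm{dist}(J,K)\le s+t+2}}\!\!\dang{f}_J\,\frac{\1_K(x)}{\mu(K)}\sqrt{m(J)}\sqrt{m(K)}
\quad\text{on }Q_0,
\]
you simply dot both sides with $g(x)$, integrate over $Q_0$, and use the elementary estimate
$|\dang{f}_J\cdot g(x)|\le\dashint_J|f(y)\cdot g(x)|\,d\mu(y)$
(the content of Lemma~\ref{lem:pointwise_to_form}) to land on $\mathcal A_{\mathcal S}(f,g)+\mathcal A_{\mathcal S}^N(f,g)$. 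That one-line dualization is the entire proof.

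Your proposal instead sets out to re-derive the pointwise domination of Theorem~\ref{T:thm A 1st part} from scratch. Two things go wrong. First, even granting your sketch, you never perform the pairing-and-integration step, so the argument as written stops at a pointwise containment, which is not the statement of the corollary. Second, the paragraph you flag as ``the bulk of the argument'' --- handling the shift terms $T_R$ that straddle stopping cubes --- is exactly where the real work of Theorem~\ref{T:thm A 1st part} lies, and your description is too vague to constitute a proof. In the actual argument one must enlarge the sparse family by adding all descendants of order at most $t$ of each stopping cube, expand $T_{\widetilde Q_j}f\,\1_{Q_j}$ via the children of $S_j$ and $T_j$ using the coefficient bound \eqref{E: coefficientcontrolbym(Q)}, and then run a multi-step case analysis on whether each child $J$ of $S_j$ lands in $\mathcal B_1(Q_0)$ or requires further descent, tracking the distance bound $\mathrm{dist}(J,J_1)\le s-1$ across generations. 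None of that is visible in your outline, so as a blind proof of the corollary it is both harder than necessary and incomplete; the intended route is to quote the theorem and dualize.
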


\begin{proof}[Proof of \cref{T:thm A 1st part}]
    As mentioned before, we can assume that $T$ can be written as 
    \[Tf(x) = \sum_{Q \in \D} T_Qf(x)= \sum_{Q \in \mc{D}}c_Q\ang{f, h_{Q_1}}h_{Q_2}(x), \quad \sup_{Q\in \mathcal{D}}|c_Q|\leq 1 \]
    where $Q_1 \in \mc{D}_s(Q)$ and $Q_2\in \mc{D}_t(Q)$ have fixed positions, and let $T$ be $t$-separated as in Definiton \ref{D: t-separated}. We have 
    \[c_Q \neq 0 \Rightarrow Q \in \sq{\mc{D},}\]
    where $\sq{\mc{D}} = \sq{\mc{D}}^k$ for some $k$. We can assume $Q_0 \in \sq{\mc{D}}$, otherwise we can replace $Q_0$ with a larger cube. For a cube $J\in \mathcal{D}$, let $T^J$ denote the local operator 
    \[T^Jf(x) = \sum_{Q \in \mc{D}(J)}c_Q\ang{f, h_{Q_1}}h_{Q_2} = \sum_{Q \in \mc{D}(J)}T_Qf(x).\]
    Note that for $x\in Q_0$,
    \[Tf(x) = T^{Q_0}f(x) + \sum_{Q \in \mc{D}: Q \supsetneq Q_{0}}T_Qf(x)=:T^{Q_0}f(x) +\tilde{T}f(x).\]
    
    From Lemma \ref{lem: vectorweak11}, for any $C\geq C_0=2d^2\|T\|_{L^1 \to L^{1, \infty}}$ 
    \begin{align*}
        \mu(\{x \in Q_0: \sq{T}f(x) \not \in C\dang{f}_{Q_0}\}) \leq \|T\|_{L^1 \to L^{1, \infty}}(d^2 / C) \mu(Q_0) \leq \frac{1}{2}\mu(Q_0).
    \end{align*}

     Hence we have $\sq{T}f(x) \in C \dang{f}_{Q_0}$ for all $x\in Q_0$ since $\sq{T}f$ is constant on $Q_0$, according to Remark \ref{R: properties of t-sep shift}. Therefore 
    \[Tf(x) \in T^{Q_0}f(x) + C \dang{f}_{Q_0}\text{ for all }x\in Q_0, \]
    and so it suffices to bound the local operator $T^{Q_0}$. \\

    We define the stopping times as follows: for a cube $Q' \in {\mc{D}}(Q_0)$ , let $\mc{B}(Q')$ be the maximal cubes $J \in \mc{D}(Q')$ satisfying at least one of the following conditions:
    \begin{gather}
     J \in \sq{\mc{D}} \quad \text{and} \quad \sum_{Q \in \mc{D}(Q') : J \subsetneq Q}T_Q(f\1_{Q'})(x) \not \in C \dang{f}_{Q'} \,\, \text{on} \,\, J \label{E: stop1} \\
     \ang{f}_J \not \in C\dang{f}_{Q'}. \label{E: stop2}
    \end{gather}
    Let $\mc{Q}_1(Q') \subset \mc{B}(Q')$ denote the collection of cubes satisfying \eqref{E: stop1}. Consider the operator
    \[T^1 = \sum_{Q \in \mc{D}(Q') \setminus \bigcup_{J \in \mc{Q}_1(Q')}\mc{D}(J)}T_Q.\]
    Then if $x \in J \in \mc{Q}_1(Q')$, we have $T^1(f\1_{Q'})(x) \not \in C \dang{f}_{Q'}$. Therefore, using Lemma \ref{lem: vectorweak11} for $T^1$ in place of $\tilde{T}$ and choosing $C$ analogously as before
    \[\sum_{J \in \mc{Q}_1(Q')}\mu(J) \leq \mu\left(\{x \in Q' : T^1(f\1_{Q'})(x) \not \in C \dang{f}_{Q'}\}\right) \leq \frac{1}{4}\mu(Q').\]
    
    Similarly, we can use Lemma \ref{lem:convex_body_components} and the weak $(1,1)$ bound for the dyadic Hardy-Littlewood maximal function to bound the sum of the measures of the cubes satisfying \eqref{E: stop2} by $\frac{1}{4}\mu(Q')$ as long as $C\gg1$. All together we get
    \[\sum_{J \in \mc{B}(Q')}\mu(J) \leq \frac{1}{2}\mu(Q')\]
    for all $Q' \in \mc{D}(Q_0)$. We now form a sparse family $\mc{S}'$ in the standard way: set $\mc{B}_0(Q_0) := \{Q_0\}$ and inductively define 
    \[\mc{B}_k(Q_0) := \bigcup_{Q \in \mc{B}_{k-1}(Q_0)}\mc{B}(Q).\]
    
    The family
    \[\mc{S}' = \bigcup_{k=0}^{\infty} \mc{B}_k(Q_0)\]
    is then $\frac{1}{2}$-sparse. For technical reasons that will become clear later, it will be convenient to enlarge such a sparse family by adding all descendants of order at most $t$ of any cube originally in $\mathcal{S}'$, and let 
\begin{equation}\label{enlargedsparse}
    \mc{S} := \bigcup_{Q\in \mathcal{S}'}\mathcal{D}^{t}(Q),
\end{equation}
where $\mathcal{D}^{t}(I):=\bigcup_{l=0}^{t}\mathcal{D}_l(I).$
The following lemma guarantees that $\mathcal{S}$ is $\eta$-sparse for some $\eta=\eta(t,d)\in (0,1)$.

    \begin{lemma} Let $\mu$ be a Borel measure in $\R^n$ with no atoms, and $\eta_1\in (0,1)$. If $\mathcal{S}'$
 is an $\eta_1$-sparse family of cubes in $\R^n$, then $\mathcal{S}$ defined as in (\ref{enlargedsparse}) is $\eta_2$-sparse for some $\eta_2=\eta_2(\eta_1,t,n)\in (0,1)$.  

 \end{lemma}

\begin{proof}
    Since $\mathcal{S}'$ is $\eta_1$-sparse there is $C'(\eta_1)>0$ such that for all $Q\in \mathcal{D}$
    $$\sum_{R'\in \mathcal{S}'\colon R'\subset Q}\mu(R')\leq C'\mu(Q).$$

    Now for any $Q\in \mathcal{D}$ one can write
    \begin{equation}
        \begin{split}
            \sum_{R\in \mathcal{S}\colon R\subseteq Q} \mu(R)\leq &\sum_{k=0}^{t} \sum_{\substack{R\in \mathcal{S}\colon \\ R^{(k)}\in \mathcal{S}',R\subseteq Q }}\mu(R)\\
            =&\sum_{k=0}^{t} \left(\sum_{\substack{R\in \mathcal{S}\colon \\ R^{(k)}\in \mathcal{S}',R^{(k)}\subseteq Q }}\mu(R)+\sum_{\substack{R\in \mathcal{S}\colon \\ R^{(k)}\in \mathcal{S}',R\in \mathcal{D}^{(k-1)}(Q) }}\mu(R)\right)\\
            \leq & \sum_{k=0}^{t} \left(\sum_{\substack{R\in \mathcal{S}\colon \\ R^{(k)}\in \mathcal{S}',R^{(k)}\subseteq Q }}\mu(R^{(k)})+\mu(Q)\#\{\mathcal{D}^{(k-1)}(Q)\}\right)\\
            \leq & (t+1)C_{t,n}(C'+1)\mu(Q)
        \lesssim_{t,n,\eta_1}\mu(Q).
        \end{split}
    \end{equation}
    
\end{proof}
 
    Let $\mathcal{B}(Q_0)=\{Q_j\}_j$. To prove the sparse domination, we write
    \begin{equation}\label{eqn:sparse_decomp}
        T^{Q_0}f(x) = S_1(x) + S_2(x) + S_3(x) + \sum_{j}T^{Q_j}(f\1_{Q_j})(x)\1_{Q_j}(x)
    \end{equation}
    where 
    \[S_1(x) =  T^{Q_0}f(x)\1_{Q_0 \setminus \bigcup \mc{B}(Q_0)}(x),\] \[ S_2(x) = \sum_{j}\sum_{Q \in \mc{D}(Q_0) : \sq{Q}_j \subsetneq Q}T_Qf(x)\1_{Q_j}(x), \] \[S_3(x) = \sum_{j}T_{\sq{Q}_j}f(x)\1_{Q_j}(x)\]
    and $\sq{Q}_j$ is the smallest cube in $\sq{\mc{D}}$ strictly containing $Q_j$. Trivially, 
    \[\text{dist}(Q_j, \sq{Q}_j) \leq t + 1.\]
    Observe that for $x\in Q_0 \setminus \bigcup \mc{B}(Q_0)$, we have that 
    \[\sum_{Q \in \mc{D}(Q_0) : J \subsetneq Q}T_Qf(x) \in C\dang{f}_{Q_0}\]
    for all cubes $J \in \sq{\mc{D}}$ containing $x$. This implies that 
    \[S_1(x) \in C\dang{f}_{Q_0}.\]
    Similarly, we have 
    \[\sum_{Q \in \mc{D}(Q_0) : \sq{Q}_j \subsetneq Q}T_Qf \in C\dang{f}_{Q_0}\]
    on $\sq{Q}_j$ by the maximality in the stopping time construction. That combined with the disjointedness of the $Q_i$'s 
 implies $S_2(x) \in C \dang{f}_{Q_0}$. It remains to estimate $S_3$.

 For all $j$, one has that 
 $$T_{\sq{Q}_j}f(x)=\langle f,h_{S_j}\rangle h_{T_j}(x),$$ where $S_j\in \mathcal{D}_s(\sq{Q}_j)$ and $T_j\in \mathcal{D}_t(\sq{Q}_j)$. We then have

 \begin{equation} \label{E: refer here for L1 norm}
 \begin{split}
 T_{\sq{Q}_j}f(x)\1_{Q_j}(x)=&\langle f,\sum_{J\in \text{ch}(S_j)} \alpha_J \1_J\rangle \sum_{K\in \text{ch}(T_j)} \alpha_K \1_{K\cap Q_j}(x)\\
=&\sum_{J\in \text{ch}(S_j)} \sum_{K\in \text{ch}(T_j),K\subset Q_j}\alpha_J\mu(J)\alpha_K\mu(K) \ang{f}_{J} \frac{\1_K}{\mu(K)} 
 \end{split}
 \end{equation}

Observe that for any $J,K$ showing up in the double sum in (\ref{E: refer here for L1 norm}) one has $J\in \mathcal{D}_{s+1}(\sq{Q}_j)$ and $K\in \mathcal{D}_{t+1}(\sq{Q}_j)$ so in particular $\text{dist}(J,K)\leq s+t+2$. So, from the balanced property one has $m(J)\sim m(K)$.
Since $K\in \mathcal{D}_{t+1}(\tilde{Q}_j)$ and $K\subset Q_j$, one has $K \subset Q_j \subsetneq \sq{Q}_j$ so  we must have $K \in \bigcup_{l=0}^{t} \mathcal{D}_l(Q_j)$, and
\begin{equation}\label{eqn:rank_bound_2}
        K \in \mc{D}^t(Q_j)\subset \mathcal{S}.
    \end{equation}

    In other words, the cubes $K$ appearing in (\ref{E: refer here for L1 norm}) are automatically in our sparse collection $\mathcal{S}$. That need not be the case for $J$ so we need to analyze those more carefully. Fix $J\in \text{ch}(S_j)$.

  If $\ang{{f}}_{J} \in C\dang{f}_{Q_0}$, we use \eqref{E: coefficientcontrolbym(Q)} and balanced property to say that 
    $$|\alpha_J|\mu(J)\leq\sqrt{m(\hat{J})}\lesssim \sqrt{m(J)}.$$ Hence $|\alpha_J|\mu(J)|\alpha_K|\mu(K)\lesssim \sqrt{m(J)}\sqrt{m(K)}\sim m(K)$.

    \begin{align*}
         \sum_{K\in \text{ch}(T_j)\colon K\subset Q_j}\alpha_J\mu(J)\alpha_K\mu(K)\ang{{f}}_{J} \frac{\1_K}{\mu(K)} & \in C \dang{f}_{Q_0} \sum_{K\in \text{ch}(T_j)\colon K\subset Q_j} m(K)  \frac{\1_K}{\mu(K)} \\
        & \in C\dang{f}_{Q_0}\1_{Q_j},
    \end{align*}
     where we used \eqref{E: misboundedbymeasure}.
    
   If $\ang{{f}}_{J} \notin C\dang{f}_{Q_0}$, then either $J \in \mc{B}_1(Q_0)$ or $J\subsetneq J_1$, for some $J_1\in \mathcal{B}_1(Q_0)$.

    If $J \in \mc{B}_1(Q_0)$, then using \eqref{E: coefficientcontrolbym(Q)},
\begin{equation}\label{goodinclusion}
        \alpha_J\mu(J)\alpha_K\mu(K) \ang{f}_{J} \frac{\1_K}{\mu(K)} \in C\dang{f}_{J} \frac{\1_K}{\mu(K)}\sqrt{m(J)}\sqrt{m(K)},\text{ with } J,K\in \mathcal{S},
    \end{equation}
 and we stop there. 

 Now for the case when $J$ is strictly contained in some cube $J_1 \in \mc{B}_1(Q_0)$, we claim that $\text{dist}(J, J_1) \leq s-1$. Indeed, since we had 
    \[J^{(s+1)} = \sq{Q}_j,\]
    it follows that $J_1$ intersects $\sq{Q}_j$ and so we must have $J_1 \subsetneq \sq{Q}_j$ (otherwise $Q_j \subsetneq \sq{Q}_j \subset J_1$ and thus would contradict the disjointedness of cubes in $\mc{B}_1(Q_0)$). So the claim follows from
    \[J\subsetneq J_1 \subsetneq \sq{Q}_j = J^{(s+1)}.\]

    If $J\in \mathcal{B}(J_1)$ we are fine since that will imply $J\in \mathcal{S}$ and we will have (\ref{goodinclusion}). If not, then either $\ang{f}_{J}\in C\dang{f}_{J_1}$ or $J\subsetneq J_2$, for some $J_2\in \mathcal{B}(J_1)\subset \mathcal{B}_2(Q_0)$, with $\text{dist}(J,J_2)\leq s-2$. Iterating this process finitely many times we can find $S \in \bigcup_{k=1}^{s}\mathcal{B}_k(Q_0)$ containing $J$ and satisfying
    \[\ang{f}_{J} \in C \dang{f}_{S}.\]
    Therefore 
    \begin{align*}
        \alpha_J\mu(J)\alpha_K\mu(K) \ang{f}_{J} \frac{\1_K}{\mu(K)} \in & C \dang{f}_{S} \frac{\1_K}{\mu(K)}\sqrt{m(J)}\sqrt{m(K)}\\
        \subseteq  & C\dang{f}_{S} \frac{\1_K}{\mu(K)}\sqrt{m(S)}\sqrt{m(K)}.
    \end{align*}
   We have thus shown that, for all $x \in Q_0$

     \[Tf(x) \in  C \dang{f}_{Q_0}  + \sum_{j}T^{Q_j}(f\1_{Q_j})(x)\1_{Q_j}(x)  + C\sum_{\substack{\substack{J \in \bigcup_{k=1}^s \mathcal{B}_k(Q_j) \\ K \in \D^t(Q_j): \\ \dd(J, K) \leq s+t + 2 }}}\dang{f}_J \frac{\1_{K}(x)}{\mu(K)}\sqrt{m(J)}\sqrt{m(K)} \]
and Theorem \ref{T:thm A 1st part} thus follows from iterating this construction at each level of the sparse family.      

\end{proof}

\subsection{Convex body domination for $L^1$ normalized shifts}
As we mentioned before, the main reason for the failure of usual sparse domination for Haar shifts, even in the balanced setting, seems to be the worse behaviour of the kernel $K_Q$ of $T_Q$. Indeed, if we assume that the coefficients of the shift are just bounded, we only get  $$\|K_Q\|_{L^\infty(\mu)} \lesssim_{\mu,s,t} \frac{1}{m(Q)}, \quad \text{ for any $Q \in \D $,}$$
instead of $\frac{1}{\mu(Q)}$ on the right-hand side, which is the usual upper bound in the doubling setting. 

In what follows, we work with a general Radon measure $\mu$ on $\R^d$, not necessarily balanced, such that $0 < \mu(Q) < \infty$ for every $Q \in \D$ and such that each quadrant has infinite measure. 
First we introduce the nonhomogeneous Calderón-Zygmund decomposition. To state it we need the nonhomogeneous $\BMO$ norm:
$$
\|f\|_{\BMO(\mu)} =\sup_{Q\in\D} \frac{1}{\mu(Q)} \int_Q|f - \LL f\RR_{\widehat{Q}}| d\mu.
$$

\begin{lemma}\cite{CPW} \label{L: CZD}
Let $f:\R^n\rightarrow \R$ with $f \in L^1(\mu)$ supported in $Q_0 \in \D$. Then, for every $\lambda>0$ there exist functions $g,b$ such that $f=g+b$ and the following holds
\begin{enumerate}
    \item There exists a family of pairwise disjoint intervals $\{Q_k\}_k \subset \D(Q_0)$ such that \[b= \sum_{k \in \N} b_k; \quad \quad b_k= f\1_{Q_k}- \LL f\1_{Q_k}\RR_{\widehat{Q_k}}\1_{\widehat{Q_k}};\]
    In particular, for every $k$, $\|b_k\|_{L^1(\mu)} \lesssim \int_{Q_k} |f| d\mu$ and $b_k$ has zero mean on $\widehat{Q_k}$. Finally, the cubes are selected via the stopping time argument for the dyadic maximal function, therefore $\LL |f| \RR_{Q_k} > \lambda$.

    \item We have that $g \in L^p(\mu)$ for every $1 \leq p < \infty $ and $\|g\|^p_{L^p(\mu)} \lesssim_p \lambda^{p-1}\|f\|_{L^1(\mu)}$. Moreover, $g \in \BMO(\mu)$ and $\|g\|_{\BMO} \leq \lambda.$
\end{enumerate}
\end{lemma}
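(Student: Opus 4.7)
The strategy will be a nonhomogeneous analog of the classical Calder\'on-Zygmund decomposition. The crucial novelty, reflected already in the statement, is that the bad function $b_k$ is defined by subtracting the \emph{parent} average $\LL f\1_{Q_k}\RR_{\widehat{Q_k}}$ instead of the average on $Q_k$ itself; this is what matches the nonhomogeneous $\BMO$ norm, which also compares values to parent averages. To start I would run the standard stopping-time procedure: select $\{Q_k\}$ as the maximal dyadic cubes $Q\subset Q_0$ with $\LL|f|\RR_Q>\lambda$. Pairwise disjointness is automatic, and maximality forces $\LL|f|\RR_{\widehat{Q_k}}\leq\lambda$ for every $k$ (if $\LL|f|\RR_{Q_0}\leq\lambda$ the family could a priori be empty, in which case $g=f$ and there is nothing to check). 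With $b_k$ as in the statement, the zero-mean property $\int b_k\,d\mu=0$ is a direct computation, and the $L^1$ estimate $\|b_k\|_{L^1(\mu)}\leq 2\int_{Q_k}|f|\,d\mu$ follows by the triangle inequality together with the bound $|\LL f\1_{Q_k}\RR_{\widehat{Q_k}}|\,\mu(\widehat{Q_k})\leq \int_{Q_k}|f|\,d\mu$. This yields part~(1).

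For part~(2) my plan is first to prove $\|g\|_{L^1(\mu)}\lesssim\|f\|_{L^1(\mu)}$ and $\|g\|_{L^\infty(\mu)}\lesssim\lambda$, and then interpolate via
\[\|g\|_{L^p(\mu)}^p\leq \|g\|_{L^\infty(\mu)}^{p-1}\,\|g\|_{L^1(\mu)}\lesssim\lambda^{p-1}\|f\|_{L^1(\mu)}.\]
The $L^1$ bound is just the triangle inequality applied to $g=f-\sum_k b_k$ using part~(1). For the $L^\infty$ bound, off $\bigcup_k\widehat{Q_k}$ we have $g=f$, and dyadic Lebesgue differentiation (valid for any Radon measure) combined with the stopping condition gives $|f|\leq\lambda$ a.e.\ there; on each $Q_k$, $g$ equals the constant $\LL f\1_{Q_k}\RR_{\widehat{Q_k}}$ up to contributions coming from strictly larger parents $\widehat{Q_j}$ containing $Q_k$, and this constant is controlled by $\LL|f|\RR_{\widehat{Q_k}}\leq\lambda$; on $\widehat{Q_k}\setminus Q_k$ the same differentiation argument gives $|f|\leq\lambda$ a.e.\ pointwise. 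Careful bookkeeping of how distinct parents $\widehat{Q_k}$ can coincide (when $Q_k,Q_j$ are siblings) or nest is needed, but maximality of the $Q_k$ keeps the overlap under control.

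Finally, for $\|g\|_{\BMO(\mu)}\leq\lambda$, I would fix $Q\in\D$ and estimate $\frac{1}{\mu(Q)}\int_Q|g-\LL g\RR_{\widehat Q}|\,d\mu$ by splitting according to the selected cubes $Q_k\subset Q$ (on which $g$ is constant) and the complement (where $|g|\leq\lambda$). The decisive point is that subtracting the \emph{parent} average in $b_k$ was designed precisely so that, for any $Q$ nested between some $Q_k$ and $Q_0$, the correction constants telescope against $\LL g\RR_{\widehat Q}$ leaving an error of order $\lambda$. I expect this $\BMO$ estimate to be the main obstacle, since it is the step that most genuinely uses the nonhomogeneous modification of the decomposition: one must track how coincident or nested parents $\widehat{Q_k}$ could in principle produce overlaps that would spoil a naive bound, and use the maximality of the $Q_k$ to show that these overlaps collapse into the single uniform constant $\lambda$.
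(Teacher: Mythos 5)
Your Part~(1) is fine, but Part~(2) rests on an $L^\infty$ bound that is \emph{false}: one cannot have $\|g\|_{L^\infty(\mu)}\lesssim\lambda$, so the $L^\infty$--$L^1$ interpolation step collapses. After summing, $g=f\1_{Q_0\setminus\bigcup_k Q_k}+\sum_k\LL f\1_{Q_k}\RR_{\widehat{Q_k}}\1_{\widehat{Q_k}}$; the first term is $\leq\lambda$ a.e.\ and each coefficient satisfies $|\LL f\1_{Q_k}\RR_{\widehat{Q_k}}|\leq\LL|f|\RR_{\widehat{Q_k}}\leq\lambda$, but the parents $\widehat{Q_k}$, unlike the pairwise disjoint $Q_k$, may contain a common point arbitrarily often, and there is no cancellation when $f\geq0$. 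Maximality of the $Q_k$ does \emph{not} control this overlap. A concrete counterexample already occurs with Lebesgue measure: $n=1$, $Q_0=[0,1)$, $\lambda=1$, $f=\tfrac75\,\1_{\bigcup_{k\geq0}[2^{-2k-1},2^{-2k})}$. The stopping cubes are $Q_k=[2^{-2k-1},2^{-2k})$, with $\widehat{Q_k}=[0,2^{-2k})$ (one checks $\LL|f|\RR_{[0,2^{-j})}<1$ for every $j$, so these $Q_k$ are exactly the maximal cubes), and then
\[
g(x)=\tfrac{7}{10}\,\#\bigl\{k\geq0:\ x<2^{-2k}\bigr\}\ \sim\ \log(1/x),
\]
which is unbounded near $0$. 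This shows the failure is not an artifact of non-doubling measures: it comes from supporting $b_k$ on the parent $\widehat{Q_k}$ rather than on $Q_k$, which is precisely the modification that makes the decomposition useful here, and it is exactly the reason the lemma asserts $g\in\BMO(\mu)$ rather than $g\in L^\infty(\mu)$.

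The $L^p$ estimate therefore needs an argument that bypasses $L^\infty$. Grouping by distinct parents, write $g_2:=\sum_k\LL f\1_{Q_k}\RR_{\widehat{Q_k}}\1_{\widehat{Q_k}}=\sum_R d_R\1_R$. Then $|d_R|\leq\lambda$, and since any dyadic cube $P$ containing a parent $\widehat{Q_k}$ strictly contains $Q_k$ and so was not selected, one has the Carleson packing bound $\sum_{R\subset P}|d_R|\,\mu(R)\leq\int_P|f|\,d\mu\leq\lambda\,\mu(P)$ for all $P\in\D$, together with the global bound $\sum_R|d_R|\mu(R)\leq\|f\|_{L^1(\mu)}$. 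A John--Nirenberg-type stopping-time iteration on these two facts yields the distributional estimate $\mu(\{|g_2|>t\})\lesssim e^{-ct/\lambda}\,\|f\|_{L^1(\mu)}/\lambda$ for $t\geq\lambda$ (and Chebyshev gives $\|f\|_{L^1(\mu)}/t$ for $t\leq\lambda$), whence $\|g_2\|_{L^p(\mu)}^p\lesssim_p\lambda^{p-1}\|f\|_{L^1(\mu)}$ by integrating $t^{p-1}$. The $\BMO(\mu)$ bound follows from the same two properties of the sequence $\{d_R\}$, or from the telescoping observation you sketch for cubes containing $\widehat{Q}$; but note that the overlaps you hope would ``collapse into a single uniform constant'' genuinely accumulate, and the proof must track them with the packing condition rather than dismiss them.
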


\begin{lemma} \label{T:weak (1,1) for L^1 norm}
Let $\mu$ be as before, $\mathscr{H}$ a standard Haar system, and $T$ be a $L^1$ normalized shift of complexity $(s,t)$ as in Definition \ref{D: L^1 normalized} with $d=1$ (scalar valued). Then 
$$\|T\|_{L^1(\mu) \to L^{1, \infty}(\mu)} \lesssim C(s,d,\mu),$$
where the constant $C(s,d,\mu)$ is linear in the complexity. 
\end{lemma}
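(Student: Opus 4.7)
The plan is to apply the nonhomogeneous Calder\'on-Zygmund decomposition from \cref{L: CZD} at height $\lambda > 0$, writing $f = g + b$ with $b = \sum_k b_k$, and to estimate the two parts separately.

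For the good part, I would use the $L^2(\mu)$-boundedness of $T$ with operator norm at most one---a straightforward consequence of Bessel's inequality applied to $\{h_Q\}$ together with $|c_{J,K}^Q| \leq 1$---combined with the bound $\|g\|_{L^2(\mu)}^2 \lesssim \lambda \|f\|_{L^1(\mu)}$ from \cref{L: CZD}. Chebyshev then gives $\mu\{|Tg| > \lambda/2\} \lesssim \|f\|_{L^1(\mu)}/\lambda$ with an absolute constant, independent of complexity.

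For the bad part, set $\Omega := \bigcup_k Q_k$, which satisfies $\mu(\Omega) \leq \|f\|_{L^1(\mu)}/\lambda$. The strategy is to prove the $L^1$-estimate
\[\int_{\Omega^c}|Tb|\,d\mu \;\lesssim_\mu\; (s+1)\,\|f\|_{L^1(\mu)},\]
from which Chebyshev on $\Omega^c$ yields a weak-$(1,1)$ bound that is linear in $s$. The cornerstone is a cube-by-cube analysis of $T_Qb_k$. Since $b_k$ is supported on $\widehat{Q_k}$ and has zero mean there, $\langle b_k, h_J\rangle$ vanishes whenever $J \supsetneq \widehat{Q_k}$ (because $h_J$ is then constant on $\widehat{Q_k}$) or $J \cap Q_k = \emptyset$ (because $b_k$ is constant on $J$ and $h_J$ has zero mean); what survives forces $J \subseteq Q_k$ or $J = \widehat{Q_k}$. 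Splitting $Q$ according to its relation to $\widehat{Q_k}$, I expect that (a) cubes $Q \subseteq Q_k$ contribute nothing on $\Omega^c$ since $\operatorname{supp}(T_Qb_k) \subseteq Q_k$; (b) cubes $Q \subsetneq \widehat{Q_k}$ not contained in $Q_k$ sit inside a sibling of $Q_k$---using that no dyadic cube lies strictly between $Q_k$ and $\widehat{Q_k}$---and contribute zero by the cancellation above; (c) the single case $Q = \widehat{Q_k}$ contributes one term; and (d) cubes $Q \supsetneq \widehat{Q_k}$ are confined by the constraint $J \in \D_s(Q)$, $J \subseteq \widehat{Q_k}$, to at most $s$ ancestors $\{Q_k^{(2)}, \ldots, Q_k^{(s+1)}\}$. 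For each of the $s+1$ contributing cubes, the $L^1$-normalization $\|K_Q\|_{L^\infty(\mu)} \lesssim_\mu 1/\mu(Q)$ gives
\[\|T_Qb_k\|_{L^1(\mu)} \leq \|K_Q\|_{L^\infty(\mu)}\,\mu(Q)\,\|b_k\|_{L^1(\mu)} \lesssim_\mu \|b_k\|_{L^1(\mu)},\]
uniformly in $Q$. Summing over these cubes and then over $k$, using $\sum_k\|b_k\|_{L^1(\mu)} \lesssim \|f\|_{L^1(\mu)}$, closes the argument.

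The main obstacle---and the precise point where the $L^1$-normalization is indispensable---is this uniform cube estimate. Without the hypothesis $\|K_Q\|_{L^\infty} \lesssim_\mu 1/\mu(Q)$ one has only the weaker $\|K_Q\|_{L^\infty} \lesssim 1/m(Q)$ (even in the balanced setting), which introduces a factor $\mu(Q)/m(Q)$ that typically blows up exponentially in $s+t$. Replacing $m(Q)$ by $\mu(Q)$ is exactly what converts the cube sum---finite and linear in $s$ thanks to the cancellation analysis---into the promised linear-in-complexity constant.
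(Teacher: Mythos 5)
Your proof follows the same strategy as the paper: nonhomogeneous Calder\'on–Zygmund decomposition from \cref{L: CZD}, an $L^2$ estimate for the good part, and a cancellation analysis combined with the $L^1$-normalization for the bad part. The bookkeeping differs only cosmetically — you estimate $\int_{\Omega^c}|Tb|\,d\mu$ directly, whereas the paper splits $Tb = T_1b + T_2b$ with $T_1b = \sum_j T^{Q_j}b_j$ supported in $\Omega$ and applies Chebyshev to $T_2b$ — and your cube-counting recovers the same $s+1$ contributing ancestors per selected cube.

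One point in your write-up is imprecise and deserves correction. You claim the $L^2(\mu)$ operator norm is at most one "as a straightforward consequence of Bessel's inequality together with $|c_{J,K}^Q| \leq 1$." With only $\sup_{Q,J,K}|c_{J,K}^Q| \leq 1$, the Bessel/Cauchy–Schwarz argument gives $\|T_Qf\|_{L^2}^2 \leq 2^{n(s+t)}\sum_{J\in\D_s(Q)}|\langle f,h_J\rangle|^2$, hence $\|T\|_{L^2\to L^2}\lesssim 2^{n(s+t)/2}$, which is \emph{exponential} in the complexity and would defeat the claimed linear dependence. The complexity-uniform $L^2$ bound is itself a consequence of the $L^1$-normalization: since $\|h_J\|_\infty \geq \mu(J)^{-1/2}$, the hypothesis $\|K_Q\|_\infty \lesssim_\mu \mu(Q)^{-1}$ forces $|c_{J,K}^Q| \lesssim_\mu \sqrt{\mu(J)\mu(K)}/\mu(Q)$, and then $\sum_{J\in\D_s(Q)}\mu(J)=\mu(Q)$, $\sum_{K\in\D_t(Q)}\mu(K)=\mu(Q)$ yield $\|T_Qf\|_{L^2}^2 \lesssim_\mu \sum_{J\in\D_s(Q)}|\langle f,h_J\rangle|^2$ with no $2^{n(s+t)}$ factor, and orthogonality of the ranges of the $T_Q$ gives $\|T\|_{L^2\to L^2}\lesssim_\mu 1$. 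So the good-function estimate does come out complexity-free, but for the right reason; you should invoke the $L^1$-normalization here, not merely the coefficient bound.
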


\begin{proof}

 As before, write $T=\sum_{Q \in \D} T_Q$, where $T_Q$ has kernel 
 $$ \|K_{Q}\|_{\infty}\lesssim_{\mu}\frac{1}{\mu(Q)}.$$

 Let $\lambda>0$, $f=g+b$ the Calder\'on-Zygmund decomposition of $f$ at height $\lambda.$ Using \cref{L: CZD} and $L^2$ boundedness of Haar shifts
 \begin{align*}
     \mu \left( \left\{ x: |Tf(x)| > \lambda \right\}\right) \leq &  \mu(\left\{ x: |Tg(x)| > \lambda/2 \right\})+ \mu(\left\{ x: |Tb(x)| > \lambda/2 \right\}) \\ \leq & \frac{C}{\lambda}\|f\|_{L^1(\mu)}  + \mu( \left\{ x: |Tb(x)| > \lambda/2 \right\}),
 \end{align*}

 Hence we only have to estimate the second term. Write 
 \[Tb_j(x)= T^{Q_j}b_j(x) + \sum_{Q_j \subsetneq Q} T_Qb_j(x)\]
 so that 
 \[Tb(x)= \sum_{j}T^{Q_j}b_j(x) + \sum_j \sum_{Q_j \subsetneq Q} T_Qb_j(x)= T_1b(x)+ T_2b(x).\]
 Notice that $\mathrm{supp}(T_1b) \subseteq \bigcup_j Q_j$, so that  
 $$\mu\left(\left\{x: |T_1b(x)| > \frac{\lambda}{4} \right\}\right) \leq \sum_j\mu( Q_j) \leq \frac{\|f\|_{L^1(\mu)}}{\lambda}.$$
 For a fixed $j$, notice that $\sum_{Q_j \subsetneq Q} T_Qb_j(x)$ has a finite number of terms, depending linearly on the complexity of the shift. Indeed, we have that
 $T_Qb_j(x)$ vanishes unless $Q \subseteq Q_j^{(s+1)}$.
 All together we have that 
 \[T_2b(x)= \sum_{j} \sum_{Q_j\subsetneq Q\subseteq Q_j^{(s+1)}}T_Qb_j(x),\]
 and 
 \begin{align*}
     \mu\left( \left\{ x: |T_2b(x)| > \frac{\lambda}{4} \right\}\right) \leq & \frac{4}{\lambda} \sum_j \sum_{ Q_j\subsetneq Q\subseteq Q_j^{(s+1)}} \|T_Qb_j\|_{L^1(\mu)} \\ \leq & \frac{4}{\lambda} \sum_j \sum_{ Q_j\subsetneq Q\subseteq Q_j^{(s+1)}} \|T_Q\|_{L^1(\mu) \to L^1(\mu)}\|b_j\|_{L^1(\mu)}.
 \end{align*}
 
 Since $\|T_Q\|_{L^1(\mu) \to L^1(\mu)}= \text{ess} \sup_y \|K_Q(\cdot,y )\|_{L^1(\mu)} \lesssim_{\mu} 1$
 we can conclude that 
  \begin{align*}
     \mu\left(\left\{ x: |T_2b(x)| > \frac{\lambda}{4} \right\}\right)\lesssim_{\mu} & \frac{1}{\lambda} C(s) \sum_j \|b_j\|_{L^1(\mu)} \lesssim_{\mu} \frac{1}{\lambda}C(s) \|f\|_{L^1(\mu)},
 \end{align*}
 where $C(s)=\#\{Q\in \mathcal{D}\colon Q_j\subsetneq Q\subseteq Q_j^{(s+1)}\}=s.$
 This concludes the proof.
\end{proof}
Now we prove the second part of \cref{thm: sparseforvectorhaar}.
\begin{theorem} \label{T:thm A 2nd part}
Let $\mathscr{H}=\{h_Q\}_{Q \in \D}$ be a Haar system with respect to $\mu$. For any $f \in L^1(\R^n; \R^d)$ supported on a dyadic cube $Q_0$, there is an $\eta$-sparse family $\mc{S}$ and a positive constant $C=C(d,s,t)$ so that 
\[Tf(x) \in C\sum_{Q \in \mc{S}}\dang{f}_Q \1_{Q}(x) \quad \text{ on $Q_0$. }\]
Also, the constant $C=C(d,s,t)$ depends linearly on the complexity.
\end{theorem}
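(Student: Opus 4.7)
The plan is to parallel the proof of \cref{T:thm A 1st part}, substituting the balanced condition with the $L^1$-normalization $\|K_Q\|_\infty \lesssim 1/\mu(Q)$. This substitution brings two payoffs. First, \cref{T:weak (1,1) for L^1 norm} provides $\|T\|_{L^1 \to L^{1,\infty}} \lesssim C(s)$ with constant linear in the complexity, which (via \cref{lem: vectorweak11}) yields a vector weak $(1,1)$ bound for any truncation $\sq{T}$, with constants that propagate linearly through to the final $C = C(d,s,t)$. Second, and more substantively, the kernel bound furnishes the pointwise coefficient estimate $|c_Q \alpha_J \alpha_K| \lesssim 1/\mu(Q)$ for $J \in \text{ch}(Q_1)$ and $K \in \text{ch}(Q_2)$; this replaces the $1/\sqrt{m(J)m(K)}$-type bounds of the balanced setting and is precisely what is needed to collapse the modified sparse form of \cref{T:thm A 1st part} into the standard convex-body sparse form.

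I would begin with the familiar reduction to $t$-separated shifts $Tf(x) = \sum_{Q \in \sq{\D}} c_Q \ang{f, h_{Q_1}} h_{Q_2}(x)$ of fixed position, and then run the stopping-time construction of \cref{T:thm A 1st part} essentially verbatim: the bad cubes $\mc{B}(Q_0)$ are maximal $J \in \D(Q_0)$ with either (a) $J \in \sq{\D}$ and a truncated partial sum escaping $C \dang{f}_{Q_0}$ on $J$, or (b) $\ang{f}_J \notin C\dang{f}_{Q_0}$. For $C$ large enough, the vector weak $(1,1)$ bound and the weak $(1,1)$ bound for the dyadic maximal function (combined with \cref{lem:convex_body_components}) give $\sum_{J \in \mc{B}(Q_0)} \mu(J) \leq \mu(Q_0)/2$. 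Iterating and enlarging to $\mc{S} = \bigcup_{Q \in \mc{S}'}\D^t(Q)$ yields an $\eta$-sparse family, and we decompose
\[T^{Q_0}f(x) = S_1(x) + S_2(x) + S_3(x) + \sum_{j} T^{Q_j}(f\1_{Q_j})(x)\1_{Q_j}(x),\]
with $S_1, S_2 \in C\dang{f}_{Q_0}\1_{Q_0}$ by the stopping time and the last sum handled recursively.

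The crux is the estimate of $S_3 = \sum_j T_{\sq{Q}_j} f \1_{Q_j}$. Expanding each $T_{\sq{Q}_j}f(x)\1_{Q_j}(x)$ as a double sum over $J \in \text{ch}((\sq{Q}_j)_1)$ and $K \in \text{ch}((\sq{Q}_j)_2)$ with $K \subset Q_j$, the $L^1$-normalization yields the uniform bound $|c_{\sq{Q}_j} \alpha_J \alpha_K \mu(J)| \leq \mu(J)/\mu(\sq{Q}_j) \leq 1$, which is the crucial replacement for the $\sqrt{m(J)m(K)}$ bound of the balanced proof. Running the same case analysis on $J$ as in \cref{T:thm A 1st part}: in the case $\ang{f}_J \in C\dang{f}_{Q_0}$ the telescoping $\sum_J \mu(J)/\mu(\sq{Q}_j) \leq 1$ absorbs the contribution into $C\dang{f}_{Q_0}\1_{Q_j}$; in the complementary case, $J \subsetneq S \subsetneq \sq{Q}_j$ for some iterated stopping cube $S \in \mc{S}$ with $\ang{f}_J \in C\dang{f}_S$. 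The main obstacle I foresee is in this second case: one must show that, after grouping the $J$'s feeding into a common $S$ via $\sum_{J : S(J) = S} \mu(J) \leq \mu(S)$ and exploiting that both $S$ and $K \in \D^t(Q_j) \subset \mc{S}$ share the common ancestor $\sq{Q}_j$, the resulting cross terms $\dang{f}_S \1_K$ can be reorganized into diagonal $\dang{f}_P \1_P$ pieces for $P \in \mc{S}$, possibly after a further enlargement of the sparse family. Iterating the whole construction at each level of the sparse tree then produces the full convex body domination.
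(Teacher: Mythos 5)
Your proposal correctly identifies the two roles played by the $L^1$-normalization (the weak $(1,1)$ bound with linear complexity dependence, and the improved kernel coefficient bound $|c_Q\alpha_J\alpha_K|\lesssim 1/\mu(Q)$), but it diverges from the paper at a decisive point and, as written, contains a genuine gap.

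The gap is precisely the one you flag in your last paragraph, and it is not a technicality you can paper over. Running the stopping time of \cref{T:thm A 1st part} \emph{unchanged}, the case $\ang{f}_J\notin C\dang{f}_{Q_0}$ forces you to iterate down to some $S\in\mc{S}$ with $\ang{f}_J\in C\dang{f}_S$, producing cross terms proportional to $\dang{f}_S\1_K$ with $S\neq K$ both in the sparse family. These cannot in general be absorbed into diagonal pieces $\dang{f}_P\1_P$: one can show $\frac{\mu(J)}{\mu(\sq{Q}_j)}\dang{f}_S\subset d\,\dang{f}_{\sq{Q}_j}$, but $\sq{Q}_j$ is an \emph{ancestor} of $Q_j$ and need not lie in $\mc{S}$, and enlarging a sparse family by adding ancestors is not permissible for a general Radon measure $\mu$ (a nested chain $Q_1\supset Q_2\supset\cdots$ in $\mc{S}'$ can have parents $\widehat{Q}_j$ that are all distinct with $\sum_j\mu(\widehat{Q}_j)=\infty$, destroying the Carleson condition). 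Trying instead to push $\dang{f}_S$ all the way up to $\dang{f}_{Q_0}$ fails because the original stopping condition only controls the signed averages $|\ang{f\cdot e_i}_{\sq{Q}_j}|$, not the absolute averages $\ang{|f\cdot e_i|}_{\sq{Q}_j}$, and the latter is what governs $\ang{|h_{S_j}|\,|f\cdot e_i|}_{\sq{Q}_j}$ since $h_{S_j}$ oscillates in sign.

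The paper's fix is different and cleaner: it \emph{modifies the stopping condition} \eqref{E: stop2}, replacing $\ang{f}_J\notin C\dang{f}_{Q'}$ by the component-wise absolute-average condition $\ang{|f\cdot e_i|}_J>\frac{C}{d}\ang{|f\cdot e_i|}_{Q'}$ for some $i$, where $\{e_i\}$ is the orthonormal basis attached to the John ellipsoid of $\dang{f}_{Q'}$. This is a stronger condition (more cubes are selected), yet the weak $(1,1)$ bound for the dyadic maximal function still controls the measure of the stopping set. The payoff is that $\sq{Q}_j$, which strictly contains the maximal stopping cube $Q_j$ and is therefore \emph{not} stopped, satisfies $\ang{|f\cdot e_i|}_{\sq{Q}_j}\leq\frac{C}{d}\ang{|f\cdot e_i|}_{Q_0}$ for \emph{every} $i$. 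Combining this with the $L^1$-normalization coefficient bound $|c_{\sq{Q}_j}|\lesssim\bigl(\mu(\sq{Q}_j)\|h_{S_j}\|_\infty\|h_{T_j}\|_\infty\bigr)^{-1}$ and with \cref{lem:convex_body_components}, the entire piece $T_{\sq{Q}_j}f\,\1_{Q_j}$ lands in $C\dang{f}_{Q_0}\1_{Q_j}$ in one stroke — there is no expansion over children, no case $\ang{f}_J\notin C\dang{f}_{Q_0}$, and no cross terms at all. That modification of the stopping time is the missing idea in your argument.
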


\begin{proof} 
The proof goes exactly as the one of \cref{T:thm A 1st part}, but we have to modify the stopping condition \eqref{E: stop2}, while Lemma \ref{T:weak (1,1) for L^1 norm} combined with \cref{lem:convex_body_components} allows us to use \eqref{E: stop1}. Notice that \eqref{E: stop2} essentially plays a role only when we have to deal with the term $T_{\widetilde{Q_j}}f(x) \1_{Q_j}(x)$ as in \eqref{E: refer here for L1 norm}, where $Q_j$ is a selected cube and $\widetilde{Q_j}$ is its first ancestor in $\widetilde{\D}$. Our goal is to show that, using the extra properties of $L^1$ normalized shifts, we have that
\begin{equation} \label{E: stopping term L^1 norm}T_{\widetilde{Q_j}}f(x) \1_{Q_j}(x) \in C \dang{f}_{Q_0}\1_{Q_j}(x).\end{equation}
For an interval $Q' \in \D(Q_0)$, let $\mc{Q}_1(Q')$ be the collection of intervals satisfying \eqref{E: stop1}, for which we have as before 
\[\sum_{J \in \mc{Q}_1(Q')}\mu(J) \leq \frac{1}{4}\mu(Q').\]
We substitute \eqref{E: stop2} as follows. Let $\{e_i\}_{i = 1}^{d}$ be an orthonormal basis associated to the John ellipsoid $\mc{E}$ of $\dang{f}_{Q'}$, and let $\mc{Q}_2(Q')$ be the collection of maximal intervals $J \in \D(Q')$ such that \begin{equation} \label{E: new stop 2}
  \ang{|f \cdot e_i|}_J > \frac{C}{d}\ang{|f \cdot e_i|}_{Q'}, \quad \text{ for some $i=1, \dots,d$}.  
\end{equation}
Let us comment about this twist in the proof. The vector stopping condition \eqref{E: stop2} also carried information about the components of the selected vector averages: indeed, the selected cubes $J$ verified $\ang{f}_J \notin C \dang{f}_{Q'}$, that implies by \cref{lem:convex_body_components} \[ | \ang{ f \cdot e_i }_J| > \frac{C}{d} \ang{|f \cdot e_i|}_{Q'}, \quad \text{ for some $i=1, \dots, d$}.\]
Informally, as of course this last condition implies \eqref{E: new stop 2}, we are now selecting more cubes than before and the ones that we didn't select will behave better. Nevertheless, using the weak $(1,1)$ for the dyadic maximal function and choosing $C$ big enough as before, we still have that
\[\sum_{J \in \mc{Q}_2(Q')}\mu(J) \leq \frac{1}{4}\mu(Q').\]

Indeed, denoting $E(Q')= \bigcup_{J \in \mc{Q}_2(Q')}J$, if $E_i(Q')$ is the union of maximal intervals selected by \eqref{E: new stop 2} for a fixed $i \in \{ 1, \dots, d\}$, we have   
\[\sum_{J \in \mc{Q}_2(Q')}\mu(J) = \mu(E(Q')) \leq \sum_{i=1}^d \mu( E_i(Q')) \leq \|M_D\|_{L^1 \to L^{1, \infty}}\frac{d^2}{C} \mu(Q') \leq \frac{1}{4} \mu(Q'),\] 
choosing $C \geq  4d^2\|M_{\mathcal{D}}\|_{L^1 \to L^{1, \infty}}$. All together, if $\mc{B}(Q'):=\mc{Q}_1(Q') \cup \mc{Q}_2(Q')$, we still have that 
 \[\sum_{J \in \mc{B}(Q')}\mu(J) \leq \frac{1}{2}\mu(Q').\]
Now we show how the modified stopping condition helps us to prove \eqref{E: stopping term L^1 norm}. As $T$ is $L^1$ normalized
\begin{align*}
    T_{\sq{Q}_j}f(x)\1_{Q_j}(x)= & c_{\widetilde{Q_j}} \langle f,h_{S_j}\rangle h_{T_j}(x)\1_{Q_j}(x) \\ = &  c_{\widetilde{Q_j}} \mu(\widetilde{Q_j}) \|h_{S_j}\|_{L^\infty(\mu)} \left\langle \frac{h_{S_j}}{\|h_{S_j}\|_{L^\infty(\mu)}}f\right\rangle_{\widetilde{Q_j}}  \,h_{T_j}(x)\1_{Q_j}(x) 
\end{align*}
 If we prove that 
\begin{equation} \label{E: final step L^1 norm} v:= \left\langle \frac{h_{S_j}}{\|h_{S_j}\|_{\infty}}f\right\rangle_{\widetilde{Q_j}}  \in C(d) \dang{f}_{Q_0},\end{equation}
then, we can conclude  
 \begin{align*}
   T_{\sq{Q}_j}f(x)\1_{Q_j}(x) \in  \ & C(d) c_{\widetilde{Q_j}} \mu(\widetilde{Q_j}) \|h_{S_j}\|_{\infty}\|h_{T_j}\|_{\infty}\dang{f}_{Q_0} \1_{Q_j}(x) \\
   \in 
   \ & C(d) \dang{f}_{Q_0} \1_{Q_j}(x),
 \end{align*}   
where we used that from  $\|K_{\widetilde{Q_j}}\|_{\infty}\lesssim \mu(\widetilde{Q_j})^{-1}$,

$$|c_{\widetilde{Q_j}}| \lesssim 
\frac{1}{\mu(\widetilde{Q_j}) \|h_{S_j}\|_{\infty}\|h_{T_j}\|_{\infty}}.$$

 We are left with proving \eqref{E: final step L^1 norm}. Let $\{e_i\}_{i = 1}^{d}$ be an orthonormal basis associated to the John ellipsoid $\mc{E}$ of $\dang{f}_{Q_0}$. By the stopping condition we have that \[ \ang{|f \cdot e_i|}_{\widetilde{Q_j}} \leq \frac{C}{d}\ang{|f \cdot e_i|}_{Q_0}, \quad \text{ for every $i=1, \dots,d$}. \] 
 For every $i= 1, \dots, d$, we have that
\begin{align*}|v \cdot e_i|\leq \ & \left\langle |f \cdot e_i| \frac{|h_{S_j}|}{\|h_{S_j}\|_{\infty}}\right\rangle_{\widetilde{Q_j}} \\
\leq \ &   \LL |f \cdot e_i| \RR_{\widetilde{Q_j}} \\
\leq \ & \frac{C}{d} \LL |f \cdot e_i| \RR_{Q_0} \\
\lesssim \ & d  \ \LL |f \cdot e_i| \RR_{Q_0}.
\end{align*}
Applying \cref{lem:convex_body_components} we get \eqref{E: final step L^1 norm} and \eqref{E: stopping term L^1 norm} follows.

\end{proof}

\section{Matrix weighted estimates} \label{section 4}
In this section, we prove \cref{MatrixWeightedThm} and \cref{GeneralizedCarlesonEmbedding}. We will be working with an arbitrary Radon measure $\mu$ on $\R^n$ and a dyadic lattice $\mc{D}$ of $\R^n$. Fix $f \in L^p(\R^n; \R^d)$ and $g \in L^{p'}(\R^n; \R^d)$. To bound the sparse form 
\[\mc{A}_{\mc{S}}(V^{1/p'}f, W^{1/p}g) = \sum_{Q \in \mc{S}} \dashint_{Q}\dashint_{Q} |V^{1/p'}(x)f(x) \cdot W^{1/p}(y)g(y)| \, d\mu(x) \,d\mu(y) \, \mu(Q),\]
we split the expression up into two sparse operators. Consider the reducing operators $\mc{W}_Q$ and $\mc{V}_Q$ as in Proposition \ref{reducing operators} so that
\[|\mc{W}_Qe| \sim \left(\dashint_{Q}|W^{1/p}e|^p \, d\mu\right)^{1/p}, \quad |\mc{V}_Qe| \sim \left(\dashint_{Q}|V^{1/p'}e|^{p'} \, d\mu\right)^{1/p'},\,\text{ for all }e\in \R^d.\]

We omit the $p$ and $p'$ from the notation of the reducing operators for convenience. We have that
\begin{align*}
    |V^{1/p'}(y)f(y) \cdot W^{1/p}(x)g(x)| & = |\mc{V}_Q\mc{V}_Q^{-1}V^{1/p'}(y)f(y) \cdot \mc{W}_{Q}\mc{W}_{Q}^{-1}W^{1/p}(x)g(x)| \\
    & = |\mc{W}_Q\mc{V}_{Q}\mc{V}_Q^{-1}V^{1/p'}(y)f(y) \cdot \mc{W}_{Q}^{-1}W^{1/p}(x)g(x)| \\
    & \leq \bm{|}\mc{W}_Q\mc{V}_Q \bm{|} \, |\mc{V}_Q^{-1}V^{1/p'}(y)f(y)| \, |\mc{W}_Q^{-1}W^{1/p}(x)g(x)| \\
    & \lesssim_d [W, V]_{A_p}^{1/p} \, |\mc{V}_Q^{-1}V^{1/p'}(y)f(y)| \, |\mc{W}_Q^{-1}W^{1/p}(x)g(x)|.
\end{align*}
Here we used that the reducing operators are symmetric and \cref{reducing operators}. Therefore
\begin{align*}
    \mc{A}_{\mc{S}}(V^{1/p'}f, W^{1/p}g) & \lesssim_d [W, V]_{A_p}^{1/p} \sum_{Q \in \mc{S}}\dashint_{Q}\dashint_{Q}|\mc{V}_Q^{-1}V^{1/p'}(y)f(y)| \, |\mc{W}_Q^{-1}W^{1/p}(x)g(x)| d\mu(x) \, d\mu(y) \, \mu(Q) \\
    & = [W, V]_{A_p}^{1/p} \sum_{Q \in \mc{S}} \ang{|\mc{V}_Q^{-1}V^{1/p'}f|}_Q\ang{|\mc{W}_Q^{-1}W^{1/p}g|}_Q\mu(Q) \\
    & \leq [W, V]_{A_p}^{1/p} \|S^V_{p} f\|_{L^{p}} \|S^W_{p'}g\|_{L^{p'}}
\end{align*}
where 
\begin{equation}\label{eqn:sparse_square_1}
    S_{p}^V f(x) = \left(\sum_{Q \in \mc{S}}  \ang{\bv{\mc{V}_Q^{-1}V^{1/p'}}\,|f|}_Q^p \1_Q(x)\right)^{1/p}
\end{equation}
and 
\begin{equation}\label{eqn:sparse_square_2}
    S^{W}_{p'} g(x) = \left(\sum_{Q \in \mc{S}}  \ang{\bv{\mc{W}_Q^{-1}W^{1/p}} \, |g|}_Q^{p'} \1_Q(x)\right)^{1/p'}.
\end{equation}
These operators depend only on the norm of the vector function $f$. Thus to derive bounds on $L^p(\R^n; \R^d)$, it suffices to derive bounds on $L^p(\R^n; \R)$. We will show that
\begin{lemma}\label{lem:square_function_estimate}
    Let $W$ be a matrix weight satisfying the $A_{p,\infty}^{sc}$ condition and let $\mc{S}$ be an $\eta$-sparse family. Define $S_{p'}^{W}$ by \cref{eqn:sparse_square_2}. Then 
    \[\|S_{p'}^{W}g\|_{L^{p'}} \lesssim_{d, \eta} [W]_{A_{p, \infty}^{sc}}^{1/p'}\|g\|_{L^{p'}}.\]
\end{lemma}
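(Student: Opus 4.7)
The plan is to reduce the lemma to an application of the generalized Carleson embedding theorem (\cref{GeneralizedCarlesonEmbedding}). Unraveling the definition of $S_{p'}^{W}$,
\[\|S_{p'}^{W}g\|_{L^{p'}}^{p'} = \sum_{Q \in \mc{S}}\ang{\bv{\mc{W}_Q^{-1}W^{1/p}}\,|g|}_Q^{p'}\,\mu(Q),\]
so it is natural to choose the scalar weights $w_Q(x) := \bv{\mc{W}_Q^{-1}W^{1/p}(x)}^p$ and the coefficients $\alpha_Q := \mu(Q)\,\1_{\mc{S}}(Q)$, and to invoke \cref{GeneralizedCarlesonEmbedding} with the roles of $p$ and $p'$ exchanged (permitted since both exponents lie in $(1,\infty)$). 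Then the conclusion of the Carleson embedding, applied with exponent $p'$ to the integrand $|g|$, would be exactly the desired estimate after taking a $p'$-th root.

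The starting point for verifying the two hypotheses is the pointwise equivalence
\[w_P(x) \sim_d \sum_{i=1}^{d}|W^{1/p}(x)\mc{W}_P^{-1}e_i|^p,\]
valid for any orthonormal basis $\{e_i\}$ of $\R^d$; this follows from the standard comparison between the operator norm of a $d \times d$ matrix and the $\ell^p$-sum of the lengths of its columns. Combining this with \cref{reducing operators} gives $\ang{w_Q}_Q \sim_d 1$ and, more generally for $Q \subset P$,
\[\ang{w_P}_Q \sim_d \sum_{i=1}^{d}|\mc{W}_Q\mc{W}_P^{-1}e_i|^p \sim_d \bv{\mc{W}_P^{-1}\mc{W}_Q}^p.\]
On the other hand, factoring $\mc{W}_P^{-1}W^{1/p}(x) = (\mc{W}_P^{-1}\mc{W}_Q)(\mc{W}_Q^{-1}W^{1/p}(x))$ and applying submultiplicativity of the operator norm yields the pointwise bound $w_P(x)/w_Q(x) \leq \bv{\mc{W}_P^{-1}\mc{W}_Q}^p$ on $Q$. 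Together these three estimates produce the compatibility condition \cref{eqn:compatibility} with a constant $A = A(d)$.

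The main technical step, which I expect to be the crux, is verifying the testing condition
\[\frac{1}{\mu(P)}\sum_{Q \in \mc{S}(P)}\ang{w_P}_Q\,\ang{w_Q}_Q^{p'-1}\,\mu(Q) \lesssim_d [W]_{A_{p,\infty}^{sc}}\,\ang{w_P}_P.\]
Using $\ang{w_Q}_Q^{p'-1} \sim_d 1$, the $\eta$-sparsity of $\mc{S}$ (to replace $\mu(Q)$ by $\eta^{-1}\mu(E_Q)$ for pairwise disjoint $E_Q \subset Q$), and the pointwise estimate $\ang{w_P}_Q \leq M_P^{\mc{D}}(w_P\chi_P)(x)$ for $x \in Q$, the left side is dominated by $\eta^{-1}\mu(P)^{-1}[w_P]_{A_\infty}\,w_P(P)$. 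It therefore remains to show that $[w_P]_{A_\infty} \lesssim_d [W]_{A_{p,\infty}^{sc}}$ uniformly in $P$. By the equivalence above, $w_P$ is comparable to the sum of scalar weights $u_i(x) := |W^{1/p}(x)\mc{W}_P^{-1}e_i|^p$, each of which has Fujii--Wilson $A_\infty$ characteristic bounded by $[W]_{A_{p,\infty}^{sc}}$ by definition; and the Fujii--Wilson constant satisfies the elementary subadditivity $[u_1+\cdots+u_d]_{A_\infty} \leq \max_i[u_i]_{A_\infty}$, an immediate consequence of the sublinearity of the dyadic maximal function. Since $A_\infty$ is stable under pointwise equivalence, this yields the required bound on $[w_P]_{A_\infty}$.

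With both hypotheses verified (with $A \lesssim_d 1$ and $C_2 \lesssim_d [W]_{A_{p,\infty}^{sc}}$), \cref{GeneralizedCarlesonEmbedding} produces
\[\sum_{Q \in \mc{S}}\ang{w_Q^{1/p}\,|g|}_Q^{p'}\,\mu(Q) \lesssim_{p,d,\eta} [W]_{A_{p,\infty}^{sc}}\,\|g\|_{L^{p'}}^{p'},\]
and extracting the $1/p'$-th root delivers the lemma with the stated dependence $[W]_{A_{p,\infty}^{sc}}^{1/p'}$ on the weight characteristic.
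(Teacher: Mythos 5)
Your proposal is correct and follows essentially the same route as the paper's proof: identical choice of variable weights $w_Q = \bv{\mc{W}_Q^{-1}W^{1/p}}^p\1_Q$ and coefficients $\alpha_Q = \mu(Q)\1_{\mc{S}}(Q)$, the same factorization through $\mc{W}_P^{-1}\mc{W}_Q$ to verify the compatibility condition \eqref{eqn:compatibility} with $A\lesssim_d 1$, and the same reduction of the testing condition to the bound $[w_P]_{A_\infty}\lesssim_d [W]_{A_{p,\infty}^{sc}}$ via the decomposition of $w_P$ into scalar directional weights and the subadditivity $[v_1+\cdots+v_m]_{A_\infty}\leq\max_j[v_j]_{A_\infty}$. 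The only cosmetic difference is that you state explicitly that \cref{GeneralizedCarlesonEmbedding} is applied with $p$ and $p'$ exchanged, which the paper leaves implicit (but uses, as evidenced by the $p'-1$ exponent in its testing-condition check).
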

This lemma will follow from \cref{GeneralizedCarlesonEmbedding}. To motivate this theorem, we recall the dyadic Carleson Embedding Theorem.
\begin{theorem}[Dyadic Carleson Embedding Theorem]\label{thm:carleson_embedding}
    Let $\mu$ be a Radon measure on $\R^n$ and let $\{\alpha_Q\}_{Q \in \mc{D}}$ be a sequence of real numbers indexed by $\mc{D}$. Then for $1 < p < \infty$,
    \begin{gather}
            \sum_{Q \in \mc{D}}\ang{|f|}_Q^{p}\alpha_Q \leq C_1 \|f\|_{L^{p}}^{p} \quad \text{for all} \quad  f \in L^p(\mu) \\ 
            \nonumber \text{if and only if}  \\ 
            \frac{1}{\mu(P)}\sum_{Q \in \mc{D}(P)}\alpha_Q \leq C_2  \quad \text{for all} \quad P \in \mc{D}
    \end{gather}
    Moreover, if $C_1$ and $C_2$ are the best constants in the above inequalities, then $C_2 \leq C_1 \lesssim_p C_2$.
\end{theorem}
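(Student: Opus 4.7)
The plan is to prove the two implications separately. For necessity ($C_2 \leq C_1$), I would test the hypothesized $L^p$ inequality on indicator functions: plugging $f = \mathbf{1}_P$ into the embedding inequality for any fixed $P \in \mc{D}$ gives $\ang{|\mathbf{1}_P|}_Q = 1$ whenever $Q \subseteq P$ and $\|\mathbf{1}_P\|_{L^p}^p = \mu(P)$, so the inequality collapses to $\sum_{Q \in \mc{D}(P)}\alpha_Q \leq C_1 \mu(P)$, which yields $C_2 \leq C_1$. Without loss of generality one may assume $\alpha_Q \geq 0$, since only positive coefficients contribute to both sides as stated.

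For sufficiency ($C_1 \lesssim_p C_2$), I would invoke the classical principal-cubes / sparse-family construction adapted to the nonhomogeneous setting. Fix $f \in L^p(\mu)$ and build $\mathcal{F} \subset \mc{D}$ inductively: start by placing the relevant top-level cubes in $\mathcal{F}$, and for each $F \in \mathcal{F}$ declare as its $\mathcal{F}$-children the maximal $Q \subsetneq F$ with $\ang{|f|}_Q > 2\ang{|f|}_F$. A direct argument based on the stopping inequality shows $\sum_{F' \in \text{ch}_\mathcal{F}(F)}\mu(F') \leq \tfrac{1}{2}\mu(F)$, so the sets $E_F := F \setminus \bigcup_{F' \in \text{ch}_\mathcal{F}(F)} F'$ are pairwise disjoint and satisfy $\mu(E_F) \geq \tfrac{1}{2}\mu(F)$. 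For every $Q \in \mc{D}$, let $\pi(Q) \in \mathcal{F}$ be the minimal principal cube containing $Q$; then $\ang{|f|}_Q \leq 2\ang{|f|}_{\pi(Q)}$ by construction.

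Regrouping the sum by the principal cube and using the Carleson hypothesis on each stopping block produces
\[\sum_{Q \in \mc{D}}\ang{|f|}_Q^p \alpha_Q \;\leq\; 2^p \sum_{F \in \mathcal{F}}\ang{|f|}_F^p \sum_{Q : \pi(Q) = F}\alpha_Q \;\leq\; 2^p C_2 \sum_{F \in \mathcal{F}}\ang{|f|}_F^p \mu(F).\]
Now I would combine $\mu(F) \leq 2\mu(E_F)$ with the pointwise bound $\ang{|f|}_F \leq Mf(x)$ for $x \in E_F$ (where $M$ is the dyadic Hardy--Littlewood maximal operator associated to $\mu$) to deduce that the right-hand side is bounded by $\lesssim C_2 \int (Mf)^p \, d\mu \lesssim_p C_2 \|f\|_{L^p}^p$, giving $C_1 \lesssim_p C_2$.

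The main (and essentially only) quantitative obstacle is the $p$-dependence inherited from the dyadic maximal theorem; the principal-cube decomposition is insensitive to the measure and requires no doubling assumption, which matches the nonhomogeneous framework of the paper. This argument also serves as the blueprint for Theorem \ref{GeneralizedCarlesonEmbedding}: the collection $\{w_Q\}$ with the compatibility condition \eqref{eqn:compatibility} will play the role of the trivially compatible family $w_Q \equiv 1$ used here, the constant $A$ entering only when one must transfer averages of $w_P$ inside smaller principal cubes $F$.
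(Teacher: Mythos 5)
Your proof is essentially correct, but it is worth pointing out that the paper does not actually prove Theorem~\ref{thm:carleson_embedding} directly: it recalls it as a classical result and obtains it as the special case $w_Q \equiv 1$ (so $A=1$) of Theorem~\ref{GeneralizedCarlesonEmbedding}, which it \emph{does} prove. The two arguments are genuinely different. You use the principal-cubes (stopping-time) construction, dominate $\langle |f|\rangle_{\pi(Q)}$ by the dyadic maximal function on the disjoint sets $E_F$, and close via the $L^p(\mu)$ bound for $M^{\D}$. The paper instead dualizes: it studies $H^*x=\sum_Q x_Q w_Q^{1/p'}\alpha_Q$, expands $\|H^*x\|_{L^{p'}}^{p'}$ with the multinomial-type inequality of Lemma~\ref{lem:expanding_sum}, inserts the Carleson testing condition after reorganizing the multi-indexed sum, and absorbs via H\"older. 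This duality route is what lets the compatibility constant $A$ from \eqref{eqn:compatibility} thread cleanly through the estimate; your last sentence, that the stopping-time argument "serves as the blueprint" for Theorem~\ref{GeneralizedCarlesonEmbedding}, is therefore not accurate. With a variable family $\{w_Q\}$ there is no single weighted maximal function controlling all the averages $\langle w_Q^{1/p'}f\rangle_Q$, and the principal-cube machinery does not adapt in any obvious way. So: valid proof, but a materially different strategy than the one the paper uses for the generalization it cares about.

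Two points in your write-up should be tightened. First, the reduction "WLOG $\alpha_Q\ge 0$" is not a reduction at all: the forward direction (testing $f=\mathbf 1_P$) genuinely requires nonnegativity, since dropping negative $\alpha_Q$ on the left of \eqref{eqn:carleson_condition_1} weakens the hypothesis but would also weaken the target conclusion $\sum_{Q\in\mc D(P)}\alpha_Q\le C_1\mu(P)$. This should be stated as a running assumption (as is standard for Carleson embeddings). Second, "place the relevant top-level cubes in $\mathcal F$" hides a real issue: if $f$ has compact support in $Q_0$, the cubes $Q\supsetneq Q_0$ still contribute to $\sum_{Q\in\mc D}\langle |f|\rangle_Q^p\alpha_Q$ and are not covered by any principal cube. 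The cleanest fix is to first prove the bound for the truncated sum over $\mc D(Q_0)$ (taking $Q_0\in\mathcal F$ as the root), obtain a constant independent of $Q_0$, and pass to the limit $Q_0\nearrow\R^n$ by monotone convergence; alternatively, one can estimate the chain $\sum_{k\ge 1}\langle|f|\rangle_{Q_0^{(k)}}^p\alpha_{Q_0^{(k)}}$ directly by summation by parts against the Carleson partial sums $\sum_{j\le k}\alpha_{Q_0^{(j)}}\le C_2\mu(Q_0^{(k)})$, which yields $\lesssim_p C_2\langle|f|\rangle_{Q_0}^p\mu(Q_0)\le C_2\|f\|_{L^p}^p$. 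Either way, this needs to be said; it is not automatic in the nonhomogeneous setting because $\mu(Q_0^{(k)})$ may grow arbitrarily slowly.
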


If $w$ is a weight, (i.e. $w > 0$ a.e. and $w \in L^1_{loc}(\mu)$), then by replacing $\mu$ with $w \, d\mu$ and $f$ with $fw^{\frac{-1}{p}}$ in \cref{thm:carleson_embedding}, we have the alternate (weighted) formulation of the Carleson Embedding Theorem:
\begin{gather}
    \sum_{Q \in \mc{D}}\ang{w^{1/p'}f}_Q^{p}\alpha_Q \leq C_1 \|f\|_{L^{p}}^{p} \quad \text{for all} \quad  f \in L^p(\mu) \\ 
    \nonumber \text{if and only if}  \\ 
    \frac{1}{\mu(P)}\sum_{Q \in \mc{D}(P)}\ang{w}_Q^{p}\alpha_Q \leq C_2 \ang{w}_P \quad \text{for all} \quad P \in \mc{D}
\end{gather}

In the scalar case (i.e. if $d = 1$) \cref{lem:square_function_estimate} follows from this formulation as the reader will easily check. However, in the vector case, the operator $S_{p'}^W$ depends in a more complicated way on the underlying weight $W$ so \cref{thm:carleson_embedding} doesn't apply. This motivates the generalization of the Carleson Embedding Theorem. The idea is that we take the weighted formulation above, and instead of having only one weight $w$, we have a collection of weights $\{w_Q\}_{Q \in \mc{D}}$ indexed by the dyadic cubes. We restate \cref{GeneralizedCarlesonEmbedding} here.

\begin{theorem}\label{thm:carleson_embedding_general}
    Let $\{w_Q\}_{Q \in \mc{D}}$ be a collection of scalar weights indexed by $\mc{D}$ so that each $w_Q$ is localized to $Q$ (i.e. $w_Q \in L^1_{loc}(\mu)$ and $w_Q > 0$ $\mu$-almost everywhere on $Q$) and so that the following property is satisfied:
    \begin{equation}\label{eqn:weight_condition}
        \|w_P w_{Q}^{-1}\|_{L^\infty(Q)} \leq A\ang{w_{P}}_{Q}\ang{w_{Q}}_{Q}^{-1} \quad \text{for all} \quad Q,P \in \mc{D}, \quad Q \subset P
    \end{equation}
    Let $\{\alpha_Q\}_{Q \in \mc{D}}$ be a sequence of real numbers indexed by $\mc{D}$. Then for $1 < p < \infty$,
    \begin{gather}
            \sum_{Q \in \mc{D}}\ang{w_Q^{1/p'}f}_Q^{p}\alpha_Q \leq C_1 \|f\|_{L^{p}}^{p} \quad \text{for all} \quad  f \in L^p(\mu) \label{eqn:carleson_condition_1} \\ 
            \nonumber \text{if and only if}  \\ 
            \frac{1}{\mu(P)}\sum_{Q \in \mc{D}(P)}\ang{w_P}_{Q}\ang{w_Q}_Q^{p-1}\alpha_Q \leq C_2 \ang{w_P}_P \quad \text{for all} \quad P \in \mc{D} \label{eqn:carleson_condition_2}
    \end{gather}
    Moreover, if $C_1$ and $C_2$ are the best constants in the above inequalities, then, 
    \[A^{-(p-1)}C_2 \leq C_1 \lesssim_{p} A^{1 + 1/p'}C_2.\]
\end{theorem}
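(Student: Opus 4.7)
The plan is to establish the two directions separately: the necessity $A^{-(p-1)}C_2 \leq C_1$ will follow from a direct test-function argument, while the sufficiency $C_1 \lesssim_p A^{1+1/p'}C_2$ will require a Corona-type decomposition that exploits the one-sided compatibility \eqref{eqn:weight_condition}.

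For necessity, I would fix $P \in \mc{D}$ and test \eqref{eqn:carleson_condition_1} with the function $f = w_P^{1/p}\1_P$, for which $\|f\|_{L^p}^p = \ang{w_P}_P \, \mu(P)$. On each $Q \subseteq P$, rearranging \eqref{eqn:weight_condition} produces the pointwise lower bound
\[
w_Q^{1/p'}(x) \;\geq\; A^{-1/p'}\bigl(\ang{w_Q}_Q/\ang{w_P}_Q\bigr)^{1/p'}\, w_P^{1/p'}(x), \qquad x \in Q,
\]
and multiplying by $w_P^{1/p}$ and averaging yields $\ang{w_Q^{1/p'} w_P^{1/p}}_Q^p \geq A^{-(p-1)}\ang{w_Q}_Q^{p-1}\ang{w_P}_Q$. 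Multiplying by $\alpha_Q$, summing over $Q \subseteq P$, and comparing with \eqref{eqn:carleson_condition_1} applied to $f$ gives $C_2 \leq A^{p-1}C_1$.

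For sufficiency, by monotone convergence I may assume $f \geq 0$ is bounded and supported in a single cube $P_0 \in \mc{D}$, and set $\mu' := w_{P_0}\,d\mu$. The first step is to transfer \eqref{eqn:carleson_condition_2} from the varying weights $\{w_P\}$ to the common ambient weight $w_{P_0}$: applying \eqref{eqn:weight_condition} to the pair $(P_0,P)$ gives $\ang{w_{P_0}}_Q \leq A\ang{w_{P_0}}_P\ang{w_P}_P^{-1}\ang{w_P}_Q$ for every $Q \subseteq P \subseteq P_0$, and substituting into \eqref{eqn:carleson_condition_2} shows that the sequence
\[
\beta_Q := \ang{w_{P_0}}_Q\ang{w_Q}_Q^{p-1}\alpha_Q
\]
is $AC_2$-Carleson with respect to $\mu'$, i.e.\ $\sum_{Q \subseteq P} \beta_Q \leq AC_2\,\mu'(P)$ for every $P \subseteq P_0$. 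Writing $\tilde f := f w_{P_0}^{-1/p}$, so that $\|\tilde f\|_{L^p(\mu')} = \|f\|_{L^p}$, I would then use the exact identity
\[
\ang{w_Q^{1/p'} f}_Q \;=\; \ang{w_{P_0}}_Q \, \ang{(w_Q/w_{P_0})^{1/p'}\tilde f}_{Q,\mu'},
\]
combined with a principal-cubes stopping time on $\tilde f$ relative to $\mu'$ and a careful Hölder estimate invoking \eqref{eqn:weight_condition} once more, to reduce the problem to the classical $L^p(\mu')$-boundedness of the dyadic maximal function and obtain the asserted bound $A^{1+1/p'}C_2$.

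The main obstacle lies in this last step: the compatibility \eqref{eqn:weight_condition} is intrinsically one-sided, bounding $w_{P_0}/w_Q$ in $L^\infty(Q)$ but giving no pointwise upper bound for $w_Q/w_{P_0}$. Consequently, the factor $(w_Q/w_{P_0})^{1/p'}$ cannot be pulled out of $\ang{(w_Q/w_{P_0})^{1/p'}\tilde f}_{Q,\mu'}$ by a trivial $L^\infty$ estimate, and a naive Hölder application replaces the first-power $\mu'$-average of $\tilde f$ required by the classical embedding by the larger $p$-th power average. Closing this gap requires a delicate interplay between the stopping-time decomposition, the compatibility bound, and the Carleson property of $\{\beta_Q\}$, which together are responsible for the extra factor $A^{1/p'}$ in the final constant.
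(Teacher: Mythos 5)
Your treatment of the necessity direction is correct and essentially the same as the paper's: testing with $f = w_P^{1/p}\1_P$, using the compatibility condition to convert a pointwise bound into a lower bound on $\ang{w_Q^{1/p'}w_P^{1/p}}_Q$, and computing $\|f\|_{L^p}^p = \ang{w_P}_P\mu(P)$ gives exactly $C_2 \leq A^{p-1}C_1$.

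Your sufficiency direction, however, contains a genuine gap that you yourself flag without closing. The reduction to the ambient weight $w_{P_0}$ and the measure $\mu' = w_{P_0}\,d\mu$ is sound as far as it goes: the Carleson transfer of the coefficients $\beta_Q = \ang{w_{P_0}}_Q\ang{w_Q}_Q^{p-1}\alpha_Q$ to $\mu'$ costs a factor $A$, and the identity $\ang{w_Q^{1/p'}f}_Q = \ang{w_{P_0}}_Q\ang{(w_Q/w_{P_0})^{1/p'}\tilde f}_{Q,\mu'}$ is exact. But the next step is the whole problem. The one-sided compatibility condition only gives a pointwise \emph{lower} bound on $w_Q/w_{P_0}$ on $Q$, so the factor $(w_Q/w_{P_0})^{1/p'}$ cannot be extracted in $L^\infty$. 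If one applies H\"older inside the $\mu'$-average, one gets $\ang{(w_Q/w_{P_0})^{1/p'}\tilde f}_{Q,\mu'}^p \leq (\ang{w_Q}_Q/\ang{w_{P_0}}_Q)^{p-1}\ang{\tilde f^p}_{Q,\mu'}$, which collapses the sum to $\sum_Q\beta_Q\ang{\tilde f^p}_{Q,\mu'}$ — a linear (first-power) pairing of $\tilde f^p \in L^1(\mu')$ against the coefficients. By Fubini this equals $\int |\tilde f|^p \sum_{Q\ni x}\beta_Q/\mu'(Q)\,d\mu'$, which requires a uniform bound on the \emph{pointwise} sum $\sum_{Q\ni x}\beta_Q/\mu'(Q)$, a strictly stronger condition than Carleson; the Carleson condition does not yield it. Nor can a stopping-time argument rescue the estimate, because the unboundedness of $w_Q/w_{P_0}$ above means the modified averages in a tree are not controlled by the top average. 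So the proposal as written does not reduce to the classical embedding, and no replacement step is supplied.

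For contrast, the paper avoids the change-of-weight altogether. It works with the adjoint operator $H^*: \ell^{p'}(\beta)\to L^{p'}$ and expands $\|H^*x\|_{L^{p'}}^{p'}$ via the $p'$-fold sum expansion lemma (\cref{lem:expanding_sum}); by support considerations the cubes appearing in each nonzero term form a nested chain, and the compatibility condition is then applied at the \emph{smallest} cube to absorb all the weight factors into $w_{Q_1}$ at the cost of the stated power of $A$. A H\"older step and a self-improving inequality on the intermediate quantity $I$ (using \cref{eqn:carleson_condition_2} once) then close the argument. The crucial structural feature your proposal loses is that the compatibility condition is used only at the \emph{bottom} of a nested chain, where it is available in the right direction. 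Reducing to a single top weight $w_{P_0}$ turns the problem around and forces you to need the missing upper bound.
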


\begin{remark}
    \cref{thm:carleson_embedding_general} can be seen as a variable weight generalization of the weighted Carleson Embedding Theorem given above. Indeed, if $w$ is a weight, then by setting $w_Q = w$ for all $Q$ then $\cref{eqn:weight_condition}$ holds trivially with $A = 1$ and \cref{thm:carleson_embedding_general} recovers the weighted Carleson Embedding Theorem. 
\end{remark}
Let's take a moment to interpret the compatibility condition \cref{eqn:weight_condition}. Let $\{w_Q\}$ be a sequence of weights satisfying \cref{eqn:weight_condition}. \par
First observe that the reverse inequality in \eqref{eqn:weight_condition} always holds by Hölder's inequality: \[\ang{w_{P}}_{Q}\ang{w_{Q}}_{Q}^{-1} = \frac{\int_Q w_P}{\int_Q w_Q} =  \frac{\int_Q w_P w_Q w_Q^{-1}}{\int_Q w_Q} \leq \|w_P w_Q^{-1}\|_{L^\infty(Q)}.\] 
Hence, it can be interpreted as a reverse Hölder-type condition for the family $\{w_Q\}_Q$. In particular, $A \geq 1$. \par
By rewriting \cref{eqn:weight_condition}, we see that 
\[\frac{w_P}{\ang{w_P}_Q} \leq A \frac{w_Q}{\ang{w_Q}_Q}\]
on $Q$ which intuitively says that, up to appropriate scaling, $w_P$ isn't too large on $Q$ compared to $w_Q$ when $Q \subset P$.
\par

To prove the \cref{thm:carleson_embedding_general}, we will need the following elementary lemma about expanding a sum $(\sum a_j)^p$ when $p$ is not necessarily an integer.

\begin{lemma}[\cite{hytonen2012}]\label{lem:expanding_sum}
    Let $m \in \N$, $\gamma \in [0, 1]$, and $p=m+\gamma$. Then for any sequence $\{a_i\}$ of nonnegative real numbers, we have 
    \[\left(\sum_i a_i\right)^p \leq (m + 1)\sum_{i_1, ..., i_m}a_{i_1} \cdots a_{i_m}\left(\sum_{j \leq \min\{i_1, ..., i_m\}} a_j\right)^\gamma.\]
\end{lemma}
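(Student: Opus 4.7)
The plan is to prove the integer case $\gamma = 1$ (where $p = m+1$) first via a Fubini/symmetry argument, and then deduce general $\gamma \in [0,1]$ from it by a simple pointwise comparison. Set $S := \sum_i a_i$, $\sigma_k := \sum_{j \leq k} a_j$, and write
\[
R(\gamma) := \sum_{(i_1,\ldots,i_m)} a_{i_1}\cdots a_{i_m}\,\sigma_{\min(i_1,\ldots,i_m)}^{\gamma},
\]
so the claim is $S^p \leq (m+1)\,R(\gamma)$. The degenerate cases are trivial: for $\gamma = 0$, $R(\gamma) = S^m$ and the inequality reads $S^m \leq (m+1)S^m$; for $m=0$, under the convention $\min(\emptyset) = +\infty$, $R(\gamma) = S^\gamma$ and equality holds.

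For the case $\gamma = 1$, Fubini lets me absorb the inner sum to rewrite
\[
R(1) \;=\; \sum_{\substack{(k_0,k_1,\ldots,k_m)\\ k_0 \leq k_\ell\ \forall\,\ell}} a_{k_0}a_{k_1}\cdots a_{k_m},
\]
i.e.\ a sum over $(m+1)$-tuples whose $0$-th coordinate attains the minimum. Since every ordered $(m+1)$-tuple attains its minimum in at least one coordinate, $\sum_{\ell=0}^{m}\mathbf{1}\{k_\ell=\min(k_0,\ldots,k_m)\} \geq 1$ pointwise, so multiplying by $a_{k_0}\cdots a_{k_m}$ and summing over all tuples yields
\[
S^{m+1} \;\leq\; \sum_{\ell=0}^{m} \sum_{(k_0,\ldots,k_m)}\mathbf{1}\{k_\ell=\min\}\,a_{k_0}\cdots a_{k_m} \;=\; (m+1)\,R(1),
\]
where the last equality uses coordinate symmetry: for each fixed $\ell$, swapping positions $0$ and $\ell$ identifies the inner sum with $R(1)$.

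For $\gamma \in (0,1)$, the key observation is that, on any tuple contributing nontrivially (i.e.\ with $a_{i_1}\cdots a_{i_m} > 0$), one has $0 < \sigma_{\min(i_1,\ldots,i_m)} \leq S$; since $x \mapsto x^{\gamma-1}$ is non-increasing on $(0,\infty)$, $\sigma_{\min}^{\gamma-1} \geq S^{\gamma-1}$, or equivalently $\sigma_{\min}^\gamma \geq \sigma_{\min}\cdot S^{\gamma-1}$. Weighting by $a_{i_1}\cdots a_{i_m}$ and summing gives $R(\gamma) \geq S^{\gamma-1}\,R(1)$, which combined with the $\gamma = 1$ bound yields $R(\gamma) \geq S^p/(m+1)$, as claimed.

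The main potential pitfall is a naive telescoping approach (Abel summation, or the mean-value theorem applied to $\sigma_k^p - \sigma_{k-1}^p$): this does produce an upper bound for $S^p$, but of the form $C\sum_{(i_1,\ldots,i_m)}a_{i_1}\cdots a_{i_m}\bigl(\sum_{j \leq \max(i_1,\ldots,i_m)}a_j\bigr)^\gamma$, with $\max$ in place of $\min$. Since the $\max$-version is pointwise larger than the $\min$-version, this yields a \emph{weaker} inequality; the symmetric counting in the $\gamma = 1$ step is what genuinely accesses the stated $\min$-based form.
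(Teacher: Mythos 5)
Your proof is correct. Note that the paper does not prove this lemma at all --- it is imported from \cite{hytonen2012} --- so there is no in-paper argument to compare against; what you have written is a legitimate self-contained substitute. Your route (rewrite $R(1)$ via Tonelli as the sum over $(m+1)$-tuples whose $0$-th coordinate is a minimizer, bound $S^{m+1}$ by counting that every tuple has at least one minimizing coordinate, symmetrize, then descend to $\gamma<1$ via $\sigma_{\min}^{\gamma}\geq\sigma_{\min}S^{\gamma-1}$) is clean and, importantly, delivers the constant $m+1$ directly; the iterated telescoping you flag at the end, besides producing the weaker $\max$-form, also yields the constant $p(p-1)\cdots(\gamma+1)\leq(m+1)!$ over \emph{decreasing} tuples and would need essentially your symmetrization step anyway to recover $m+1$. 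Two minor points worth a sentence each in a final write-up: (i) your restriction to tuples with $a_{i_1}\cdots a_{i_m}>0$ correctly disposes of the $0^{\gamma}$ and $0^{0}$ ambiguities, since such terms vanish on both sides; (ii) in the step $R(\gamma)\geq S^{\gamma-1}R(1)$ you implicitly assume $0<S<\infty$ --- the case $S=0$ is trivial, and the case $S=\infty$ should be handled either by truncating to finitely many indices and passing to the limit (the right-hand side only increases under enlarging the index set) or by observing directly that $R(\gamma)=\infty$ when $S=\infty$ and some $a_{i_0}>0$. Neither issue affects the validity of the argument.
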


\begin{proof}[Proof of \cref{thm:carleson_embedding_general}]
    We start by making a few reductions to make the notation more convenient to work with. Note that neither the hypothesis nor the conclusion of the theorem involve the values of $w_Q$ outside of $Q$. Therefore we assume that $w_Q$ is supported in $Q$.
    Next, note that the hypothesis and conclusion of the theorem are invariant under the scaling $w_Q \to C_Qw_Q$. Therefore we may normalize $w_Q$ by replacing $w_Q$ with $w_Q / \ang{w_Q}_{Q}$ so that $\ang{w_Q}_Q = 1$ for all $Q$. We also replace $\alpha_Q$ with $\alpha_Q\mu(Q)$. Then \cref{eqn:weight_condition} becomes 
    \begin{equation}\label{eqn:weight_condition_adj}
        \|w_P w_{Q}^{-1}\|_{L^\infty(Q)} \leq A\ang{w_{P}}_{Q} \quad \text{for all} \quad Q,P \in \mc{D}, \quad Q \subset P
    \end{equation}
    and the conclusion of the theorem becomes
    \begin{gather}
           \sum_{Q \in \mc{D}}\ang{w_Q^{1/p'}f}_Q^{p}\mu(Q)\alpha_Q \leq C_1 \|f\|_{L^{p}}^{p} \quad \text{for all} \quad  f \in L^p(\mu) \label{eqn:carleson_condition_adj_1} \\ 
            \nonumber \text{if and only if}  \\ 
            \frac{1}{\mu(P)}\sum_{Q \in \mc{D}(P)}\ang{w_P}_{Q}\mu(Q)\alpha_Q \leq C_2 \quad \text{for all} \quad P \in \mc{D} \label{eqn:carleson_condition_adj_2}
    \end{gather}
    and $C_2A^{-(p-1)} \leq C_1 \lesssim_p A^{1 + 1/p'} C_2$. \par
    We first prove the easier implication \cref{eqn:carleson_condition_adj_1} $\Rightarrow$ \cref{eqn:carleson_condition_adj_2} and $C_2 \leq A^{p - 1} C_1$. Indeed, assume \cref{eqn:carleson_condition_adj_1} and let $P \in \mc{D}$. For $Q \in \mc{D}(P)$, we have that, on $Q$,
    \[w_P = w_P^{1/p}w_P^{1/p'} \leq A^{1/p'}w_{P}^{1/p}w_Q^{1/p'}\ang{w_P}_{Q}^{1/p'}\]
    by \cref{eqn:weight_condition_adj}. Averaging over $Q$ gives 
    \[\ang{w_P}_Q^{1/p} \leq A^{1/p'}\ang{w_{P}^{1/p}w_Q^{1/p'}}_Q.\]
    Therefore by setting $f = w_P^{1/p}\1_P$, we have 
    \begin{align*}
        \sum_{Q \in \mc{D}(P)}\ang{w_P}_{Q}\alpha_Q\mu(Q) & \leq A^{\frac{p}{p'}}\sum_{Q \in \mc{D}}\ang{w_{Q}^{1/p'}f}^{p}_Q \alpha_Q \mu(Q) \\
        & \leq A^{\frac{p}{p'}}C_1\|f\|_{L^{p}}^{p} \\ 
        & = A^{\frac{p}{p'}}C_1\int_{P}w_P \, d\mu  \\
        & = A^{\frac{p}{p'}}C_1\mu(P)
    \end{align*}
    where the last equality holds by the normalization of $w_Q$. Therefore \cref{eqn:carleson_condition_adj_1} $\Rightarrow$ \cref{eqn:carleson_condition_adj_2} and $C_2 \leq A^{\frac{p}{p'}}C_1 = A^{p-1}C_1$. \par 
    We now prove the harder implication \cref{eqn:carleson_condition_adj_2} $\Rightarrow$ \cref{eqn:carleson_condition_adj_1} and $C_1 \lesssim_p A^{1 + 1/p'} C_2$. Consider the operator $H$ defined on $L^p$ given by 
    \[Hf = \left\{ \ang{w_Q^{1/p'} f}_Q \right\}_{Q \in \mc{D}}.\]
    Set 
    \[\beta = \{\beta_Q\}_{Q \in \mc{D}}, \quad \beta_Q = \alpha_Q \mu(Q).\]
    It's easy to see that \cref{eqn:carleson_condition_adj_1} and $C_1 \lesssim_p A C_2$ is equivalent to 
    \begin{equation}\label{eqn:operator_bound}
        \|Hf\|_{\ell^p(\beta)}^p \lesssim_p A^{1 + 1/p'}C_2\|f\|_{L^p}^p.
    \end{equation}
    Here we are taking 
    \[\|x\|_{l^{p}(\beta)}^{p} := \sum_{Q \in \mc{D}}|x_{Q}|^{p}\beta_Q.\]
    We now aim to show \cref{eqn:operator_bound}.
    It will be more convenient to bound the adjoint operator $H^*: l^{p'}(\beta) \to L^{p'}$ which is given by 
    \[H^*x = \sum_{Q \in \mc{D}}x_Q w_Q^{1/p'}\alpha_Q.\] 
    Without loss of generality, assume $x_Q \geq 0$ for all $Q$. Write $p' = m + \gamma$ where $m = \fl{p'} \in \N$ and $\gamma = p' - \fl{p'} \leq 1$. By \cref{lem:expanding_sum}, we have that 
    \begin{align}
        \|H^*x\|_{L^{p'}}^{p'} & = \int\left(\sum_{Q \in \mc{D}}x_Qw_Q^{1/p'}\alpha_Q\right)^{p'} \, d\mu \nonumber \\
        & \lesssim_p \int \sum_{\substack{Q_1, ..., Q_{m} \in \mc{D}}} \left(\prod_{j = 1}^{m}x_{Q_j}w_{Q_j}^{1/p'}\alpha_{Q_j}\right)\left(\sum_{\substack{Q_{m+1} \in \mc{D}: \\ Q_{m+1} \supset Q_{i} \text{ for all }i}}x_{Q_{m+1}}w_{Q_{m+1}}^{1/p'}\alpha_{Q_{m+1}}\right)^\gamma \, d\mu \nonumber \\
        & = \sum_{\substack{Q_1, ..., Q_{m} \in \mc{D}}} \left(\prod_{j = 1}^{m}x_{Q_j}\alpha_{Q_j}\right)\int_{Q_1}w_{Q_1}^{1/p'}\cdots w_{Q_m}^{1/p'}\left(\sum_{\substack{Q_{m+1} \in \mc{D}: \\ Q_{m+1} \supset Q_{m}}}x_{Q_{m+1}}w_{Q_{m+1}}^{1/p'}\alpha_{Q_{m+1}}\right)^\gamma  \, d\mu \label{eqn:big_sum_1}.
    \end{align}
    Fix some cubes $Q_1, ..., Q_{m+1} \in \mc{D}$. For the summand in \cref{eqn:big_sum_1} given by cubes $Q_1, ..., Q_{m+1}$ to be nonzero, it must be the case that $Q_i$ intersects every other $Q_j$ by support considerations. Therefore for some permutation $i_1, ..., i_m$ of $1, ..., m$, we must have 
    \[Q_{i_1} \subset Q_{i_2} \subset \cdots Q_{i_m} \subset Q_{m+1}.\]
    Using this fact along with the observation that each summand in \cref{eqn:big_sum_1} is unchanged by reordering the $Q_i$ for $i \leq m$, we have that
    \[\|H^*x\|_{L^{p'}}^{p'} \lesssim_p \sum_{\substack{Q_1, ..., Q_{m} \in \mc{D}: \\ Q_1 \subset \cdots \subset Q_{m}}} \left(\prod_{j = 1}^{m}x_{Q_j}\alpha_{Q_j}\right)\int_{Q_1}w_{Q_1}^{1/p'}\cdots w_{Q_m}^{1/p'}\left(\sum_{\substack{Q_{m+1} \in \mc{D}: \\ Q_{m+1} \supset Q_{m}}}x_{Q_{m+1}}w_{Q_{m+1}}^{1/p'}\alpha_{Q_{m+1}}\right)^\gamma  \, d\mu.\]
    To bound the integral, we apply \cref{eqn:weight_condition_adj} giving
    \[w_{Q_j} \leq Aw_{Q_1}\ang{w_{Q_j}}_{Q_1} \quad \text{on} \quad Q_1\]
    for each $2 \leq j \leq m + 1$. Therefore
    \begin{align*}
        & \int_{Q_1}w_{Q_1}^{1/p'}\cdots w_{Q_m}^{1/p'}\left(\sum_{\substack{Q_{m+1} \in \mc{D}: \\ Q_{m+1} \supset Q_{m}}}x_{Q_{m+1}}w_{Q_{m+1}}^{1/p'}\alpha_{Q_{m+1}}\right)^\gamma  \, d\mu \nonumber \\
        & \leq \int_{Q_1}w_{Q_1}^{1/p'}\left(\prod_{j=2}^{m}(Aw_{Q_1}\ang{w_{Q_j}}_{Q_1})^{1/p'}\right)\left(\sum_{\substack{Q_{m+1} \in \mc{D}: \\ Q_{m+1} \supset Q_{m}}}x_{Q_{m+1}}(Aw_{Q_1}\ang{w_{Q_{m+1}}}_{Q_1})^{1/p'}\alpha_{Q_{m+1}}\right)^\gamma  \, d\mu \nonumber \\ 
        & = A^{(m - 1 + \gamma) / p'}\left(\prod_{j = 2}^{m}\ang{w_{Q_j}}_{Q_1}^{1/p'}\right)\left(\sum_{\substack{Q_{m+1} \in \mc{D}: \\ Q_{m+1} \supset Q_{m}}}x_{Q_{m+1}}\ang{w_{Q_{m+1}}}_{Q_1}^{1/p'}\alpha_{Q_{m+1}}\right)^\gamma \int_{Q_1}w_{Q_1}^{(m + \gamma) / p'} \, d\mu \nonumber
    \end{align*} 
    \begin{equation}\label{eqn:big_sum_2}
        = A^{1/p}\left(\prod_{j = 2}^{m}\ang{w_{Q_j}}_{Q_1}^{1/p'}\right)\left(\sum_{\substack{Q_{m+1} \in \mc{D}: \\ Q_{m+1} \supset Q_{m}}}x_{Q_{m+1}}\ang{w_{Q_{m+1}}}_{Q_1}^{1/p'}\alpha_{Q_{m+1}}\right)^\gamma \mu(Q_1)
    \end{equation}
    where the last equality holds from the fact that $p' = m + \gamma$ and the normalization of the weights. Let
    \[k_{Q, P} := \begin{cases}\ang{w_{P}}_Q^{1/p'}, & Q \subset P \\ 0, & Q \not \subset P\end{cases}.\]
    By \cref{eqn:weight_condition_adj}, we have 
    \begin{equation}\label{eqn:kernel_prop}
        k_{R, P} \leq A^{1/p'} \,  k_{R, Q}k_{Q, P}, \quad \text{for} \quad R \subset Q \subset P.
    \end{equation}
    \cref{eqn:big_sum_1} and \cref{eqn:big_sum_2} imply that
    \[\|H^*x\|_{p'}^{p'} \lesssim_p A^{1/p}I\]
    where
    \begin{align*}
        I & = \sum_{\substack{Q_1, ..., Q_{m} \in \mc{D}}} x_{Q_1} \left(\prod_{j = 2}^{m}x_{Q_j}k_{Q_1,Q_j}\alpha_{Q_j}\right)\left(\sum_{\substack{Q_{m+1} \in \mc{D}}}x_{Q_{m+1}}k_{Q_1, Q_{m+1}}\alpha_{Q_{m+1}}\right)^\gamma \beta_{Q_1} \\
        & = \sum_{Q_1 \in \mc{D}}x_{Q_1}\left(\sum_{Q_2 \in \mc{D}}x_{Q_2}k_{Q_1,Q_2}\alpha_{Q_{2}}\right)^{p'-1}\beta_{Q_1}.
    \end{align*}
    Then by Hölder's inequality,
    \begin{equation}\label{eqn:holders}
        I \leq \|x\|_{l^{p'}(\beta)}\|z^{p'-1}\|_{l^{p}(\beta)} =  \|x\|_{l^{p'}(\beta)}\|z\|_{l^{p'}(\beta)}^{p'-1}
    \end{equation}
    where 
    \[z_{Q_1} := \sum_{Q_2 \in \mc{D}}x_{Q_2}k_{Q_1,Q_2}\alpha_{Q_2}.\]
    Using \cref{lem:expanding_sum} again and \cref{eqn:kernel_prop}, we estimate 
    \begin{align}
        \|z\|_{l^{p'}(\beta)}^{p'} & \lesssim_p \sum_{Q_1, ..., Q_{m+1} \in \mc{D}}\left(\prod_{j = 2}^{m+1}x_{Q_j}k_{Q_1,Q_j}\alpha_{Q_j}\right)\left(\sum_{\substack{Q_{m+2} \in \mc{D}: \\ Q_{m+2} \supset Q_i \text{ for all } i}}x_{Q_{m+2}}k_{Q_1, Q_{m+2}} \alpha_{Q_{m+2}}\right)^\gamma \beta_{Q_1} \nonumber \\
        & \lesssim_p \sum_{\substack{Q_1, ..., Q_{m+1} \in \mc{D}: \\ Q_1 \subset \cdots \subset Q_{m+1}}}\left(\prod_{j = 2}^{m+1}x_{Q_j}k_{Q_1,Q_j}\alpha_{Q_j}\right)\left(\sum_{\substack{Q_{m+2} \in \mc{D}: \\ Q_{m+2} \supset Q_2}}x_{Q_{m+2}}k_{Q_1, Q_{m+2}} \alpha_{Q_{m+2}}\right)^\gamma \beta_{Q_1} \nonumber \\
        & \leq A \sum_{Q_1, ..., Q_{m+1} \in \mc{D}}k_{Q_1, Q_2}^{p'} \left(\prod_{j = 2}^{m+1}x_{Q_j}k_{Q_2,Q_j}\alpha_{Q_j}\right)\left(\sum_{\substack{Q_{m+2} \in \mc{D}: \\ Q_{m+2} \supset Q_2}}x_{Q_{m+2}}k_{Q_2, Q_{m+2}} \alpha_{Q_{m+1}}\right)^\gamma \beta_{Q_1} \label{eqn:big_sum_3}
    \end{align}
    where the second inequality is justified in a similar way as above: for a summand in the sum in the first line to be nonzero, we must have $Q_1 \subset Q_j$ for all $j$ by definition of $k$. Then by reordering the cubes, we can assume that $Q_1 \subset \cdots \subset Q_{m+1}$. By \cref{eqn:carleson_condition_adj_2}, we have that 
    \begin{equation*}
        \sum_{Q_1 \in \mc{D}}k_{Q_1, Q_2}^{p'} \beta_{Q_1} = \sum_{Q_1 \in \mc{D}(Q_2)}\ang{w_{Q_2}}_{Q_1} \mu(Q_1)\alpha_{Q_1} \leq \mu(Q_2) C_2.
    \end{equation*}
    Substituting this estimate into \cref{eqn:big_sum_3} gives
    \begin{align*}
        \|z\|_{l^{p'}(\beta)}^{p'} & \lesssim_{p} AC_2 \sum_{Q_2, ..., Q_{m+1} \in \mc{D}} \left(\prod_{j = 2}^{m+1}x_{Q_j}k_{Q_2,Q_j}\alpha_{Q_j}\right)\left(\sum_{\substack{Q_{m+2} \in \mc{D}: \\ Q_{m+2} \supset Q_2}}x_{Q_{m+2}}k_{Q_2, Q_{m+2}} \alpha_{Q_{m+1}}\right)^\gamma \mu(Q_2) \\
        & = AC_2\sum_{Q_2 \in \mc{D}}x_{Q_2}\left(\sum_{Q_3 \in \mc{D}}x_{Q_3}k_{Q_2,Q_3}\alpha_{Q_{3}}\right)^{p'-1}\beta_{Q_2} \\
        & = AC_2I.
    \end{align*}

Therefore by \cref{eqn:holders},
    \[I \leq \|x\|_{l^{p'}(\beta)} \|z\|_{l^{p'}(\beta)}^{p' - 1} \lesssim_{p} \|x\|_{l^{p'}(\beta)} (AC_2I)^{(p' - 1) / p'} = \|x\|_{l^{p'}(\beta)} (AC_2I)^{1/p}.\]
    We conclude that 
    \[\|H^*x\|_{L^{p'}} \lesssim_{p} A^{1/(pp')} I^{1/p'}\lesssim_{p} A^{1/(pp') + 1/p}C_2^{1/p} \|x\|_{l^{p'}(\beta)}.\]
    By duality, see that 
    \[\|Hf\|_{\ell^p(\beta)}^p \lesssim_{p} A^{1 + 1/p'}C_2\|f\|_{L^p}^p\]
    which proves \cref{eqn:operator_bound} and concludes the proof of the theorem.
\end{proof}

We are now ready to give the proof of \cref{lem:square_function_estimate}.

\begin{proof}[Proof of \cref{lem:square_function_estimate}]
    We need to show that 
    \begin{equation}\label{eqn:square_function_bound}
        \sum_{Q \in \mc{S}}\ang{\bv{\mc{W}_Q^{-1}W^{1/p}}\,|g|}_Q^{p'}\mu(Q) \lesssim_{d,\eta} [W]_{A_{p,\infty}^{sc}}\|g\|_{L^{p'}}^{p'}
    \end{equation}
    where $\mc{S}$ is an $\eta$-sparse family and $W$ satisfies the $A_{p,\infty}^{sc}$ condition. \par
    For each $Q \in \mc{D}$, define 
    \[w_Q := \bv{\mc{W}_{Q}^{-1}W^{1/p}}^p\1_Q, \quad \alpha_Q := \begin{cases}\mu(Q), & Q \in \mc{S} \\ 0, & Q \not \in \mc{S}\end{cases}.\]
    We will apply \cref{thm:carleson_embedding_general} with this choice of weights and coefficients. Observe that
    \[\ang{w_Q}_{Q} \sim_d 1\]
    by definition of the reducing operator. We verify \cref{eqn:weight_condition}. Indeed, for $P, Q \in \mc{D}$ with $Q \subset P$, we have that on $Q$,
    \begin{align*}
        w_P & = \bv{\mc{W}_{P}^{-1}W^{1/p}}^p \\
        & \leq \bv{\mc{W}_{P}^{-1}\mc{W}_Q}^p\bv{\mc{W}_{Q}^{-1}W^{1/p}}^p \\
        & \sim_d \left(\dashint_{Q}\bv{\mc{W}_{P}^{-1}W^{1/p}}^p \, d\mu\right)\bv{\mc{W}_{Q}^{-1}W^{1/p}}^p \\
        & = \ang{w_P}_{Q}w_Q.
    \end{align*}
    Therefore \cref{eqn:weight_condition} is verified with $A \lesssim_d 1$. We now check \cref{eqn:carleson_condition_2}. Note that $w_P \in A_{\infty}$ with 
    \[[w_P]_{A_\infty} \lesssim_{d,p} [W]_{A_{p, \infty}^{sc}}.\]
    Indeed, if $\{e_1, ..., e_{d}\}$ is an orthonormal basis of $\R^d$, then
    \[w_P \sim_{d,p} \sum_{j}\bv{W^{1/p}\mc{W}_{P}^{-1}e_j}^p \1_{P}.\]
    Each of the weights $w_P^j = \bv{W^{1/p}\mc{W}_{P}^{-1}e_j}^p \1_{P}$ are scalar $A_\infty$ weights with $[w_P^j]_{A_\infty} \leq [W]_{A_{p, \infty}^{sc}}$. Using the fact that 
    \[[v_1 + \cdots + v_m]_{A_\infty} \leq \max_j \, [v_j]_{A_\infty}\]
    for $A_\infty$ weights $\{v_j\}$ (see \cite{NPTV}), we have that
    \[[w_P]_{A_\infty} \lesssim_d \max_j \, [w_P^j]_{A_\infty} \leq [W]_{A_{p, \infty}^{sc}}\]
    Now, since $\mc{S}$ is $\eta$-sparse, for each cube $Q$, there is some subset $E_{Q} \subset Q$ so that $\mu(E_{Q}) \geq \eta\mu(Q)$ and the collection $\{E_{Q}\}_{Q \in \mc{D}}$ is pairwise disjoint. It follows that

    \begin{align*}
        \frac{1}{\mu(P)}\sum_{Q \in \mc{D}(P)}\ang{w_P}_{Q}\ang{w_Q}_Q^{p'-1}\alpha_Q & \lesssim_{d,p} \frac{1}{\mu(P)}\sum_{Q \in \mc{S}(P)}\ang{w_P}_{Q}\mu(Q) \\
        & \lesssim_\eta \frac{1}{\mu(P)}\sum_{Q \in \mc{S}(P)}\ang{w_P}_{Q}\mu(E_Q) \\
        & \lesssim \frac{1}{\mu(P)}\sum_{Q \in \mc{S}(P)}\int_{E_{Q}}M_{P}^\mc{D}(w_P) \, d\mu \\
        & \leq \frac{1}{\mu(P)} \int_{P}M_{P}^{\mc{D}}(w_P) \, d\mu \\
        & \leq [w_P]_{A_\infty} \ang{w_P}_P \\
        & \lesssim_{d,p} [W]_{A_{p, \infty}^{sc}},
    \end{align*}
    where $\mc{S}(P)= \mc{D}(P) \cap \mc{S}.$ Therefore 
    \[\frac{1}{\mu(P)}\sum_{Q \in \mc{D}(P)}\ang{w_P}_{Q}\ang{w_Q}_Q^{p'-1}\alpha_Q  \lesssim_{d, p, \eta} [W]_{A_{p, \infty}^{sc}}\]
    and so \cref{eqn:square_function_bound} follows from \cref{thm:carleson_embedding_general}.
\end{proof}

We are now ready to prove \cref{MatrixWeightedThm}.

\begin{proof}[Proof of \cref{MatrixWeightedThm}]
    We want to show that 
    \[\mc{A}_{\mc{S}}(V^{1/p'}f, W^{1/p}g) \lesssim [W]_{A_{p, \infty}^{sc}}^{1/p'} [W, V]_{A_p}^{1/p}[V]_{A_{p', \infty}^{sc}}^{1/p}\]
    where $\|f\|_{L^p(\R^n; \R^d)} = \|g\|_{L^{p'}(\R^n; \R^d)} = 1$. By the computations at the start of this section and \cref{lem:square_function_estimate}, we have 
\[\mc{A}_{\mc{S}}(V^{1/p'}f, W^{1/p}g) \lesssim_d [W, V]_{A_p}^{1/p} \|S^V_{p} f\|_{L^{p}} \|S^W_{p'}g\|_{L^{p'}} \lesssim_{d,p} [W]_{A_{p, \infty}^{sc}}^{1/p'} [W, V]_{A_p}^{1/p}[V]_{A_{p', \infty}^{sc}}^{1/p}\]
as wanted. In the one weight case, we set $V = W^{-p'/p}$. Then 
\[[W, V]_{A_p} = [W]_{A_p}\]
so by \cref{prop:weight_duality},
\begin{align*}
    \mc{A}_S(W^{-1/p}f, W^{1/p}g) & \lesssim_d [W]_{A_{p, \infty}^{sc}}^{1/p'} [W]_{A_p}^{1/p}[W^{-p'/p}]_{A_{p', \infty}^{sc}}^{1/p} \\
    & \leq [W]_{A_p}^{1/p'} [W]_{A_p}^{1/p}[W^{-p'/p}]_{A_{p'}}^{1/p} \\
    & \lesssim [W]_{A_p}^{1/p'} [W]_{A_p}^{1/p}[W]_{A_p}^{p'/p^2} \\
    & = [W]_{A_p}^{1 + \frac{1}{p-1} - \frac{1}{p}}.
\end{align*}
Now suppose $T$ is an operator that admits vector dual sparse domination
\[|\ang{Tf, g}| \lesssim \mc{A}_{\mc{S}}(f, g)\]
where $\mc{S}$ is a sparse family with sparse parameter independent of $f$ and $g$. By a simple substitution, 
\[\|T\|_{L^p(W) \to L^p(W)} = \|M_{W^{1/p}}TM_{W^{-1/p}}\|_{L^p \to L^p}.\]
We therefore estimate  
\begin{align*}
    \|M_{W^{1/p}}TM_{W^{-1/p}}f\|_{L^p} & = \|W^{1/p}T(W^{-1/p}f)\|_{L^p}  \\
    & = \sup_{\|g\|_{L^{p'}} = 1}|\ang{W^{1/p}TW^{-1/p}f, g}| \\
    & = \sup_{\|g\|_{L^{p'}} = 1}|\ang{TW^{-1/p}f, W^{1/p}g}| \\
    & \lesssim \sup_{\|g\|_{L^{p'}} = 1}\mc{A}_{\mc{S}}(W^{-1/p}f, W^{1/p}g) \\
    & \lesssim_d [W]_{A_p}^{1 + \frac{1}{p-1} - \frac{1}{p}}\|f\|_{L^p}
\end{align*}
as wanted.
\end{proof}

\begin{proof}[Proof of \cref{cor:martingale_multiplier}] 
The result holds for $L^1$ normalized shifts combining \cref{T:thm A 2nd part} and \cref{MatrixWeightedThm}. For martingale multipliers, by \cref{lem:pointwise_to_form} and \cref{MatrixWeightedThm}, it suffices to show that for each compactly supported $f \in L^1(\R^n;\R^d)$, there is some $(1/2)-$sparse family $\mc{S} \subset \mc{D}$ so that 
\[T_\sigma f(x) \in C\sum_{Q \in \mc{S}}\dang{f}_Q\1_Q(x)\]
almost everywhere on $Q_0$. The proof of the convex body sparse domination is similar to the proof of the corresponding scalar result (see \cite{Lacey_2017}) so we omit some of the details. Suppose $f$ is supported on a dyadic cube $Q_0$. Let $\mc{B}(Q_0)$ be the maximal cubes $Q \in \mc{D}$ so that  
\[\sum_{Q' \in \mc{D}: Q' \supsetneq Q}\sigma_{Q'} \Delta_{Q'}f(x) \not \in C \dang{f}_{Q_0} \quad \text{or} \quad \ang{f}_Q \not \in C \dang{f}_{Q_0}.\]
Using the weak $(1,1)$ inequality for martingale multipliers and the maximal function, one can see that if $C$ is large enough, then 
\[\sum_{Q \in \mc{D}(Q_0)}\mu(Q) \leq \frac{1}{2}\mu(Q_0).\]
We decompose 
\begin{align*}
    T_\sigma f(x) & = T_\sigma f(x) \1_{Q_0 \setminus \bigcup_{Q \in \mc{B}(Q_0)}}(x)  + \sum_{Q \in \mc{B}(Q_0)}T_\sigma f(x) \1_Q(x)\\
    & = T_\sigma f(x) \1_{Q_0 \setminus \bigcup_{Q \in \mc{B}(Q_0)}}(x) \\ 
    & + \sum_{Q \in \mc{B}(Q_0)}\left(\sum_{Q' \in \mc{D}: Q' \supsetneq \widehat{Q}}\sigma_{Q'} \Delta_{Q'}f(x)\right) \1_Q(x) \\ & + \sum_{Q \in \mc{B}(Q_0)}\sigma_{\widehat{Q}}\Delta_{\widehat{Q}}f(x) \1_Q(x) \\ & + \sum_{Q \in \mc{B}(Q_0)}\left(\sum_{Q' \in \mc{D}: Q' \subset Q}\sigma_{Q'} \Delta_{Q'}f(x)\right) \1_Q(x).
\end{align*}
The first two terms are contained in $C \dang{f}_{Q}$ by the stopping condition and the third term is contained in $C\dang{f}_{Q_0} + C \sum_{Q \in \mc{D}(Q_0)}\dang{f}_{Q}\1_Q(x)$ by support considerations and since $\ang{f}_{\widehat{Q}} \in C \dang{f}_{Q_0}$. By performing this argument inductively on the sub-cubes of all $Q \in \mc{B}$ and forming the sparse family $\mc{S}$ with such sub-cubes, we conclude the proof of the corollary.
\end{proof}

\section{Modified Weight Classes for Haar Shifts} \label{ModifiedClass}
\subsection{Necessity of the modified weight condition and stabilization} Let $N \in \mathbb{N}$, $1<p<\infty$, and recall the complexity-dependent matrix weight class $A_p^N$ introduced in Section \ref{section 2}.
It is clear that, in general, $[W]_{A_p}\leq [W]_{A_p^N}$. The main goal of this subsection is to show that the $A_p^N$ condition is the right condition to study weighted estimates for Haar shifts.
\begin{theorem} \label{thm:necessity}
Let $n=1$, $W$ be a $d \times d$ matrix weight, and $1<p< \infty$. Suppose that all Haar shifts of complexity at most $N \in \mathbb{N}$ are bounded uniformly on $L^p(W)$; that is, 
$$\max_{\substack{ s,t \in \mathbb N: \\ s+t \leq N}} \sup_{T \in HS(s,t, \mathcal{H}_{\D})} \|T\|_{L^p(W)} \leq C.  $$
Then $W \in A_p^N.$
    
\end{theorem}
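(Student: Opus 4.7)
The plan is to test the hypothesis on an explicit one-term Haar shift of complexity at most $N$ paired with adapted test functions supported on $Q$ and on $R$, and then convert the resulting scalar inequality into the $A_p^N$ bound via reducing operators. By duality, the assumption $\|T\|_{L^p(W)\to L^p(W)}\le C$ is equivalent to
\[|\langle Tf,g\rangle|\le C\,\|f\|_{L^p(W)}\|g\|_{L^{p'}(V)},\qquad V=W^{-p'/p},\]
for all admissible $f,g$. Fix $Q,R\in\mathcal D$ with $\mathrm{dist}(Q,R)\le N+2$ and let $P$ be their smallest common dyadic ancestor, with $Q\in\mathcal D_k(P)$ and $R\in\mathcal D_\ell(P)$. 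The diagonal case $Q=R$ reduces to the classical $A_p$ bound: applying the hypothesis to complexity-$(0,0)$ shifts (martingale multipliers) one recovers the expectation operators $\mathbb{E}_Q$ and concludes by \cref{ExpOp}.

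For the generic off-diagonal configuration $k,\ell\ge 1$, consider the elementary Haar shift supported on $P$,
\[Tf=\langle f,h_{\widehat R}\rangle\,h_{\widehat Q},\]
which has complexity $(\ell-1,k-1)$, so $s+t=k+\ell-2\le N$. Because $R$ is a child of $\widehat R$, the Haar function $h_{\widehat R}$ is equal to a constant $\alpha_R$ on $R$ with $|\alpha_R|\sim\sqrt{m(R)}/\mu(R)$ by \cref{Prop: propertiesofm} and the balanced assumption; symmetrically, $h_{\widehat Q}|_Q=\beta_Q$ with $|\beta_Q|\sim\sqrt{m(Q)}/\mu(Q)$. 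Testing with vector-valued $f=\varphi\mathbf 1_R$, $g=\psi\mathbf 1_Q$ and exploiting constancy on $R$ and $Q$,
\[\langle Tf,g\rangle=\alpha_R\beta_Q\,\mu(R)\mu(Q)\,\langle\varphi\rangle_R\cdot\langle\psi\rangle_Q.\]
Maximising over $\varphi,\psi$ with $\|f\|_{L^p(W)},\|g\|_{L^{p'}(V)}\le 1$ and using the reducing operators $\mathcal W_Q$ and $\mathcal V_R$ of \cref{reducing operators} to compute the constrained sup of $|\langle\varphi\rangle_R\cdot\langle\psi\rangle_Q|$, one obtains
\[|\alpha_R\beta_Q|\,\mu(R)\mu(Q)\,\mu(Q)^{-1/p'}\mu(R)^{-1/p}\,\bm|\mathcal W_Q\mathcal V_R\bm|\lesssim_d C.\]

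Substituting the orders of $|\alpha_R|,|\beta_Q|$ and raising to the $p$-th power yields
\[\bm|\mathcal W_Q\mathcal V_R\bm|^p\lesssim_d C^p\,\frac{\mu(R)\mu(Q)^{p-1}}{m(Q)^{p/2}m(R)^{p/2}}=\frac{C^p}{c_p^b(Q,R)}.\]
The reducing-operator identity (direct from \cref{reducing operators})
\[\bm|\mathcal W_Q\mathcal V_R\bm|^p\sim_d \dashint_Q\!\left(\dashint_R\bm|W^{1/p}(x)W^{-1/p}(y)\bm|^{p'}d\mu(y)\right)^{p/p'}\!d\mu(x)\]
together with multiplication by $c_p^b(Q,R)$ produces precisely the quantity in $[W]_{A_p^N}$ with a uniform bound $\lesssim_d C^p$, so taking the sup gives $W\in A_p^N$.

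The main obstacle will be the nested cases $Q\subsetneq R$ or $R\subsetneq Q$, i.e.\ when $k$ or $\ell$ equals zero and the naive construction is either undefined or overshoots the complexity budget. For $\mathrm{dist}(Q,R)\le N+1$ one simply replaces $h_{\widehat R}$ (or $h_{\widehat Q}$) by $h_P$ and rerun the above calculation; in the extremal subcase $\mathrm{dist}(Q,R)=N+2$, one works with a grandparent Haar function $h_{Q^{(2)}}$ (or $h_{R^{(2)}}$) and uses the balanced condition $m(Q^{(2)})\sim m(Q)$ to absorb the extra ancestor factor back into $c_p^b$ without losing the scaling. The remaining technical point is the careful verification that the constrained sup of the bilinear form $(\varphi,\psi)\mapsto\langle\varphi\rangle_R\cdot\langle\psi\rangle_Q$ returns the full operator norm $\bm|\mathcal W_Q\mathcal V_R\bm|$ and not a smaller quantity such as $\bm|\langle W^{1/p}\rangle_Q\langle W^{-1/p}\rangle_R\bm|$; this requires ordering the two successive dualities correctly in the reducing-operator framework.
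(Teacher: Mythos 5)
Your treatment of the generic off-diagonal configuration (both $Q,R$ strictly inside the common ancestor $P$) matches the paper's: you test the single-term Haar shift $Tf=\langle f,h_{\widehat R}\rangle h_{\widehat Q}$, which has complexity $(\ell-1,k-1)$ with $s+t\leq N$, against functions supported on $R$ and $Q$, and extract $c_p^b(Q,R)\bv{\mathcal W_Q\mathcal V_R}^p\lesssim C^p$ via the reducing operators. The diagonal case via martingale multipliers and \cref{ExpOp} is also exactly the paper's Proposition~\ref{MartingaletoAp}. Your optimization over admissible $(\varphi,\psi)$ is, in principle, sharper than the paper's argument (which fixes explicit $f_1,f_2$ and pays two extra factors of $C$ by invoking the diagonal $A_p$ bound to control $\|f_1\|_{L^p(W)}$ and $\|f_2\|_{L^{p'}(W)}$, yielding $C^{3p}$ instead of $C^p$); but you would have to flesh out the point you flag at the end — that the image of the unit ball of $L^p(W,R)$ under $\varphi\mapsto\langle\varphi\rangle_R$ is, up to a constant depending on $d$, the ellipsoid $\mu(R)^{-1/p}\mathcal V_R B$ — which follows from comparability of support functions but is precisely where the ``ordering of dualities'' matters.

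The genuine gap is the nested case $Q\subsetneq R$ (or $R\subsetneq Q$). Replacing $h_{\widehat R}$ by $h_P=h_R$ does not work: $h_R$ has mean zero on $R$, so $\langle f,h_R\rangle$ annihilates the average $\langle\varphi\rangle_R$ you need, and the test functions that witness $\mathcal V_R$ (essentially constant on $R$) pair to zero against $h_R$. Trying instead to restrict $f$ to a child $R'$ of $R$ on which $h_R$ is constant hands you $\mathcal V_{R'}$ rather than $\mathcal V_R$, and the comparison $|\mathcal V_Re|\geq(\mu(R')/\mu(R))^{1/p'}|\mathcal V_{R'}e|$ goes the wrong way. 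The proposed ``grandparent'' repair $h_{Q^{(2)}}$ for $\mathrm{dist}(Q,R)=N+2$ is harder still to interpret, since $Q^{(2)}$ is two levels above $Q$ while $R=Q^{(N+2)}$, so pairing with $h_{Q^{(2)}}$ carries no information about averages over $R$. The paper avoids shift testing entirely here and instead uses an elementary scaling comparison: for $J\subsetneq K$ at bounded dyadic distance, balancedness gives $c_p^b(J,K)\sim m(J)^p/(\mu(K)\mu(J)^{p-1})$, and then
\[
c_p^b(J,K)\,\bv{\mathcal W_J\mathcal V_Ke}^p\sim\frac{m(J)^p}{\mu(K)\mu(J)^{p-1}}\,\dashint_J\bv{W^{1/p}\mathcal V_Ke}^p\,d\mu\leq\frac{m(J)^p}{\mu(J)^p}\,\dashint_K\bv{W^{1/p}\mathcal V_Ke}^p\,d\mu\lesssim\bv{\mathcal W_K\mathcal V_Ke}^p\lesssim C^p,
\]
using only $m(J)\leq\mu(J)$ and $J\subset K$, together with the already established diagonal bound on $K$. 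You need an argument of this kind — not another Haar shift — for the nested configuration; as written, your proposal does not cover it.
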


   \begin{remark}
The restriction $n=1$ in Theorem \ref{thm:necessity} is somewhat natural, because Haar shifts are canonical objects on the real line, while in higher dimensions there are, in general, many different choices of Haar systems. When $n \geq 2$ and the Haar system $\mathcal{H}_{\D}$ satisfies the ``non-degeneracy'' condition

$$ |\alpha_R| \gtrsim \frac{\sqrt{m(\widehat{R})}}{\mu(R)}, \quad R \in \D,$$
then Theorem \ref{thm:necessity} holds with the same proof, provided we interpret ``complexity $(0,0)$'' Haar shifts in this context as martingale multipliers as defined below rather than Haar multipliers (these notions coincide when $n=1$).

\end{remark}

Theorem \ref{thm:necessity} is a consequence of the following proposition and lemma. Proposition \ref{MartingaletoAp} is valid for all $n \in \N.$

\begin{proposition} \label{MartingaletoAp}
Let $W$ be a $d \times d$ matrix weight and $1<p<\ \infty$.  Suppose that all martingale multipliers are uniformly bounded on $L^p(W)$; that is, 
 $$  \sup_{\sigma} \|T_{\sigma}\|_{L^p(W)} \leq C,$$
 where $$ T_{\sigma}f=  \sum_{Q \in \mc{D}}\sigma_Q \Delta_Q f$$
 and $\sigma_Q=\pm 1$ for all $Q$ (the supremum is taken over all such sequences of signs). Then we must have $W \in A_p$ and $[W]_{A_p} \lesssim C^p.$

 \begin{proof}
This is a consequence of the theory of unconditional bases in Banach spaces, as well as Proposition \ref{ExpOp}. In particular, the argument is essentially the same as the one given in the scalar case with $p=2$ in \cite[Proposition 2.3]{ThieleTreilVolberg} (see also Theorem 2.2 in the same reference).

 \end{proof}
\end{proposition}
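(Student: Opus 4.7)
The plan is to reduce the problem to \cref{ExpOp}, which identifies $[W]_{A_p}$ with the uniform $L^p(W)$ operator norm of the dyadic conditional expectations $\mathbb{E}_Q$. Concretely, I will show that the hypothesis $\sup_\sigma \|T_\sigma\|_{L^p(W)\to L^p(W)} \leq C$ implies
\[
\sup_{Q\in\mathcal{D}}\,\|\mathbb{E}_Q\|_{L^p(W)\to L^p(W)} \lesssim C,
\]
so that \cref{ExpOp} yields $[W]_{A_p}\lesssim C^p$. The point is that $\mathbb{E}_Q$ is not itself a martingale multiplier, but only differs from one by multiplication by the indicator $\mathbf{1}_Q$, and the latter is a contraction on $L^p(W)$.

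The key identity is the following. For $Q\in\mathcal{D}$, define the partial projection
\[
P^{Q}f \;=\; \sum_{R\in\mathcal{D}:\,R\subseteq Q}\Delta_R f,
\]
which is supported on $Q$. Expanding the telescoping sum along any chain of dyadic ancestors of a point $x\in Q$ and applying Lebesgue differentiation (valid in the nonhomogeneous dyadic setting under our standing assumptions), one checks that
\[
P^{Q}f(x) \;=\; \mathbf{1}_Q(x)\bigl(f(x)-\langle f\rangle_Q\bigr)\qquad\text{a.e.},
\]
so that
\[
\mathbb{E}_Q f \;=\; \mathbf{1}_Q f \;-\; P^{Q}f.
\]
Next, $P^Q$ can be realized as a linear combination of martingale multipliers. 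Indeed, setting $\sigma^{1}_R\equiv +1$ and $\sigma^{2}_R = +1$ for $R\subseteq Q$, $\sigma^{2}_R=-1$ otherwise, one has
\[
P^{Q}f \;=\; \tfrac{1}{2}\bigl(T_{\sigma^{1}}f + T_{\sigma^{2}}f\bigr),
\]
and hence $\|P^{Q}\|_{L^p(W)\to L^p(W)}\leq C$ by hypothesis.

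To finish, note that multiplication by $\mathbf{1}_Q$ is a contraction on $L^p(W)$ because $W\geq 0$:
\[
\|\mathbf{1}_Q f\|_{L^p(W)}^p=\int_Q |W^{1/p}f|^p\,d\mu\leq\|f\|_{L^p(W)}^p.
\]
Combining the two bounds gives $\|\mathbb{E}_Q f\|_{L^p(W)}\leq (1+C)\|f\|_{L^p(W)}$ uniformly in $Q$, and \cref{ExpOp} closes the argument (up to absorbing the harmless factor $1$ in $C$, which may be assumed $\geq 1$).

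The main technical point—not really an obstacle but worth handling carefully—is the meaning of the infinite sum defining $T_\sigma f$ and $P^Q f$ in the nonhomogeneous setting where $W$ is not a priori an $A_p$ weight. I would justify the telescoping identity for $P^Q$ on the dense class of bounded, compactly supported functions $f$, where $\langle f\rangle_{Q^{(N)}}\to 0$ as $N\to\infty$ by the assumption that quadrants have infinite $\mu$-measure, and then extend by density using the a priori $L^p(W)$ bounds on $T_{\sigma^1},T_{\sigma^2}$ guaranteed by the hypothesis.
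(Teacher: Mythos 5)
Your argument is correct and is, in substance, exactly what the paper's terse appeal to ``theory of unconditional bases'' plus \cref{ExpOp} refers to: the observation that a partial-sum projection $P^Q=\sum_{R\subseteq Q}\Delta_R$ is the average of the two sign multipliers $T_{\sigma^1}$ (with $\sigma^1\equiv+1$, which is the identity up to the telescoping limits you justify) and $T_{\sigma^2}$ (with $\sigma^2_R=\pm1$ according to $R\subseteq Q$ or not), so that $\|P^Q\|_{L^p(W)\to L^p(W)}\leq C$, and then $\mathbb{E}_Q=M_{\mathbf{1}_Q}-P^Q$ with $M_{\mathbf{1}_Q}$ a contraction. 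Your care about the dense class of bounded compactly supported functions, the martingale-convergence telescoping, and the vanishing of $\langle f\rangle_{Q^{(N)}}$ via the infinite-measure-of-quadrants hypothesis is exactly the right bookkeeping. The only cosmetic slip is the header, which names \cref{MatrixWeightedThm} rather than \cref{MartingaletoAp}.
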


The ``new conditions'' that we must check for the $A_p^N$ condition are taken care of in the lemma below. Recall that for each pair of dyadic cubes $I,J$, there exist reducing operators ($d \times d$ matrices) $\mathcal{W}_{I}$ and $\mathcal{V}_{J}$ satisfying for each vector $e \in \R^d$:

$$ |\mathcal{W}_{I} e |^p \sim \dashint_{I} |W^{1/p}(x)e |^p \, d\mu(x)  , \quad  |\mathcal{V}_{J} e |^{p'} \sim \dashint_{J} |W^{-1/p}(x)e |^{p'} \, d\mu(x) $$
It is then possible to deduce (see Section \ref{section 2} and \cite{cruzuribe2017}) that
\begin{equation}\label{eqn:reducing_operator_modified}
    [W]_{A_p^N} \sim \sup_{\substack{I,J \in \mc{D} \\ 0 \leq \text{dist}_{\mc{D}}(I, J) \leq N + 2}}[c_p^b(I, J)]   \bv{\mathcal{W}_{I} \mathcal{V}_{J}}^p,
\end{equation}
and also
$$ [W]_{A_p} \sim \sup_{Q \in{D}} \bv {\mathcal{W}_{Q} \mathcal{V}_{Q}}^p.$$

\begin{lemma}
Let $n=1$, $W$ be a matrix weight, let $N \in \mathbb N$ and suppose

$$\max_{\substack{ s,t \in \mathbb N: \\ s+t \leq N}} \sup_{T \in HS(s,t)} \|T\|_{L^p(W)} \leq C.  $$

Let $J, K \in \D$ be such that $\text{dist}_{\D}(J,K) \leq N+2$. Then

$$ c_p(J,K) \bv{ \mathcal{W}_{J} \mathcal{V}_{K}}^p  \lesssim C^{3p}.$$

\end{lemma}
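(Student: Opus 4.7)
The plan is to probe the $L^p(W)$-operator norm of a single-term Haar shift $T'f = \ang{f, h_{\widehat K}} h_{\widehat J}$ (with Haar functions chosen so that the complexity is at most $N$) with vector-valued test functions built from $W^{\pm 1/p}$, and then upgrade the resulting estimate from Jensen-type averages $\ang{W^{\pm 1/p}}_Q$ to the reducing operators $\mathcal{W}_J, \mathcal{V}_K$ using \cref{MartingaletoAp}. When $J = K$ the claim is immediate: $c_p(J, J) = 1$ and \cref{MartingaletoAp} applied to martingale multipliers (the complexity-$(0,0)$ shifts in dimension one) gives $[W]_{A_p} \lesssim C^p$, hence $\bv{\mathcal{W}_J \mathcal{V}_J}^p \sim [W]_{A_p} \leq C^{3p}$. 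We therefore focus on $J \neq K$ in the disjoint case (the nested cases follow by analogous arguments with appropriate modifications of the shift).

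For disjoint $J, K$ with $\dd(J, K) \leq N+2$, I would pick a common ancestor $Q \in \D$ and integers $s, t \geq 0$ with $s + t \leq N$ so that $\widehat K \in \D_s(Q)$ and $\widehat J \in \D_t(Q)$. Then $T'$ above has complexity $(s, t) \leq N$, so $\|T'\|_{L^p(W)} \leq C$ by hypothesis. Testing with
\[ f = \mu(K)^{-1} W^{-1/p} u\, \1_K, \qquad g = \mu(J)^{-1} W^{1/p} v\, \1_J, \]
for vectors $u, v \in \R^d$, and using that $h_{\widehat K}$ is constant of magnitude $\sqrt{m(\widehat K)}/\mu(K)$ on $K$ (and analogously for $h_{\widehat J}$ on $J$), a direct computation gives
\[ |\ang{T'f, g}| = \frac{\sqrt{m(\widehat J)\, m(\widehat K)}}{\mu(J)\, \mu(K)}\, \bigl| u \cdot \ang{W^{-1/p}}_K \ang{W^{1/p}}_J v \bigr|, \]
while $\|f\|_{L^p(W)} = |u|\, \mu(K)^{-1/p'}$ and $\|g\|_{L^{p'}(W^{-p'/p})} = |v|\, \mu(J)^{-1/p}$. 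Applying the bound $|\ang{T'f, g}| \leq C \|f\|_{L^p(W)} \|g\|_{L^{p'}(W^{-p'/p})}$, maximizing over unit $u, v$, raising to the $p$-th power, and invoking the balanced condition $m(\widehat Q) \sim m(Q)$ yields
\[ c_p(J, K)\, \bv{\ang{W^{-1/p}}_K \ang{W^{1/p}}_J}^p \lesssim C^p. \]

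It remains to upgrade $\ang{W^{\pm 1/p}}$ to the reducing operators. Jensen's inequality gives the easy bounds $\bv{\ang{W^{1/p}}_J e} \leq \bv{\mathcal{W}_J e}$ and $\bv{\ang{W^{-1/p}}_K e} \leq \bv{\mathcal{V}_K e}$ for every $e \in \R^d$. For the reverse direction, \cref{MartingaletoAp} supplies $\bv{\mathcal{W}_Q \mathcal{V}_Q} \lesssim C$ for every $Q$, and combined with a matrix version of the Cauchy--Schwarz inequality $\ang{W^{1/p}}_Q \ang{W^{-1/p}}_Q \geq I$, this yields the matrix upgrade $\bv{\mathcal{W}_J \mathcal{V}_K} \lesssim C^2\, \bv{\ang{W^{-1/p}}_K \ang{W^{1/p}}_J}$, costing one power of $C$ per reducing operator. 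Substituting into the previous display gives the desired $c_p(J, K)\, \bv{\mathcal{W}_J \mathcal{V}_K}^p \lesssim C^{3p}$.

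The principal technical obstacle is the matrix-valued upgrade in the last step: whereas the scalar analog reduces to the elementary inequalities $\ang{W^{1/p}}_Q \ang{W^{-1/p}}_Q \geq 1$ and $\mathcal{W}_Q \mathcal{V}_Q \lesssim [W]_{A_p}^{1/p}$, the matrix case requires careful handling of the non-commutativity of positive semi-definite matrices. The identity $\bv{AB} = \bv{BA}$ for square matrices of the same size (sharing singular values) together with Jensen-type matrix inequalities supplies the path. The balanced hypothesis enters naturally at the end of the second paragraph to replace $m(\widehat Q)$ (arising from the Haar coefficients) by $m(Q)$ in $c_p(J, K)$.
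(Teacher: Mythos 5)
Your setup and test-function computation are correct through the estimate
\begin{equation*}
c_p^b(J,K)\,\bv{\ang{W^{-1/p}}_K\ang{W^{1/p}}_J}^p \lesssim C^p,
\end{equation*}
and your two-vector probe $(u,v)$ is cleaner than a single test vector because $\sup_{|u|=|v|=1}|u\cdot Mv| = \bv{M}$ exactly. But the final ``upgrade'' is where the argument breaks. You claim that the matrix Jensen inequality together with the per-cube bound $\bv{\mathcal{W}_Q\mathcal{V}_Q}\lesssim C$ yields $\bv{\mathcal{W}_J\mathcal{V}_K}\lesssim C^2\,\bv{\ang{W^{-1/p}}_K\ang{W^{1/p}}_J}$. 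What those two facts actually give you is two-sided Loewner comparability on each cube separately: writing $A_J:=\ang{W^{1/p}}_J$ and $B_K:=\ang{W^{-1/p}}_K$, one can indeed deduce $A_J\lesssim\mathcal{W}_J\lesssim C A_J$ and $B_K\lesssim\mathcal{V}_K\lesssim C B_K$ in the positive semidefinite order. In the scalar case this finishes the job, but for matrices it does \emph{not} control $\bv{\mathcal{W}_J\mathcal{V}_K}$ by $C^2\bv{A_JB_K}$ when $J\neq K$: the map $X\mapsto\bv{XM}$ is not Loewner-monotone (equivalently, $X\leq Y$ does not give $X^2\leq Y^2$, nor $XMX\leq YMY$). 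Concretely, take
\begin{equation*}
X = \begin{pmatrix}1 & 1\\ 1 & 1\end{pmatrix},\quad
Y = \begin{pmatrix}2 & 1\\ 1 & 1\end{pmatrix},\quad
M = vv^T \text{ with } v \text{ an eigenvector of } Y^2-X^2 \text{ with negative eigenvalue.}
\end{equation*}
Then $X\leq Y$ but $\bv{XM}^2 = v^TX^2v > v^TY^2v = \bv{YM}^2$. So the Loewner inequalities you invoke, plus $\bv{AB}=\bv{BA}$, are simply not enough to carry the averages $\ang{W^{\pm1/p}}$ across a non-commuting product to the reducing operators; the scalar intuition silently uses commutativity.

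The paper's proof sidesteps this entirely by choosing test functions that put the reducing operators \emph{directly into the pairing}, rather than producing $L^1$-averages that must then be inverted. They take $f_1 = \mu(K)^{-1/p}\mathbf{1}_K\,\mathcal{V}_Ke$ and, crucially, $f_2 = \mu(J)^{-1/p'}\mathbf{1}_J\, W^{-1}(x)\mathcal{W}_Je$, so that $Wf_2$ is the \emph{constant} vector $\mu(J)^{-1/p'}\mathcal{W}_Je$ on $J$ and the inner product $\ang{Tf_1, Wf_2}$ evaluates to (balanced constant)$\times\,c_p^b(J,K)^{1/p}\,|\mathcal{V}_Ke\cdot\mathcal{W}_Je|$ with \emph{no Jensen loss}. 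The price is paid in $\|f_1\|_{L^p(W)}\lesssim|\mathcal{W}_K\mathcal{V}_Ke|\lesssim C$ and $\|f_2\|_{L^{p'}(W)}\lesssim|\mathcal{V}_J\mathcal{W}_Je|\lesssim C$, which use only the same-cube bound from the $s=t=0$ case; together with the hypothesis $\|T\|_{L^p(W)}\leq C$ this gives the stated $C^{3p}$ directly. To repair your approach you would need to change the test functions to this form (or otherwise encode $\mathcal{W}_J$ and $\mathcal{V}_K$ into them); the ``reverse-H\"older-free'' upgrade you propose is exactly the kind of estimate the paper is at pains to avoid in the nonhomogeneous setting. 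Finally, a minor point: you wave off the nested case $J\subset K$ or $K\subset J$, but the paper handles it by a direct comparison of averages rather than a shift-testing argument, since in that case the rank-one shift degenerates.
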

\begin{proof}

First, taking $s=t=0$, notice that Proposition \ref{MartingaletoAp} immediately implies, for any interval $Q \in \mc{D}$, there holds $$\bv{ \mathcal{W}_{Q} \mathcal{V}_{Q} } \lesssim C.$$

Let $L$ be the minimal common dyadic ancestor of $J,K$. Let us first consider the case when $J,K$ are both strictly contained in $L$. Let $s,t \in \N $ be such that $K \in \D_{s+1}(L),$ $J \in \D_{t+1}(L)$, and notice necessarily $s+t \leq N.$ Let $e$ be a unit vector in $\R^d$ chosen so that

$$ \left | \mathcal{V}_K e \cdot  \mathcal{W}_J e \right| \geq \frac{1}{2} \bv  {\mathcal{V}_K \mathcal{W}_J}.      $$

Then choose test vector-valued functions

$$ f_1(x)= \frac{\mathbf{1}_K}{\mu(K)^{1/p}} (\mathcal{V}_K e), \quad f_2(x)= \frac{\mathbf{1}_J}{\mu(J)^{1/p'}} (W^{-1}(x) \mathcal{W}_Je).$$

We now compute the $L^p(W)$ norm of $f_1$ and the $L^{p'}(W)$ norm of $f_2$:

\begin{align*}
\|f_1\|_{L^p(W)}^p & = \frac{1}{\mu(K)} \int_{K} |W^{1/p}(x) \mathcal{V}_K e|^p \, d \mu(x) \\
& \sim | \mathcal{W}_K \mathcal{V}_K e |^p \\
& \leq \bv {\mathcal{W}_K \mathcal{V}_K} ^p \\
& \lesssim C^p
\end{align*}

and similarly, $\|f_2\|_{L^{p'}(W)} \lesssim C.$

Now, let $\{\alpha_{R,S}^{Q}\}$ be the sequence of scalars satisfying $\alpha_{\hat{K},\hat{J}}^L=1$ and $\alpha_{R,S}^{Q}=0$ otherwise, and let $T$ be the associated Haar shift operator of rank one, i.e.

$$ Tf= \langle f, h_{\hat{K}} \rangle h_{\hat{J}}.$$

With this in mind and considering our choices of $f_1,f_2$, we have by duality

\begin{align*}
C^3 & \geq \left | \int_{\R} T f_1 \cdot W f_2 \, d\mu \right | \\
& = \left| \langle f_1, h_{\hat{K}} \rangle \cdot \langle W f_2, h_{\hat{J}} \rangle      \right | \\
& = \frac{\sqrt{m(\widehat{K})m(\widehat{J}})}{\mu(K) \mu(J)} \left |\int_{K} \int_{J} f_1 (y) \cdot W(x) f_2 (x)   \,d \mu(x)\, d \mu(y) \right|\\
& \sim \frac{\sqrt{m(K)m(J)}}{\mu(K)^{1/p} \mu(J)^{1/p'}} \left | \frac{1}{\mu(J) \mu (K)} \int_{K} \int_{J} \mathcal{V}_K e \cdot \mathcal{W}_J e \, d \mu(y) d \mu(x)    \right | \\
& \sim c_p^b(J,K)^{1/p} |\mathcal{V}_K e \cdot \mathcal{W}_J e| \\
& \gtrsim c_p^b(J,K)^{1/p} \bv{\mathcal{W}_J  \mathcal{V}_K},
\end{align*}
which implies the desired estimate.

It remains to handle the case when $1 \leq \operatorname{dist}_{\D}(J,K) \leq N+2$ but either $L=J$ or $L=K.$ Without loss of generality, let us assume $L=K$ and thus $J \subsetneq K. $ Then if $e$ is any unit vector, it is enough to show
$$
c_p(J,K) |\mathcal{W}_{J} \mathcal{V}_{K} e|^p  \lesssim C^p.
$$
To see this, note that

\begin{align*}
 c_p(J,K) |\mathcal{W}_{J} \mathcal{V}_{K} e|^p  & \sim \frac{m(J)^p}{\mu(K) \mu(J)^{p-1} }\frac{1}{\mu(J)} \int_{J} |W^{1/p}(x)(\mathcal{V}_k e)| \, d \mu(x)\\
 & \leq \frac{m(J)^p}{ \mu(J)^{p} }\frac{1}{\mu(K)} \int_{K} |W^{1/p}(x)(\mathcal{V}_k e)| \, d \mu(x) \\
 & \lesssim |\mathcal{W}_{K} \mathcal{V}_{K} e|^p \lesssim C^p,
\end{align*}
as required.

\end{proof}

Although we define complexity-dependent weight characteristics $[W]_{A_p^N}$, the weight \textit{classes} are the same independently of the complexity. They are all unified under the following condition.
\begin{definition}
    Let $1<p<\infty$. We say that $W\in A_p^b$ if
    \begin{equation*}
        \sup_{\substack{Q,R\in\mathcal D\\ R\in\mathrm{ch}(\hat Q)\cup\mathrm{ch}\left(Q^{(2)}\right)\\\text{or } Q\in\textrm{ch}\left(R^{(2)}\right)}}c_p^b(Q,R)\dashint_Q\left(\dashint_R|W(x)^{1/p}W(y)^{-1/p}|^{p'}d\mu(y)\right)^{p/p'}d\mu(x)<\infty,
    \end{equation*}
    where $Q^{(1)}=\hat Q$ and $Q^{(j)}=\widehat{Q^{(j-1)}}$ for $j\geq 2$.
\end{definition}
\begin{proposition}\label{prop:stability}
    For $1<p<\infty$ and $N\in\N$, we have
    \begin{equation*}
    [W]_{A_p^b}\leq[W]_{A_p^N}\lesssim\left([W]_{A_p^b}\right)^{2^{N-1}}.
    \end{equation*}
    In particular, $A_p^N=A_p^M$ for all $N,M\in\N$.
\end{proposition}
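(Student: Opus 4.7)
The easier direction $[W]_{A_p^b}\leq [W]_{A_p^N}$ for $N\geq 1$ is immediate: every pair $(Q,R)$ entering the supremum defining $[W]_{A_p^b}$ (namely, siblings, parent-child, or uncle-niece pairs) has dyadic distance at most $3\leq N+2$, and since the weight $c_p^b(Q,R)$ is defined identically in both characteristics, the $A_p^b$ supremum is taken over a subset of the pairs used for $A_p^N$. For the upper bound $[W]_{A_p^N}\lesssim [W]_{A_p^b}^{2^{N-1}}$, the plan is to prove by induction on $N$ the stronger recursion $[W]_{A_p^N}\lesssim [W]_{A_p^{N-1}}^{2}$ for $N\geq 2$, starting from the base case $[W]_{A_p^1}\lesssim [W]_{A_p^b}$. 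Iterating $N-1$ times produces the exponent $2\cdot 2\cdots 2=2^{N-1}$, and the equivalence $A_p^N=A_p^M$ (as weight classes) follows from the finiteness of $[W]_{A_p^b}$ being equivalent to the finiteness of $[W]_{A_p^N}$.

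The central tool is the equivalent reducing-operator description \eqref{eqn:reducing_operator_modified}, combined with the auxiliary fact that
\[
\bv{(\mathcal{W}_S\mathcal{V}_S)^{-1}}\lesssim_{d}1\quad\text{for every cube }S,
\]
which follows from a Hölder duality argument: the inequality $\langle e,v\rangle\lesssim_d|\mathcal{V}_S e||\mathcal{W}_S v|$ valid for all $e,v$ (just expand the pairing via $W^{1/p}W^{-1/p}$), upon substituting $v=\mathcal{V}_S u$ and taking duality, yields $|u|\lesssim_d|\mathcal{W}_S\mathcal{V}_S u|$, i.e.\ $\sigma_{\min}(\mathcal{W}_S\mathcal{V}_S)\gtrsim_d 1$. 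With this bound in hand, the base case $N=1$ is handled as follows. Pairs at dyadic distance at most $3$ are almost all directly in $A_p^b$; the only exception is the distance-$2$ grandparent-grandchild pairs, for which one telescopes through the intermediate parent,
\[
\bv{\mathcal{W}_Q\mathcal{V}_R}\leq\bv{\mathcal{W}_Q\mathcal{V}_{\hat Q}}\cdot\bv{(\mathcal{W}_{\hat Q}\mathcal{V}_{\hat Q})^{-1}}\cdot\bv{\mathcal{W}_{\hat Q}\mathcal{V}_R},
\]
whose outer factors are parent-child pairs in $A_p^b$ and whose middle factor is absorbed by the dimensional constant.

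For the inductive step, given a pair $(Q,R)$ of dyadic distance $N+2$ with minimal common dyadic ancestor $L$, write $Q\in\D_s(L),R\in\D_t(L)$ with $s+t=N+2$. When $s,t\geq 1$, factor
\[
\bv{\mathcal{W}_Q\mathcal{V}_R}\leq\bv{\mathcal{W}_Q\mathcal{V}_{L}}\cdot\bv{(\mathcal{W}_L\mathcal{V}_L)^{-1}}\cdot\bv{\mathcal{W}_L\mathcal{V}_R}
\]
and observe that both outer pairs $(Q,L),(L,R)$ have dyadic distance at most $N+1$ and thus enter $[W]_{A_p^{N-1}}$. Plugging in the inductive hypothesis, the cross term being controlled by a constant, produces a square, after accounting for the combinatorial ratio
\[
\frac{c_p^b(Q,R)}{c_p^b(Q,L)\,c_p^b(L,R)}=\frac{\mu(L)^p}{m(L)^p}.
\]
In the degenerate cases $s=0$ or $t=0$ (one of $Q,R$ is the common ancestor), one instead factors through a parent-child step plus a distance-$(N+1)$ step, giving the product $[W]_{A_p^b}\cdot[W]_{A_p^{N-1}}\leq[W]_{A_p^b}^{1+2^{N-2}}\leq [W]_{A_p^b}^{2^{N-1}}$ for $N\geq 2$.

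The main obstacle is exactly the careful bookkeeping of the $c_p^b$ weights through the chain: the ratio $\mu(L)^p/m(L)^p$ is finite in the balanced setting (where $m(\hat Q)\sim m(Q)$ guarantees that $m$ is essentially constant along dyadic chains) but is not bounded by a universal constant, so it must be absorbed into the implicit constant depending on $\mu,\mathscr{H},p,d$, not into the $[W]_{A_p^b}$ exponent. Tracking this dependence carefully through the iteration gives an implicit constant of geometric origin while leaving the exponent of $[W]_{A_p^b}$ as $2^{N-1}$, as claimed.
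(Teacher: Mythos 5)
Your telescoping step has a genuine gap: you factor through the minimal common ancestor $L$ and acknowledge the combinatorial ratio
\[
\frac{c_p^b(Q,R)}{c_p^b(Q,L)\,c_p^b(L,R)}=\left(\frac{\mu(L)}{m(L)}\right)^{p},
\]
but you then claim it ``must be absorbed into the implicit constant depending on $\mu,\mathscr{H},p,d$.'' That is not correct. One always has $m(L)\leq\mu(L)$, and the ratio $\mu(L)/m(L)$ is comparable to $1$ uniformly in $L\in\mathcal D$ precisely when $\mu$ is dyadically doubling. For a balanced but non-doubling measure (which is the setting this proposition is designed for) one has $\sup_{L\in\mathcal D}\mu(L)/m(L)=\infty$, so your implicit constant is infinite. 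The balanced condition $m(\hat Q)\sim m(Q)$ says $m$ varies tamely along dyadic chains, not that $m(L)\sim\mu(L)$; those are different statements, and conflating them is where the argument breaks. The same defect appears in your treatment of the degenerate cases $s=0$ or $t=0$, since factoring through a parent/child there also produces an uncontrolled $\mu/m$ factor.

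The paper sidesteps exactly this issue by telescoping through a different intermediate cube: a child $Q^*\in\mathrm{ch}(Q^{(2)})$ chosen with $\mu(Q^*)=\min\{\mu(S):S\in\mathrm{ch}(Q^{(2)})\}$. By \cref{Prop: propertiesofm}(\ref{part3prop}), this specific cube satisfies $\mu(Q^*)\sim m(Q^*)\sim m(Q^{(2)})$, which is what makes $c_p^b(Q,R)\sim c_p^b(Q,Q^*)\,c_p^b(Q^*,R)$ hold with uniform constants (the ratio is $(\mu(Q^*)/m(Q^*))^p\sim 1$). The paper then verifies $\mathrm{dist}(Q,Q^*)\leq 3$ and $\mathrm{dist}(Q^*,R)\leq N+2$, so both factors fall under $[W]_{A_p^N}$, and applies Douglas' lemma to control the cross term; your $\bv{(\mathcal W_S\mathcal V_S)^{-1}}\lesssim_d 1$ observation is essentially the same and would also work here. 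The nested case $Q\subset R$ is also handled differently in the paper: not by telescoping but by directly enlarging the inner integration domain from $Q$ to $R$ and using $m(Q)\leq\mu(Q)$ together with $m(R)\sim m(Q)$ to control the $c_p^b$ prefactor; that is how the base case $N=1$ is proved. If you replace $L$ by a minimal-measure child of $Q^{(2)}$ (or $R^{(2)}$) as the pivot, and treat the nested pairs by domain enlargement, the rest of your induction and the resulting exponent $2^{N-1}$ go through as you describe.
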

\begin{remark}
    This was already observed in \cite{CPW} when $d=n=1$. Our definition of $A_p^b$ is stronger, as they needed not consider the case in which $R$ is a sibling of $Q$. We include this configuration in the supremum to deal with the situation in which $Q$ and $R$ are siblings but neither $\mu(Q)$ nor $\mu(R)$ are comparable to $\mu(\hat Q)$. This can only happen when $n>1$, and so for $n=1$ we can replace the $A_p^b$ condition by
    \begin{equation*}
        \sup_{\substack{Q,R\in\mathcal D\\ R\in\lbrace Q\rbrace\cup\mathrm{ch}\left(Q^{(2)}\right)\\\text{or } Q\in\textrm{ch}\left(R^{(2)}\right)}}c_p^b(Q,R)\dashint_Q\left(\dashint_R|W(x)^{1/p}W(y)^{-1/p}|^{p'}d\mu(y)\right)^{p/p'}d\mu(x)<\infty.
    \end{equation*}
\end{remark}
\begin{proof}[Proof of \cref{prop:stability}]
    It is always the case that $[W]_{A_p^b}\leq[W]_{A_p^N}$. To prove the second inequality, we proceed by induction on $N$. When $N=1$, we only need to consider the cases $Q\subset R$ and $R\subset Q$. For $Q\subset R$,
    \begin{align*}
        &\frac{m(Q)^{p/2}m(R)^{p/2}}{\mu(Q)^{p-1}\mu(R)}\dashint_Q\left(\dashint_R|W(x)^{1/p}W(y)^{-1/p}|^{p'}d\mu(y)\right)^{p/p'}d\mu(x)\\&\lesssim \frac{m(Q)^{p/2}m(R)^{p/2}}{\mu(Q)^p}\dashint_R\left(\dashint_R|W(x)^{1/p}W(y)^{-1/p}|^{p'}d\mu(y)\right)^{p/p'}d\mu(x)\\&\lesssim\dashint_R\left(\dashint_R|W(x)^{1/p}W(y)^{-1/p}|^{p'}d\mu(y)\right)^{p/p'}d\mu(x)\leq[W]_{A_p^b}.
    \end{align*}
    An analogous argument holds when $R\subset Q$. Now fix $N\in\N$ and assume $[W]_{A_j^N}\lesssim([W]_{A_p^b})^{2^{N-1}}$. We shall show $[W]_{A_j^{N+1}}\lesssim[W]_{A_j^N}^2$. We only need to consider the cases in which $\mathrm{dist}(Q,R)=N+3$. Furthermore, if $Q\cap R\neq\emptyset$, the argument for the base case works, so we assume $Q\cap R=\emptyset$. Let $K$ be the minimal common ancestor of $Q$ and $R$, so that $Q^{(s)}=R^{(t)}=K$ and $s+t=N+3$. In particular, $Q^{(2)}\subset K$ or $R^{(2)}\subset K$. Without loss of generality, suppose the former. Take $Q^*\in\mathrm{ch}(Q^{(2)})$ such that
    \begin{equation*}
        \mu(Q^*)=\min\lbrace \mu(S):S\in\mathrm{ch}(Q^{(2)})\rbrace\sim m(Q^{(2)})\sim m(Q^*).
    \end{equation*}
    From that, it holds that $c_p^b(Q,R)\sim c_p^b(Q,Q^{*})c_p^b(Q^{*},R)$.
    Now, we use the characterization of $[W]_{A_p^{N+1}}$ via reducing operators. In general, we have that $\|\mathcal V_{Q^*}\mathcal W_{Q^*}v\|\geq\|v\|$ for all $v\in\R^d$ (see \cite[Proposition 1.1]{Goldberg2003}). By Douglas' lemma, there exists a matrix $T$ such that $\mathcal W_{Q^*}^{-1}=T\mathcal V_{\hat Q^*}$, and we have that $\bv{T}\leq 1$. In particular, $\|T^{-1}v\|\geq \|v\|$ for all $v\in\R^d$. We have the estimate
\begin{align*}
    c_p^b(Q,R)\bv{\mathcal W_Q\mathcal V_R}^p&=c_p^b(Q,R)\bv{\mathcal W_Q\mathcal V_{Q^*}\mathcal V_{Q^*}^{-1}\mathcal V_R}^p\lesssim c_p^b(Q, Q^*)\bv{\mathcal W_Q\mathcal V_{Q^*}}^pc_p^b(Q^*,R)\bv{\mathcal V_{Q^*}^{-1}\mathcal V_R}^p\\&\lesssim c_p^b(Q, Q^*)\bv{\mathcal W_Q\mathcal V_{Q^*}}^pc_p^b(Q^*, R)\bv{\mathcal V_R \mathcal V_{Q^*}^{-1}T^{-1}}^p\\&\sim c_p^b(Q, Q^*)\bv{\mathcal W_Q\mathcal V_{Q^*}}^pc_p^b(Q^*,R)\bv{\mathcal W_{Q^*}\mathcal V_R}^p\leq [W]_{A_p^N}^2,
\end{align*}
as $\mathrm{dist}(Q,Q^*)\leq 3\leq N+2$ and $\mathrm{dist}(Q^*,R)\leq N+2$. This completes the induction.
\end{proof}

\subsection{Weighted estimates for Haar shifts}

\begin{proof} [Proof of Corollary \ref{cor:WeightedHaarShift}]
   By \cref{thm: sparseforvectorhaar} and \cref{lem:pointwise_to_form}, we see that for any $f \in L^p$ and $g \in L^{p'}$, we have that 
   \[|\ang{Tf, g}| \lesssim \mc{A}_{\mc{S}}(f, g) + \mc{A}_{\mc{S}}^N(f, g)\]
   where $\mc{S}$ is an $\eta$-sparse family with $\eta$ is independent of $f$ and $g$. Similarly to the proof of \cref{MatrixWeightedThm}, to conclude the proof of the corollary, we need to show that for all $f \in L^p$ and $g \in L^{p'}$, we have that
   \[\mc{A}_{\mc{S}}(V^{1/p'}f, W^{1/p}g) + \mc{A}^N_{\mc{S}}(V^{1/p'}f, W^{1/p}g) \lesssim [W]_{A_p}^{1 + \frac{1}{p-1} - \frac{2}{p}}[W]_{A_p^N}^{\frac{1}{p}} \|f\|_{L^p}\|g\|_{L^{p'}}\]
   where $V = W^{-p'/p}$. The bound of the first term follows by \cref{MatrixWeightedThm} and the fact that $[W]_{A_p} \leq [W]_{A_p^N}$ for all $N$. Therefore it remains to bound the $\mc{A}_{\mc{S}}^N$ term:
   \[\sum_{\substack{J,K \in \mc{S} \\ \text{dist}_{\mc{D}}(J, K) \leq N + 2 }}\dashint_{J}\dashint_{K}|V^{1/p'}(x)f(x) \cdot W^{1/p}(y) g(y)| \, d\mu(x)d\mu(y) \sqrt{m(J)}\sqrt{m(K)}.\]
   Indeed, by following the discussion at the start of \cref{section 4}, and by applying \cref{eqn:reducing_operator_modified} we can see that
   \[|V^{1/p'}(x)f(x) \cdot W^{1/p}(y)g(y)|\sqrt{m(J)}\sqrt{m(K)}\] 
   \[\lesssim_d [W]_{A_{p}^N}^{1/p}|\mc{V}^{-1}_K V^{1/p'}(x)f(x)| \, |\mc{W}_J^{-1}W^{1/p}(y)g(y)|\mu(J)^{1/p'}\mu(K)^{1/p}.\]
   Therefore 
   \begin{align*}
       & \mc{A}^N_{\mc{S}}(V^{1/p'}f, W^{1/p}g) \\ & \lesssim_d [W]_{A_{p}^N}^{1/p} \sum_{\substack{J,K \in \mc{S} \\ \text{dist}_{\mc{D}}(J, K) \leq N + 2 }}\dashint_{J}\dashint_{K}|\mc{V}^{-1}_K V^{1/p'}(x)f(x)| \, |\mc{W}_J^{-1}W^{1/p}(y)g(y)| \, d\mu(x)d\mu(y) \mu(J)^{1/p'}\mu(K)^{1/p} \\
       & \lesssim_{N} [W]_{A_{p}^N}^{1/p} \left(\sum_{K \in \mc{S}}\ang{|\mc{V}^{-1}_K V^{1/p'}f|}_K^p \mu(K)\right)^{1/p}\left(\sum_{J \in \mc{S}}\ang{|\mc{W}^{-1}_J V^{1/p'}f|}_J^{p'} \mu(J)\right)^{1/p'} \\
       & \lesssim_{n, p, \eta} [W]_{A_p^N}^{1/p}[V]_{A_{p'}}^{1/p}[W]_{A_{p}}^{1/p'}\|f\|_{L^p}\|g\|_{L^{p'}} \\
       & \lesssim_d [W]_{A_p}^{1 + \frac{1}{p-1} - \frac{2}{p}}[W]_{A_p^N}^{\frac{1}{p}} \|f\|_{L^p}\|g\|_{L^{p'}}
   \end{align*}
   where the second last inequality follows from \cref{lem:square_function_estimate} and the last inequality follows from \cref{prop:weight_duality}.

\end{proof}

\subsection{Two-weight bumps}
Recall a Young function $\Phi:[0,\infty) \rightarrow [0,\infty)$ is a convex, continuous increasing function which satisfies $\Phi(0)=0$. We also assume $\lim_{t \rightarrow \infty}\frac{\Phi(t)}{t}=\infty.$ Given a Young function $\Phi$, its dual or complementary Young function is denoted by $\overline{\Phi}$ and is defined via 

$$ \overline{\Phi}(t):= \sup_{s>0} st- \Phi(s). $$
We have the generalized H\"{o}lder inequality 

$$\frac{1}{\mu(Q)}\int_{Q} |fg| \, d \mu \leq 2 \|f\|_{\Phi, Q} \|g\|_{\overline{\Phi},Q},$$

where
$$ \|f\|_{\Phi, Q}:= \inf\left\{ \lambda>0: \frac{1}{\mu(Q)}\int_{Q} \Phi\left(\frac{|f(x)|}{\lambda}\right) \, d \mu(x) \leq 1  \right \}$$
is the usual local Orlicz norm on the cube $Q$.

If $1<p<\infty$, we say that $\Phi$ satisfies the $B_p$ condition if

$$\int_{1}^{\infty} \frac{\Phi(t)}{t^p} \frac{dt}{t}<\infty.$$
Note this integral will remain finite if we replace the lower limit $1$ by any $c>0.$ This condition will be relevant in maximal function bounds and two weight bump estimates.
Define the dyadic (scalar) Orlicz maximal operator

$$M_{\Phi}^{\D}f(x):= \sup_{Q \in \D} \|f\|_{\Phi, Q} \1_{Q}(x),$$

We need a technical result concerning this Orlicz maximal function in order to prove the two weight $L^p$ estimates with bump conditions on the weight.
The following lemma is due to Perez \cite{Perez1995} in the homogeneous case, but for the dyadic variant of the Orlicz maximal function it actually works for any Borel measure $\mu.$

\begin{lemma} \label{MaxOrliczBound}
For each $1<p<\infty$ and $\Phi \in B_p$, there exists $C_{p,\Phi}>0$ so that

$$ \int_{\R^n} |M_{\Phi}^{\D} f|^p \, d\mu \leq C_{p, \Phi} \int_{\R^n}|f|^p \, d\mu, \quad f \in L^p(\mu).$$
That is, $M_{\Phi}^{\D}$ is bounded on $L^p(\mu)$.

\begin{proof}
We can assume by density $f$ is bounded and compactly supported. Let $c>0$ be large enough so that $\Phi(c^{-1}) \leq \frac{1}{2}.$ We claim the distributional inequality 

$$ \mu(\{ |M_{\Phi}^{\D}f|> \lambda \}) \leq 2 \int_{\{|f|>\frac{\lambda}{c}\}} \Phi\left( \frac{|f|}{\lambda} \right) \, d\mu.$$

To see this is true, write  $\{ |M_{\Phi}^{\D}f|> \lambda \}$ (which we can assume has finite measure by assumptions on $\mu$ and the compact support of $f$) as a union of maximal, disjoint dyadic cubes $Q$ satisfying $\|f\|_{\Phi, Q}>\lambda$, which easily gives
\begin{align*}
 \mu(\{ |M_{\Phi}^{\D}f|> \lambda \}) & \leq \int_{\{ |M_{\Phi}^{\D}f|> \lambda \}} \Phi\left( \frac{|f|}{\lambda} \right) \, d \mu \\
 & \leq  \int_{\{|f|>\frac{\lambda}{c}\}} \Phi\left( \frac{|f|}{\lambda} \right) \, d\mu + \int_{\substack{\{ |M_{\Phi}^{\D}f|> \lambda \} \cap \\ \{|f|\leq \frac{\lambda}{c}\} }} \Phi\left( \frac{|f|}{\lambda} \right) \, d \mu \\
 & \leq \int_{\{|f|>\frac{\lambda}{c}\}} \Phi\left( \frac{|f|}{\lambda} \right) \, d\mu + \int_{\{|M_{\Phi}^{\D}f|> \lambda \}} \Phi\left( \frac{1}{c} \right) \, d \mu \\
 & \leq \int_{\{|f|>\frac{\lambda}{c}\}} \Phi\left( \frac{|f|}{\lambda} \right) \, d\mu + \frac{1}{2} \mu(\{ |M_{\Phi}^{\D}f|> \lambda \}),
\end{align*}
which proves the claim after rearrangement.

Using this claim, we then estimate via the change of variable $t=\frac{|f(x)|}{\lambda}$:

\begin{align*}
\|M_{\Phi}^{\D}\|_{L^p(\mu)}^p & = p \int_{0}^{\infty} \lambda^{p-1}  \mu(\{ |M_{\Phi}^{\D}f|> \lambda \}) \, d \lambda   \\
& \leq 2p \int_{0}^{\infty} \lambda^{p-1}\int_{\{|f|>\frac{\lambda}{c}\}} \Phi\left( \frac{|f(x)|}{\lambda} \right) \, d\mu(x) \, d \lambda\\
& = 2p \int_{\R^n} \int_{0}^{c |f(x)|} \lambda^{p-1} \Phi\left( \frac{|f(x)|}{\lambda} \right)    \, d \lambda \, d \mu(x) \\
& = 2p \int_{\R^n} \int_{1/c}^{\infty} \left( \frac{|f(x)|}{t} \right)^{p-1} \Phi(t) \left( \frac{|f(x)|}{t} \right)     \, \frac{d t}{t} \, d \mu(x)\\
& = 2p \left(\int_{\R^n} |f(x)|^p \, d \mu(x)\right) \cdot \left(\int_{1/c}^{\infty} \frac{\Phi(t)}{t^p} \frac{dt}{t}\right)\\
& = C_{p,\Phi} \int_{\R^n} |f(x)|^p \, d \mu(x),
\end{align*}
where we used the $B_p$ condition in the last step. 
\end{proof}
\end{lemma}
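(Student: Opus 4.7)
The plan is to follow the classical Pérez strategy \cite{Perez1995}, proving a suitable distributional inequality for $M_\Phi^\D$ and then integrating via layer cake. The key observation that makes the argument work for arbitrary Borel measures $\mu$ (and not just doubling ones) is that we are dealing with the \emph{dyadic} Orlicz maximal operator, so no covering lemma is needed: the level sets decompose naturally into maximal dyadic cubes.

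First, I would reduce to $f$ bounded with compact support by a standard density argument, which ensures that $\mu(\{M_\Phi^\D f > \lambda\}) < \infty$ for every $\lambda > 0$. Then, fix $c > 0$ (depending only on $\Phi$) such that $\Phi(1/c) \leq 1/2$; this is possible since $\Phi(0) = 0$ and $\Phi$ is continuous. The core claim to establish is the distributional bound
\[
\mu(\{M_\Phi^\D f > \lambda\}) \leq 2 \int_{\{|f| > \lambda/c\}} \Phi\!\left(\frac{|f|}{\lambda}\right) d\mu.
\]
To prove this, I would decompose the level set $\{M_\Phi^\D f > \lambda\}$ as a disjoint union of maximal dyadic cubes $Q$ with $\|f\|_{\Phi, Q} > \lambda$. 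Maximality together with the definition of the Luxemburg norm yields $\mu(Q) \leq \int_Q \Phi(|f|/\lambda)\,d\mu$ for each such $Q$. Splitting this integral along $\{|f| \leq \lambda/c\}$ and $\{|f| > \lambda/c\}$, the first piece is bounded by $\Phi(1/c)\mu(Q) \leq \tfrac{1}{2}\mu(Q)$, which can be absorbed.

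Next, I would insert this distributional estimate into the layer-cake formula
\[
\|M_\Phi^\D f\|_{L^p(\mu)}^p = p \int_0^\infty \lambda^{p-1} \mu(\{M_\Phi^\D f > \lambda\}) \, d\lambda,
\]
swap the order of integration via Fubini, and perform the substitution $t = |f(x)|/\lambda$ (so $d\lambda / \lambda = -dt/t$), which separates the integral into the product
\[
2p \left( \int_{\R^n} |f(x)|^p \, d\mu(x) \right) \cdot \left( \int_{1/c}^\infty \frac{\Phi(t)}{t^p} \, \frac{dt}{t} \right).
\]
The second factor is finite precisely by the $B_p$ assumption on $\Phi$, giving the desired constant $C_{p,\Phi}$.

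The only genuinely delicate point is the decomposition of the level set, since we have not assumed $\mu$ is doubling or atomless. However, the cubes involved are dyadic and the Luxemburg norm $\|f\|_{\Phi, Q}$ is well-defined on each $Q$ with $0 < \mu(Q) < \infty$; the maximal cubes exist because $\|f\|_{\Phi, Q} \to 0$ as $Q$ increases to a quadrant (using that $f$ is bounded with compact support and each quadrant has infinite measure). Everything else is bookkeeping, and the $B_p$ condition enters only at the very end in a purely one-dimensional integral. I do not anticipate any serious obstacle beyond being careful with the measurability and finiteness of the level sets in the general Radon measure setting.
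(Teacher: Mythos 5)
Your proposal is correct and follows essentially the same argument as the paper: reduce to bounded compactly supported $f$, choose $c$ with $\Phi(1/c)\leq 1/2$, establish the distributional inequality by decomposing the level set into maximal dyadic cubes (using that $\|f\|_{\Phi,Q}>\lambda$ implies $\mu(Q)\leq\int_Q\Phi(|f|/\lambda)\,d\mu$, then splitting and absorbing the small-$|f|$ part), and finally layer-cake, Fubini, and the substitution $t=|f(x)|/\lambda$ to produce the $B_p$ integral. The only cosmetic difference is that you perform the absorption cube-by-cube whereas the paper absorbs once at the level of the full set $\{M_\Phi^\D f>\lambda\}$; you also make explicit the observation that the maximal cubes exist because $\|f\|_{\Phi,Q}\to 0$ along increasing cubes (since quadrants have infinite $\mu$-measure), a point the paper treats as implicit.
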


Fix $1<p<\infty$; we will sometimes omit the dependence on $p$ in what follows to simplify the notation. Let $W, V$ be $d \times d$ matrix weights. Given two Young functions $\Phi, \Psi$, we write 

$$[W,V]_{\Phi, \Psi}^b:= \sup_{\substack{I,J \in \mc{D} \\ 0 \leq \text{dist}_{\mc{D}}(I, J) \leq N + 2}}[c_p^b(I, J)] \| \| V^{1/p}(x) W^{-1/p}(y) \|_{\Phi_x, I}  \|_{\Psi_y, J},^p $$
where we define the (scalar) Orlicz average of a matrix weight $W$ on a dyadic cube $I$  

$$\|W(\cdot)\|_{\Phi, I}= \left\| \bv{ W(\cdot)} \right \| _{\Phi, I}$$
and recall
$$c_p^b(I,J)= \frac{m(I)^{p/2} m(J)^{p/2}}{\mu(I)^{p-1} \mu(J)}$$

Recall that for each pair of dyadic cubes $I,J$, there exist reducing operators ($d \times d$ matrices) $\mathcal{V}_{\Phi,I}$ and $\mathcal{W}_{\Psi, J}$ satisfying for each vector $e \in \R^d.$

$$ |\mathcal{V}_{\Phi, I} e| \sim \|V^{1/p} e\|_{\Phi, I}  , \quad  |\mathcal{W}_{\Psi, J} e | \sim \| W^{-1/p}(\cdot) e \|_{\Psi, J} $$
As before, we have an equivalent quantity for the two weight characteristic:

$$ [W,V]_{\Phi, \Psi}^b \sim \sup_{\substack{I,J \in \mc{D} \\ 0 \leq \text{dist}_{\mc{D}}(I, J) \leq N + 2}}[c_p^b(I, J)] \bv {\mathcal{V}_{\Phi, I} \mathcal{W}_{\Psi, J}}^p.$$

We also define the weighted maximal operators

$$M_{\Psi, W}^{\D}f(x):= \sup_{I \in \D} \left \langle  \left | \mathcal{W}_{\Psi, I}^{-1} W^{-1/p}(\cdot) f(\cdot) \right | \right \rangle_I \1_{I}(x), \quad M_{\Phi, V}^{\D}f(x):= \sup_{I \in \D} \left \langle  \left | \mathcal{V}_{\Phi, I}^{-1} V^{1/p}(\cdot) f(\cdot) \right | \right \rangle_I \1_{I}(x) .$$
Notice that we have the following bound by the generalized H\"{o}lder inequality: 

\begin{align*}
\left \langle  \left |\mathcal{W}_{\Psi, I}^{-1} W^{-1/p}(\cdot) f(\cdot) \right | \right \rangle_I & \leq  \left \langle  \bv{\mathcal{W}_{\Psi, I}^{-1} W^{-1/p}(\cdot)} \left |f(\cdot) \right | \right \rangle_I \\
& \leq 2  \|\mathcal{W}_{\Psi, I}^{-1} W^{-1/p}(\cdot)\|_{\Psi, I} \|f\|_{\overline{\Psi},I} \\
& \lesssim \|f\|_{\overline{\Psi},I}
\end{align*}
by properties of the reducing operator, so we see that we can dominate this maximal function by the scalar variant:

\begin{equation}
 M_{\Psi, W}^{\D}f(x) \lesssim M_{\overline{\Psi}}^{\D}f(x),
\label{MaximalOrliczPointwise}
 \end{equation}

 and completely analogously, we have 

\begin{equation}
 M_{\Phi, V}^{\D}f(x) \lesssim M_{\overline{\Phi}}^{\D}f(x).
 \end{equation}

\begin{theorem}
Let $T$ be a vector Haar shift, and let $\Phi, 
 \Psi$ be Young functions such that $\overline{\Phi} \in B_{p'}$ and $\overline{\Psi} \in B_{p}.$  If $W,V$ satisfy the joint Orlicz bump condition $[W,V]_{\Phi, \Psi}^{b}<\infty$, then 

$$ \|Tf \|_{L^p(V)} \lesssim  ([W,V]_{\Phi, \Psi}^b)^{1/p} \|f\|_{L^p(W)}.$$

\begin{proof}

By standard change of variable and duality, it is enough to prove for arbitrary $f \in L^p(\mu)$ and $g \in L^{p'}(\mu)$

$$ |\langle V^{1/p}T W^{-1/p}f, g \rangle| \lesssim ([W,V]_{\Phi, \Psi}^b )^{1/p}\|f\|_{L^p(\mu)} \|g\|_{L^{p'}(\mu)}.$$
By Theorem \ref{thm: sparseforvectorhaar}, it is enough to estimate the sparse form $\mathcal{A}_{\mathcal{S}}^N(W^{-1/p}f, V^{1/p}g)$, because the estimate for $\mathcal{A}_{\mathcal{S}}(W^{-1/p}f, V^{1/p}g)$ is even simpler. Proceeding similarly to the proof of Corollary \ref{cor:WeightedHaarShift}, we have

\begin{small}
\begin{align*}
 &  \sum_{\substack{J,K \in \mc{S} \\ \text{dist}_{\mc{D}}(J, K) \leq N + 2 }}\dashint_{J}\dashint_{K}|W^{-1/p}(x)f(x) \cdot V^{1/p}(y) g(y)| \, d\mu(x)d\mu(y) \sqrt{m(J)}\sqrt{m(K)} \\
  & \lesssim ([W,V]_{\Phi, \Psi}^{b})^{1/p}\left( \sum_{K \in \mathcal{S}} \LL |\mathcal{W}_{\Psi, K}^{-1} W^{-\frac{1}{p}}f| \RR_K^p \mu(E_K) \right)^{\frac{1}{p}} \left( \sum_{J \in \mathcal{S}} \LL |\mathcal{V}_{\Phi, J}^{-1} V^{\frac{1}{p}}g| \RR_J^{p'} \mu(E_J)\right)^{\frac{1}{p'}} \\
 & \lesssim ([W,V]_{\Phi, \Psi}^{b})^{1/p} \|M_{\Psi, W}^{\D} f\|_{L^p(\mu)} \|M_{\Phi, V}^{\D}g\|_{L^{p'}(\mu)} \\
 & \lesssim ([W,V]_{\Phi, \Psi}^{b})^{1/p} \|M_{\overline{\Psi}}^{\D} f\|_{L^p(\mu)} \|M_{\overline{\Phi} }^{\D}g\|_{L^{p'}(\mu)} \\
 & \lesssim ([W,V]_{\Phi, \Psi}^{b})^{1/p} \|f\|_{L^p(\mu)} \|g\|_{L^{p'}(\mu)},
\end{align*}
\end{small}
where we used Lemma \ref{MaxOrliczBound} in the last step.

\end{proof}

\end{theorem}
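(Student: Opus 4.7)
The plan is to reduce the weighted norm estimate to a bilinear form estimate and then invoke the sparse domination from \cref{T:thm A 1st part}. By the change of variables $f \mapsto W^{-1/p}f$ and duality, it suffices to establish
\[|\langle V^{1/p}TW^{-1/p}f, g\rangle| \lesssim ([W,V]_{\Phi,\Psi}^b)^{1/p}\|f\|_{L^p(\mu)}\|g\|_{L^{p'}(\mu)}\]
for arbitrary $f \in L^p(\mu)$ and $g \in L^{p'}(\mu)$. The pointwise convex body sparse domination combined with \cref{lem:pointwise_to_form} reduces the matter to controlling the two sparse forms $\mathcal{A}_{\mathcal{S}}(W^{-1/p}f, V^{1/p}g)$ and $\mathcal{A}_{\mathcal{S}}^N(W^{-1/p}f, V^{1/p}g)$. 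The first is simpler (essentially the diagonal case $J=K$ of what follows), so the focus is on the second.

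For a pair $J, K \in \mathcal{S}$ with $\mathrm{dist}_{\mathcal{D}}(J, K) \leq N+2$, I would insert the Orlicz reducing operators $\mathcal{V}_{\Phi, J}$ and $\mathcal{W}_{\Psi, K}$ to factor
\[|W^{-1/p}(x)f(x) \cdot V^{1/p}(y)g(y)| \leq \bv{\mathcal{V}_{\Phi, J}\mathcal{W}_{\Psi, K}}\, |\mathcal{W}_{\Psi, K}^{-1}W^{-1/p}(x)f(x)|\, |\mathcal{V}_{\Phi, J}^{-1}V^{1/p}(y)g(y)|.\]
The key algebraic identity is $\sqrt{m(J)m(K)} = c_p^b(J,K)^{1/p}\mu(J)^{1/p'}\mu(K)^{1/p}$, so combining the weight $\sqrt{m(J)m(K)}$ attached to the pair $(J,K)$ in $\mathcal{A}_{\mathcal{S}}^N$ with the bump factor $\bv{\mathcal{V}_{\Phi, J}\mathcal{W}_{\Psi, K}}$ yields $c_p^b(J,K)^{1/p}\bv{\mathcal{V}_{\Phi, J}\mathcal{W}_{\Psi, K}}\mu(J)^{1/p'}\mu(K)^{1/p}$, which by definition of the two-weight Orlicz bump characteristic is controlled by $([W,V]_{\Phi,\Psi}^b)^{1/p}\mu(J)^{1/p'}\mu(K)^{1/p}$.

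After this factorization the modified sparse form is bounded by $([W,V]_{\Phi,\Psi}^b)^{1/p}$ times
\[\sum_{\substack{J,K \in \mathcal{S}\\ \mathrm{dist}(J,K) \leq N+2}}\langle|\mathcal{W}_{\Psi,K}^{-1}W^{-1/p}f|\rangle_K\, \langle|\mathcal{V}_{\Phi,J}^{-1}V^{1/p}g|\rangle_J \,\mu(J)^{1/p'}\mu(K)^{1/p}.\]
Since at most $O_N(1)$ cubes $J$ lie within distance $N+2$ of any given $K$, H\"older's inequality combined with the sparseness of $\mathcal{S}$ (using the disjoint major subsets $E_Q \subset Q$ with $\mu(E_Q) \gtrsim \mu(Q)$) bounds this by $\|M^{\mathcal{D}}_{\Psi, W}f\|_{L^p(\mu)}\|M^{\mathcal{D}}_{\Phi, V}g\|_{L^{p'}(\mu)}$. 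The pointwise estimates $M^{\mathcal{D}}_{\Psi, W}f \lesssim M^{\mathcal{D}}_{\overline{\Psi}}f$ and $M^{\mathcal{D}}_{\Phi, V}g \lesssim M^{\mathcal{D}}_{\overline{\Phi}}g$, obtained from the generalized H\"older inequality for Orlicz norms together with the defining property of the reducing operators, then allow a final application of \cref{MaxOrliczBound} under the $B_p$ and $B_{p'}$ hypotheses. The main obstacle is the careful bookkeeping of the powers of $m(J), m(K), \mu(J), \mu(K)$ to verify that the modified sparse form's scaling matches exactly the combinatorial weight $c_p^b(J,K)$ built into the bump characteristic; once this algebraic matching is settled, the remainder proceeds along the classical P\'erez Orlicz bump paradigm.
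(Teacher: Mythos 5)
Your proposal follows the same approach as the paper: reduce by change of variables and duality to a bilinear form estimate, dominate by the two sparse forms via \cref{thm: sparseforvectorhaar} and \cref{lem:pointwise_to_form}, insert the Orlicz reducing operators, use the algebraic identity $\sqrt{m(J)m(K)} = c_p^b(J,K)^{1/p}\mu(J)^{1/p'}\mu(K)^{1/p}$ to absorb the combinatorial weight into the bump characteristic, and then control the resulting sums by the weighted Orlicz maximal operators, whose $L^p$ boundedness under the $B_p$ and $B_{p'}$ conditions follows from the pointwise bounds $M_{\Psi,W}^{\D} \lesssim M_{\overline{\Psi}}^{\D}$, $M_{\Phi,V}^{\D} \lesssim M_{\overline{\Phi}}^{\D}$ and \cref{MaxOrliczBound}. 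The bookkeeping of exponents and the sparseness argument (finitely many pairs at bounded dyadic distance, passing to the disjoint major subsets $E_Q$) are all correctly anticipated, so the proposal is essentially the paper's argument.
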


\bibliographystyle{alpha}
\bibliography{sources}

\begin{thebibliography}{CAPW24}

\bibitem[BPW16]{BickelPetermichlWick}
Kelly Bickel, Stefanie Petermichl, and Brett~D. Wick.
\newblock Bounds for the {H}ilbert transform with matrix {$A_2$} weights.
\newblock {\em J. Funct. Anal.}, 270(5):1719--1743, 2016.

\bibitem[CAP19]{Conde-AlonsoParcet}
Jos\'e{}~M. Conde-Alonso and Javier Parcet.
\newblock Nondoubling {C}alder\'on-{Z}ygmund theory: a dyadic approach.
\newblock {\em J. Fourier Anal. Appl.}, 25(4):1267--1292, 2019.

\bibitem[CAPW24]{CPW}
Jose Conde-Alonso, Jill Pipher, and Nathan Wagner.
\newblock Balanced measures, sparse domination and complexity-dependent weight
  classes.
\newblock {\em Mathematische Annalen}, 391:2209--2253, 2024.

\bibitem[CG01]{ChristGoldberg}
Michael Christ and Michael Goldberg.
\newblock Vector {$A_2$} weights and a {H}ardy-{L}ittlewood maximal function.
\newblock {\em Trans. Amer. Math. Soc.}, 353(5):1995--2002, 2001.

\bibitem[CT19]{CuliucTreil}
Amalia Culiuc and Sergei Treil.
\newblock The {C}arleson embedding theorem with matrix weights.
\newblock {\em Int. Math. Res. Not. IMRN}, (11):3301--3312, 2019.

\bibitem[CUIM18]{cruzuribe2017}
David Cruz-Uribe, Joshua Isralowitz, and Kabe Moen.
\newblock Two weight bump conditions for matrix weights.
\newblock {\em Integral Equations Operator Theory}, 90(3):Paper No. 36, 31,
  2018.

\bibitem[dlHH18]{HH}
Ana~Grau de~la Herr\'an and Tuomas Hyt\"onen.
\newblock Dyadic representation and boundedness of nonhomogeneous
  {C}alder\'on-{Z}ygmund operators with mild kernel regularity.
\newblock {\em Michigan Math. J.}, 67(4):757--786, 2018.

\bibitem[DPTV24]{DPTV}
Komla Domelevo, Stefanie Petermichl, Sergei Treil, and Alexander Volberg.
\newblock The matrix {$A_2$} conjecture fails, i.e. $3/2>1$, 2024.

\bibitem[Gol03]{Goldberg2003}
Michael Goldberg.
\newblock Matrix $a_p$ weights via maximal functions.
\newblock {\em Pac. J. Mat.}, 211(2):201--220, 2003.

\bibitem[H\"18]{Hanninen}
Timo~S. H\"anninen.
\newblock Equivalence of sparse and {C}arleson coefficients for general sets.
\newblock {\em Ark. Mat.}, 56(2):333--339, 2018.

\bibitem[HPTV14]{HPTV}
Tuomas Hyt\"onen, Carlos P\'erez, Sergei Treil, and Alexander Volberg.
\newblock Sharp weighted estimates for dyadic shifts and the {$A_2$}
  conjecture.
\newblock {\em J. Reine Angew. Math.}, 687:43--86, 2014.

\bibitem[Hyt12a]{hytonen2012}
Tuomas~P. Hytonen.
\newblock The a2 theorem: Remarks and complements.
\newblock 2012.

\bibitem[Hyt12b]{HytonenAnnals}
Tuomas~P. Hyt\"onen.
\newblock The sharp weighted bound for general {C}alder\'on-{Z}ygmund
  operators.
\newblock {\em Ann. of Math. (2)}, 175(3):1473--1506, 2012.

\bibitem[Lac17]{Lacey_2017}
Michael~T. Lacey.
\newblock An elementary proof of the {$A_2$} bound.
\newblock {\em Israel Journal of Mathematics}, 217(1):181–195, March 2017.

\bibitem[Ler13]{Lerner2013}
Andrei~K. Lerner.
\newblock A simple proof of the {$A_2$} conjecture.
\newblock {\em Int. Math. Res. Not. IMRN}, (14):3159--3170, 2013.

\bibitem[LSMP14]{LSMP}
Luis~Daniel L{\'o}pez-S{\'a}nchez, Jos{\'e}~Mar{\'\i}a Martell, and Javier
  Parcet.
\newblock Dyadic harmonic analysis beyond doubling measures.
\newblock {\em Advances in Mathematics}, 267:44--93, 2014.

\bibitem[Moe12]{Moen}
Kabe Moen.
\newblock Sharp weighted bounds without testing or extrapolation.
\newblock {\em Arch. Math. (Basel)}, 99(5):457--466, 2012.

\bibitem[NPTV17]{NPTV}
Fedor Nazarov, Stefanie Petermichl, Sergei Treil, and Alexander Volberg.
\newblock Convex body domination and weighted estimates with matrix weights.
\newblock {\em Adv. Math.}, 318:279--306, 2017.

\bibitem[NTV97]{NTV97}
F.~Nazarov, S.~Treil, and A.~Volberg.
\newblock Cauchy integral and {C}alder\'on-{Z}ygmund operators on
  nonhomogeneous spaces.
\newblock {\em Internat. Math. Res. Notices}, (15):703--726, 1997.

\bibitem[NTV98]{NTV98}
F.~Nazarov, S.~Treil, and A.~Volberg.
\newblock Weak type estimates and {C}otlar inequalities for
  {C}alder\'on-{Z}ygmund operators on nonhomogeneous spaces.
\newblock {\em Internat. Math. Res. Notices}, (9):463--487, 1998.

\bibitem[NTV03]{NTV2003}
F.~Nazarov, S.~Treil, and A.~Volberg.
\newblock The {$Tb$}-theorem on non-homogeneous spaces.
\newblock {\em Acta Math.}, 190(2):151--239, 2003.

\bibitem[P\'95]{Perez1995}
C.~P\'erez.
\newblock On sufficient conditions for the boundedness of the
  {H}ardy-{L}ittlewood maximal operator between weighted {$L^p$}-spaces with
  different weights.
\newblock {\em Proc. London Math. Soc. (3)}, 71(1):135--157, 1995.

\bibitem[Pet07]{Petermichl}
S.~Petermichl.
\newblock The sharp bound for the {H}ilbert transform on weighted {L}ebesgue
  spaces in terms of the classical {$A_p$} characteristic.
\newblock {\em Amer. J. Math.}, 129(5):1355--1375, 2007.

\bibitem[PV02]{PetermichlVolberg}
Stefanie Petermichl and Alexander Volberg.
\newblock Heating of the {A}hlfors-{B}eurling operator: weakly quasiregular
  maps on the plane are quasiregular.
\newblock {\em Duke Math. J.}, 112(2):281--305, 2002.

\bibitem[Tol01a]{TolsaRBMO}
Xavier Tolsa.
\newblock B{MO}, {$H^1$}, and {C}alder\'on-{Z}ygmund operators for non doubling
  measures.
\newblock {\em Math. Ann.}, 319(1):89--149, 2001.

\bibitem[Tol01b]{TolsaT1}
Xavier Tolsa.
\newblock Littlewood-{P}aley theory and the {$T(1)$} theorem with non-doubling
  measures.
\newblock {\em Adv. Math.}, 164(1):57--116, 2001.

\bibitem[Tol07]{Tolsa2007}
Xavier Tolsa.
\newblock Weighted norm inequalities for {C}alder\'on-{Z}ygmund operators
  without doubling conditions.
\newblock {\em Publ. Mat.}, 51(2):397--456, 2007.

\bibitem[Tre23]{Treil2023}
Sergei Treil.
\newblock Mixed {$A_2$}-{$A_\infty$} estimates of the non-homogeneous vector
  square function with matrix weights.
\newblock {\em Proc. Amer. Math. Soc.}, 151(8):3381--3389, 2023.

\bibitem[TTV15]{ThieleTreilVolberg}
Christoph Thiele, Sergei Treil, and Alexander Volberg.
\newblock Weighted martingale multipliers in the non-homogeneous setting and
  outer measure spaces.
\newblock {\em Adv. Math.}, 285:1155--1188, 2015.

\bibitem[TV97]{TreilVolberg}
S.~Treil and A.~Volberg.
\newblock Wavelets and the angle between past and future.
\newblock {\em J. Funct. Anal.}, 143(2):269--308, 1997.

\bibitem[Vol97]{Volberg97}
A.~Volberg.
\newblock Matrix {$A_p$} weights via {$S$}-functions.
\newblock {\em J. Amer. Math. Soc.}, 10(2):445--466, 1997.

\bibitem[VV20]{Vasyunin_Volberg_2020}
Vasily Vasyunin and Alexander Volberg.
\newblock {\em The Bellman Function Technique in Harmonic Analysis}.
\newblock Cambridge Studies in Advanced Mathematics. Cambridge University
  Press, 2020.

\bibitem[VZK18]{VolbergZorin-Kranich}
Alexander Volberg and Pavel Zorin-Kranich.
\newblock Sparse domination on non-homogeneous spaces with an application to
  {$A_p$} weights.
\newblock {\em Rev. Mat. Iberoam.}, 34(3):1401--1414, 2018.

\end{thebibliography}

\end{document}